\newtheorem{theorem}{Theorem}[section]
\newtheorem{definition}{Definition}[section]
\newtheorem{lemma}[theorem]{Lemma}
\newtheorem{corollary}[theorem]{Corollary}
\begin{document}

\title{On the long--time behavior of a perturbed conservative system with degeneracy.}

\author{Wenqing Hu
\thanks{Department of Mathematics and Statistics, Missouri University of Science and Technology
(formerly University of Missouri, Rolla). Email: \texttt{huwen@mst.edu}} \ .}

\date{}

\maketitle

\begin{abstract}
We consider in this work a model conservative system subject to dissipation and Gaussian--type stochastic perturbations.
The original conservative system possesses a continuous set of steady states, and is thus degenerate.
We characterize the long--time limit of our model system as the perturbation parameter tends to zero.
The degeneracy in our model system carries features found in some partial differential equations related,
for example, to turbulence problems.
\end{abstract}

\textit{Keywords}: Random perturbations of dynamical system, group symmetry, invariant measure, nonlinear dynamics, irreversibility.

\textit{2010 Mathematics Subject Classification Numbers}: 37L40, 60H10, 76F20.

\section{Introduction.}\label{Sec:Intro}

Many Hamiltonian systems that arise in mechanics, mechanical engineering, as well
as hydrodynamics are subject to group symmetry. As an example,
in the study of the motion of an ideal incompressible fluid, V.I.Arnold had proposed
(see \cite{[Arnold1966]}, \cite{[Arnold]}, \cite{[Arnold-Khesin]}, \cite{[TaoBlog]}) a beautiful picture that describes the
dynamics of ideal incompressible fluid as geodesic flows on the group of all diffeomorphisms of a certain domain
(see also the author's related work \cite{[Hu-Sverak]} in this direction).
The studies of random perturbations of Hamiltonian systems, or general dynamical systems with symmetry,
in particular the long--time dynamics and problems about invariant measures of these systems are of interest
(see also the author's related work \cite{[Hu-Metastability-Nearly-Elastic]}, \cite{[Freidlin-Hu-NearlyElastic]}, \cite{[Freidlin-HuLandau-Lifschitz]}).
Schematically, the general problem can be formulated as follows. We are given a dynamical
system
\begin{equation}\label{Intro:Eq:DynamicalSystem}
\dot{x}=b(x)
\end{equation}
in an ambient space $x\in M$
($M$ can be a Riemanian manifold). Usually we assume $b(x)$ preserves the energy. Then we assume
that for some group $G$ the
system \eqref{Intro:Eq:DynamicalSystem} has some symmetry with respect to
$G$. The last sentence about symmetry of the system
\eqref{Intro:Eq:DynamicalSystem} with respect to the group $G$ is a bit vague
and could be understood in many different ways. It can be understood
in a strict way so that the group can act on the space $M$ (in
particular, it is such case when $G=M$) and the dynamics of
\eqref{Intro:Eq:DynamicalSystem} is invariant with respect to $G$--action. It can
also be understood as a more ``rough" symmetry, in the sense for
example that the stable attractors of \eqref{Intro:Eq:DynamicalSystem} has
equivalent dynamical properties under $G$--action (in
\cite{[Freidlin2014JSPpaper]} such dynamical property is in the sense
of equivalence of logarithmic asymptotics of transition
probabilities when we add a small noise to \eqref{Intro:Eq:DynamicalSystem},
this is related to the notion of ``quasi--potential", see \cite{[FWbook]}, \cite{[FWbook2012]}).
Our goal is to describe
the effect of adding a small noise to \eqref{Intro:Eq:DynamicalSystem}. That
is, we study systems of type
\begin{equation}\label{Intro:Eq:DynamicalSystemNoise}
\dot{\mathcal{X}}^\varepsilon=b(\mathcal{X}^\varepsilon)+\xi^{\varepsilon}
\end{equation}
where $\xi^\varepsilon$ is a deterministic and/or stochastic
perturbation depending on
the small parameter(s) $\varepsilon=(\varepsilon_1,...,\varepsilon_k)$. Recent progresses in this direction have shown
that an effective description of the long--time behavior of
\eqref{Intro:Eq:DynamicalSystemNoise} is the motion on the cone of invariant
measures of the unperturbed system \eqref{Intro:Eq:DynamicalSystem}
(see \cite{[Freidlin2014JSPpaper]}). Several examples of such description are recently demonstrated in \cite{[Freidlin2014JSPpaper]},
\cite{[FKWtrap]}, \cite{[FKraretransition]}, \cite{[FKSlowlyChangingDynamics]}, \cite{[Freidlin-Koralov-WentzellPockets]}.

The above paradigm is only a general scheme. In this work we are interested in studying a model problem that falls
under the above general paradigm. Let us consider the following system
(see \cite{[BT]}, \cite[Section 4.4]{[BerglundKramers]}) corresponding to \eqref{Intro:Eq:DynamicalSystem}:

\begin{equation}\label{Intro:Eq:ABmodel}
\left\{\begin{array}{l} dx_t=-x_ty_tdt \ ,
\\
dy_t=x_t^2dt \ .
\end{array}\right.
\end{equation}

A phase picture of system (\ref{Intro:Eq:ABmodel}) can be seen in Figure \ref{Fig:ABModel}(a). We see
that the whole line $Oy_A$ contains stable equilibriums and the
whole line $Oy_B$ contains unstable equilibriums. This is different
from the cases considered in \cite{[FWbook]}, \cite{[FW1969]}. In
this case we can understand the symmetry of (\ref{Intro:Eq:ABmodel}) in a
more rough way: the stable and unstable equilibriums are symmetric
with respect to shifts in the directions of $Oy_A$ and $Oy_B$,
respectively. The unperturbed system (\ref{Intro:Eq:ABmodel}) preserves the
energy $E(x,y)=x^2+y^2$. The driving vector field $b(x,y)=(-xy,x^2)$ is degenerate on $x=0$.
Let us add a perturbation to system \eqref{Intro:Eq:ABmodel}
that consists of a deterministic friction and a random noise:

\begin{figure}
\centering
\includegraphics[height=7cm, width=11cm, bb=0 0 370 245]{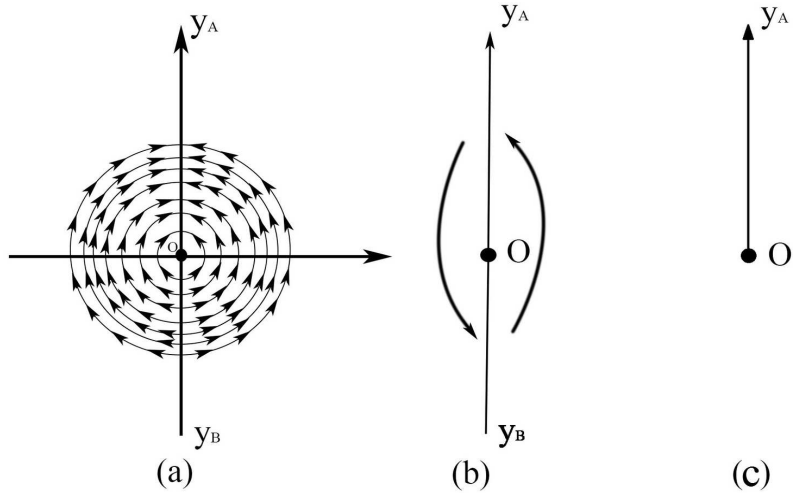}
\caption{The $AB$ model.}
\label{Fig:ABModel}
\end{figure}

\begin{equation}\label{Intro:Eq:ABmodelPerturbed}
\left\{\begin{array}{l} d\mathcal{X}^{\varepsilon}_t=-\mathcal{X}^{\varepsilon}_t \mathcal{Y}^{\varepsilon}_tdt-\varepsilon \mathcal{X}^{\varepsilon}_tdt+\sqrt{\varepsilon}dW^1_t \ , \ \mathcal{X}^\varepsilon_0=x_0 \ ,
\\
d\mathcal{Y}^{\varepsilon}_t=(\mathcal{X}^{\varepsilon}_t)^2dt-\varepsilon \mathcal{Y}^{\varepsilon}_tdt+\sqrt{\varepsilon}dW^2_t \ , \ \mathcal{Y}^\varepsilon_0=y_0 \ .
\end{array}\right.
\end{equation}

Here $W^1_t$ and $W^2_t$ are two independent standard $1$--dimensional Brownian motions; the small parameter $\varepsilon>0$
is the intensity of the friction, and the small parameter $\sqrt{\varepsilon}>0$ represents the intensity of the noise.
System \eqref{Intro:Eq:ABmodelPerturbed} is a two--dimensional nonlinear stochastic equation involving a
non--potential force. It is this non--potential force that has
the essential effect of creating a line of stable
fixed points (attracting line $Oy_A$) touching a line of unstable fixed points (repelling
line $Oy_B$). In the subsequent text we sometimes refer to this model as the $AB$--model.

Our goal in this paper is to study the long--time behavior of system \eqref{Intro:Eq:ABmodelPerturbed}
as $\varepsilon \downarrow 0$. By further developing results in \cite{[Freidlin2014JSPpaper]},
\cite{[FKWtrap]}, \cite{[FKraretransition]}, \cite{[FKSlowlyChangingDynamics]}, \cite{[Freidlin-Koralov-WentzellPockets]},
we will characterize the limiting process as a diffusion process $Y_t$ on the positive--$y$ semi--axis.
The limiting diffusion process $Y_t$ behaves as a $2$--dimensional radial Bessel process with linear damping,
and henceforce we call it a damped $2$--d radial Bessel process,
abbreviated as \textit{damped--BES(2)} (for Bessel process in arbitrary dimension
see \cite[Chapter XI, \S 1]{[Revuz-Yor]}).
The origin $O$ is an inaccessible point for damped--BES(2).
Diffusion processes on singular $1$--dimensional manifolds as the limit of averaging procedure
has been considered in \cite{[FW-DiffusionGraph]}, \cite{[FW-AMS]}, among many other literature.
The major contribution in our work is that we consider the manifold of unstable equilibria
touching the manifold of stable equilibria. This results in non--trivial analysis that leads to our limiting
process $Y_t$ as well as the inaccessibility of the origin $O$.
We will describe the limiting Markov diffusion process $Y_t$ via its infinitesimal generator,
and we show the weak convergence by making use of tightness and the classical martingale problem method.

In a certain sense, our model problem here differs from the set--up in the classical Freidlin--Wentzell theory
(see \cite{[FW1969]}) in that the point--like asymptotically stable attractor is replaced by a manifold.
We can view our limiting process $Y_t$ on $Oy_A$, the damped--BES(2) process,
as a ``process--level attractor" of our system. For $\varepsilon>0$, the dynamics of the system
as $\varepsilon\downarrow 0$ corresponds to the ``metastable" behavior (see \cite{[FreidlinSublimitingDistribution]}).
We will show that under this scenario the ``metastable" behavior of the system is characterized by
jumps between points on $Oy_A$ and $Oy_B$.

We are motivated by finite dimensional models for the inviscid stochastic 2--d Navier--Stokes equations written
in vorticity form
(see \cite{[Kuksin-Shirikyan2017Review]}, \cite[Lecture 39]{[SverakNotes2011-2012]})
\begin{equation}\label{Intro:Eq:InviscidStochasticNSE}
\dfrac{\partial \omega}{\partial t}+(u\cdot \nabla) \omega-\nu \Delta \omega=\sqrt{\nu}\eta(t,x) \ , \ u=\mathcal{K} \omega \ , \ \omega(0,x)=\omega_0(x) \ ,
\end{equation}
in which $\mathcal{K}=\nabla^\perp \Delta^{-1}$ is the Biot--Savart operator, $\eta(t,x)$ is a noise, and the viscosity parameter $\nu \rightarrow 0$.
An unsolved issue here targets at studying the vanishing noise
limit of stationary measures of the 2--d stochastic Navier--Stokes system
(see open problem 3 in the last section of the survey \cite{[Kuksin-Shirikyan2017Review]}).
The difficulty there is that one has to put a rather restrictive hypothesis, namely the unperturbed dynamics has to be
globally asymptotically stable. To remove this restriction, in the finite dimensional case this problem
is rather well--understood, and one can establish the so--called Freidlin--Wentzell asymptotics for stationary measures
(see Section 6.4 in \cite{[FWbook2012]}). As for stochastic PDEs, similar results can be proved, provided that
the global attractor for the unperturbed dynamics has a ``regular structure". The latter means that the attractor consists
 of finitely many steady--states and the heteroclinic orbits joining them. A result in this direction has been proved in
 \cite{[Martiosyan2017CPAM]} for the case of a damped nonlinear wave equation.
 However, the global attractor for the $2$--d Euler system does not have a regular structure, and in fact it
has continuous sets of steady states (see \cite[Lecture 68]{[SverakNotes2011-2012]}).
More generally, systems that arise in hydrodynamics, such as in the context of Euler's equation, typically possess
equilibrium points that belong to an infinite dimensional ``manifold" of other equilibria. These has been found in
experiments (see \cite{[2dTurbulence-Experiments-Sommeria]}, \cite{[2dTurbulence-Experiments-Tabling]}),
in numerical simulations (see \cite{[2dTurbulence-Numerical-Farge]}), explained
using arguments based on statistical mechanics
(see \cite{[2dTurbulence-StatMech-Venaille]}, \cite{[2dTurbulence-StatMech-Robert]}, \cite{[2dTurbulence-StatMech-Miller]},
\cite{[2dTurbulence-StatMech-Bouchet]}),
as well as explained theoretically (see \cite{[2dTurbulence-Theory-Morita]}, \cite{[2dTurbulence-Theory-MouhotVillani]}).
Our system \eqref{Intro:Eq:ABmodel} is a very simple finite--dimensional example of such type, in which the attractor
is a semi--line $Oy_A$. When we add a damping to \eqref{Intro:Eq:ABmodel}, we obtain for fixed $\varepsilon>0$
the model system \eqref{Intro:Eq:ABmodelPerturbed} without the stochastic noise, which admits only one single attractor $O$.
Of course, the situation will be much more complicated for the Euler and the Navier--Stokes equations.
For example, in low dimensions a good example is the famous Lorenz attractor (see \cite{[LorentzAttractor]}).
However, a surprising geometric connection is that our system \eqref{Intro:Eq:ABmodel}
can be viewed as the Euler--Arnold equation (see \cite{[TaoBlog]}, \cite[Appendix 2]{[Arnold]})
for the group of all affine transformations of a line $\ell$ (see
\cite{[Molchanov]} for more on this group), while the $2$--d Euler equation is the
Euler--Arnold equation for the group of all diffeomorphisms transforming the domain in which the fluid is moving
(see \cite{[Arnold1966]} and \cite[Appendix 2]{[Arnold]}). The formulation of our system
\eqref{Intro:Eq:ABmodel} as the Euler--Arnold equation will be discussed in Section \ref{Sec:Euler-Arnold}.

The paper is organized as follows. In Section \ref{Sec:Heuristic} we will explain the heuristics of the limiting mechanism.
In Section \ref{Sec:LimitingProcess} we demonstrate
the main convergence theorem as well as its proof. In Section \ref{Sec:AuxiliaryLemmas} we prove auxiliary lemmas that are needed in Section \ref{Sec:LimitingProcess}.
In Section \ref{Sec:Metastable} we describe the dynamics of our model system for small but nonzero $\varepsilon>0$.
In Section \ref{Sec:Euler-Arnold} we discuss the formulation of our system
\eqref{Intro:Eq:ABmodel} as the Euler--Arnold equation for the group of all affine transformations of a line.
Some remarks and generalizations are provided in Section \ref{Sec:Remark}.

\section{Heuristic description of the limiting mechanism.}\label{Sec:Heuristic}

To describe the limiting motion as $\varepsilon\downarrow 0$, we can first do a time rescaling $t\rightarrow \dfrac{t}{\varepsilon}$.
Let $(X^\varepsilon_t, Y^\varepsilon_t)=(\mathcal{X}^\varepsilon_{t/\varepsilon}, \mathcal{Y}^\varepsilon_{t/\varepsilon})$. Then we have

\begin{equation}\label{Eq:ABmodelPerturbedTimeRescaled}
\left\{\begin{array}{l}
dX^{\varepsilon}_t=-\dfrac{1}{\varepsilon}X^{\varepsilon}_t Y^{\varepsilon}_tdt- X^{\varepsilon}_tdt+dW^1_t \ , \ X^\varepsilon_0=x_0 \ ,
\\
dY^{\varepsilon}_t=\dfrac{1}{\varepsilon}(X^{\varepsilon}_t)^2dt- Y^{\varepsilon}_tdt+dW^2_t \ , \ Y^\varepsilon_0=y_0 \ .
\end{array}\right.
\end{equation}

In this way, we see the separation of a ``fast" motion which is governed by the non--potential force term, and a ``slow"
motion which is due to the random perturbation. Due to the effect of the fast motion, starting from anywhere
$(x_0, y_0)$ that is not lying on the semi--axis $Oy_B$,
the process $(X^\varepsilon_t, Y^\varepsilon_t)$ will come close to the attracting line $Oy_A$ in a relatively short time. Let $\pi$ denote
this hitting operator, so that we have the following definition.

\begin{definition}\label{Def:ProjectionOperatorPi}
We define a projection operator $\pi: \mathbb{R}^2\backslash Oy_A\rightarrow Oy_A$,
or equivalently $y^\pi(x_0,y_0): \mathbb{R}^2\backslash Oy_A\rightarrow \mathbb{R}_+$,
such that $\pi(x_0,y_0)=(0, y^\pi(x_0,y_0))$ as follows:
when $(x_0,y_0)\in \mathbb{R}^2\backslash (Oy_A\cup Oy_B)$, we set
$y^\pi(x_0,y_0)=\lim\limits_{t\rightarrow \infty}y(t)$ where $(x(t),y(t))$ is
the deterministic flow in \eqref{Intro:Eq:ABmodel}
 with initial condition $(x(0), y(0))=(x_0,y_0)$; when $(0,y_0)$ in $Oy_B$ (i.e. $y_0<0$),
we can then naturally extend the operator $\pi$ onto the $Oy_B$ axis, so that
$y^\pi(0,y_0)=y^\pi(|y_0|\sin\kappa, -|y_0|\cos\kappa)$
for some small $\kappa>0$; finally, we define $y^\pi(0,0)=0$.
\end{definition}

In the limit
as $\varepsilon \downarrow 0$, the process $(X^\varepsilon_t, Y^\varepsilon_t)$ is pushed by the flow onto $Oy_A$, and will be close to
$\pi(x_0, y_0)$ in short time.
There, the $Y$--component $Y^\varepsilon_t$ behaves
as a $2$--dimensional linearly damped radial Bessel process (damped--BES(2)) on $Oy_A$:

\begin{equation}\label{Eq:ABmodelYComponent}
dY_t=\left(\dfrac{1}{2Y_t}-Y_t\right)dt+dW^2_t \ , Y_0=y^\pi(x_0, y_0) \ .
\end{equation}

Indeed, when $Y_t$ is close to $O$, the large positive drift term $\dfrac{1}{2Y_t}$
comes from the limit of the positive drift $\dfrac{(X^\varepsilon_t)^2}{\varepsilon}$ in the $Y$--equation
of \eqref{Eq:ABmodelPerturbedTimeRescaled} as $\varepsilon\downarrow 0$ (which is illustrated as
Corollary \ref{Cor:HitClosenessToZeroSquareXOverEps:OutsideDelta}).
This makes the origin $O$ an inaccessible point for $Y_t$. However,
for small $\varepsilon>0$, the process $(X^\varepsilon_t, Y^\varepsilon_t)$ may still enter a thin strip around
the half--line $Oy_B$ through $O$. Due to the strong Markov property
of the process $(X^\varepsilon_t, Y^\varepsilon_t)$, once it enters the domain $\mathbb{R}^2\backslash Oy_B$,
it will move along the fast flow to hit somewhere on $Oy_A$. For any fixed $\varepsilon>0$, the probability
of hitting the level $Y=-a$ for some $a>0$ before moving along the fast flow and hit somewhere
on $Oy_A$ decays to $0$ as $\varepsilon\downarrow 0$. As the process $Y_t^\varepsilon$ is closer to the origin $O$,
the positive drift term $\dfrac{(X_t^\varepsilon)^2}{\varepsilon}$ pushes the process $Y_t^\varepsilon$
to bounce back to positive $y$--axis. Thus our limiting $Y$--process, the damped--BES(2),
only lives on the positive $Y$--axis (see Figure \ref{Fig:ABModel}(c)).

The above scenario can be roughly seen by considering the radial process $r_t^\varepsilon=\sqrt{(X_t^\varepsilon)^2+(Y_t^\varepsilon)^2}$.
In fact, by applying It\^{o}'s formula to \eqref{Eq:ABmodelPerturbedTimeRescaled} we see that

\begin{equation}\label{Eq:RadialProcessBessel}
\begin{array}{ll}
dr_t^\varepsilon & =\dfrac{X_t^\varepsilon}{r_t^\varepsilon}\left[\left(-\dfrac{1}{\varepsilon}X_t^\varepsilon Y_t^\varepsilon-X_t^\varepsilon\right)dt+dW_t^1\right]
\\
& \qquad \qquad +\dfrac{Y_t^\varepsilon}{r_t^\varepsilon}\left[\left(\dfrac{1}{\varepsilon}(X_t^\varepsilon)^2-Y_t^\varepsilon\right)dt+dW_t^2\right]
+\dfrac{1}{2}\dfrac{(Y_t^\varepsilon)^2}{(r_t^\varepsilon)^3}dt+\dfrac{1}{2}\dfrac{(X_t^\varepsilon)^2}{(r_t^\varepsilon)^3}dt
\\
& =\left(\dfrac{1}{2r_t^\varepsilon}-r_t^\varepsilon\right)dt+dW^r_t \ , \ r_0^\varepsilon=\sqrt{(X_0^\varepsilon)^2+(Y_0^\varepsilon)^2}
\end{array}
\end{equation}
where $W_t^r$ is a standard Brownian motion on $\mathbb{R}$.
When the process $X_t^\varepsilon$ is pushed by the flow to be close to the $Y$--axis, we have that
$Y_t^\varepsilon$ is close to $r_t^\varepsilon$, and thus \eqref{Eq:RadialProcessBessel} indicates
the limiting $Y$--dynamics \eqref{Eq:ABmodelYComponent}.

However, for fixed $\varepsilon>0$, at a subexponential time scale, excursions of the process $(X_t^\varepsilon, Y_t^\varepsilon)$
moving from $Oy_A$ towards a level set $y=-a$
will be observed. These excursions are directly crossing through a neighborhood of $O$. Due to the
repelling nature of $Oy_B$ and the random perturbation, the process will not strictly lie on $Oy_B$ but it
will move along the fast flow and come close to somewhere on $Oy_A$. This induces jumps from points in $Oy_B$ to
points in $Oy_A$ (see Figure \ref{Fig:ABModel}(b)). At even larger time scale, such as an exponentially long time scale,
large deviation effect makes
the process $(X_t^\varepsilon, Y_t^\varepsilon)$ move from the attracting line $Oy_A$ to the repelling line $Oy_B$. Such moves are not through
$O$ but are directed motions against the fast flow. Again the instability of $Oy_B$ and the random perturbation will make the process
quickly jump back to $Oy_A$. This induces back and forth jumps between points in $Oy_A$ and those in $Oy_B$ (see Figure \ref{Fig:ABModel}(b)).
As $\varepsilon$ becomes smaller, motions of the process $(X_t^\varepsilon, Y_t^\varepsilon)$ to $Oy_B$ and jumping back become more and more rare, and in the limit
no more such jumps appear, so that we come to the limiting process $Y_t$ which cannot penetrate through $O$.
Thus as $\varepsilon>0$ is close to $0$, the description of the ``metastable" behavior of system \eqref{Intro:Eq:ABmodelPerturbed}
involves both a diffusion part and a jump part.

Figure \ref{Fig:SamplePathABModel} shows sample pathes of the $X_t^\varepsilon$ and $Y_t^\varepsilon$ processes, as well as the limiting $Y$--process
(driven by the same Brownian motion as the driving Brownian motion for $Y_t^\varepsilon$)
starting from $(X,Y)=(0,2)$ in $15000$ steps for stepsize$=0.0001$, with all steps rescaled to $[0,1]$. In Figure \ref{Fig:SamplePathABModel}(a), (b),
the red curves are the sample pathes for $Y_t$, and the blue curves are for sample pathes
$Y_t^\varepsilon$ when $\varepsilon=0.1$ (Figure \ref{Fig:SamplePathABModel}(a)) and $\varepsilon=0.01$ (Figure \ref{Fig:SamplePathABModel}(b)). In Figure \ref{Fig:SamplePathABModel}(c), (d),
the black curves are the sample pathes for $X_t^\varepsilon$ when $\varepsilon=0.1$ (Figure \ref{Fig:SamplePathABModel}(c)) and $\varepsilon=0.01$ (Figure \ref{Fig:SamplePathABModel}(d)).
One can see that the process $X_t^\varepsilon$ is mainly localized near $0$, and the process $Y_t^\varepsilon$ behaves similarly as
the process $Y_t$, especially when the parameter $\varepsilon>0$ is small.

\begin{figure}
\centering
\includegraphics[height=10cm, width=12cm, bb=0 0 594 455]{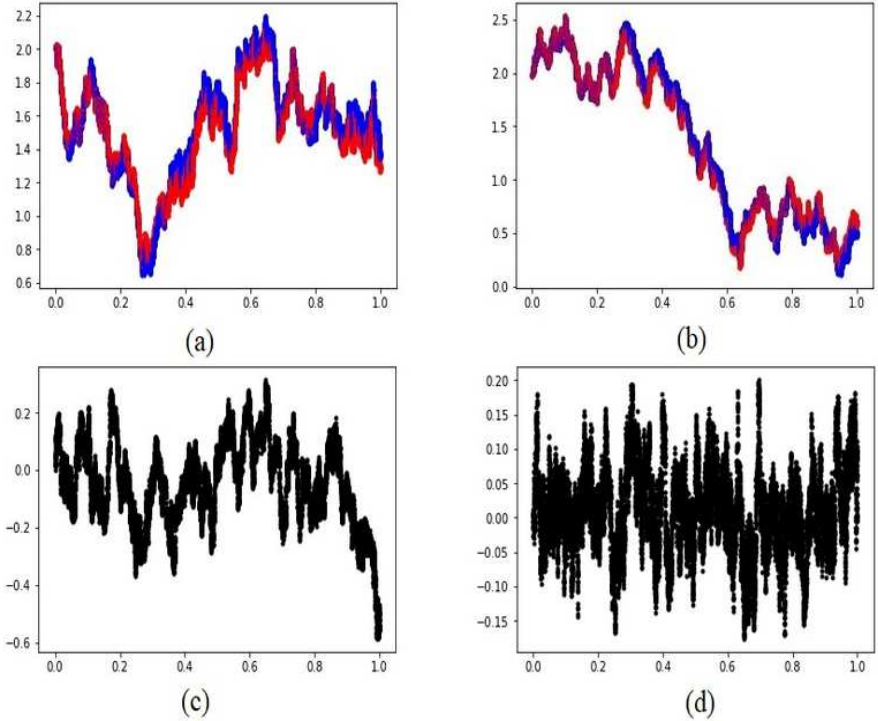}
\caption{Sample pathes of the $X_t^\varepsilon$ and $Y_t^\varepsilon$ processes, as well as the limiting $Y$--process (driven by $W_t^2$)
starting from $(X,Y)=(0,2)$ in $15000$ steps for stepsize$=0.0001$, that is rescaled to $[0,1]$. (a) $\varepsilon=0.1$; (b) $\varepsilon=0.01$;
the red curves are the sample pathes for $Y_t$, the blue curves are the sample pathes for
$Y_t^\varepsilon$. (c) $\varepsilon=0.1$; (d) $\varepsilon=0.01$;
the black curves are the sample pathes for $X_t^\varepsilon$.}
\label{Fig:SamplePathABModel}
\end{figure}

Let us also notice that, the cone formed by the set of extremal
invariant measures of the unperturbed system \eqref{Intro:Eq:ABmodel} consists of both the lines $Oy_A$ and $Oy_B$. And according to
\cite{[Freidlin2014JSPpaper]} the description of the limiting process shall be given by a Markov process on this cone.
Our result is in a sense a specific example of this general paradigm.
What we are demonstrating here is that the part $Oy_B$ of this cone is simply inaccessible, and the limiting process just lives on
$Oy_A$. This agrees with the heuristic that $Oy_A$ is the ``stable" half--line of equilibriums
and $Oy_B$ is the ``unstable" half--line of equilibriums.

\section{The limiting process and weak convergence theorem.}\label{Sec:LimitingProcess}

Let $Y_t$ be defined as the diffusion process on $\mathbb{R}$ with infinitesimal generator
given by the operator $A$ and domain of definition $D(A)$
(see \cite{[Dynkin1959]}).
For any continuous function $f: \mathbb{R} \rightarrow \mathbb{R}$ that is twice continuously
differentiable in $y\geq 0$ we have
\begin{equation}\label{Eq:LimitingGenerator:Bessel:OyA}
Af(y)=\dfrac{1}{2}\dfrac{d^2f}{d y^2}(y)+\left(\dfrac{1}{2y}-y\right)\dfrac{d f}{d y}(y) \ , \ \text{ for all } y>0 \ ,
\end{equation}
and
\begin{equation}\label{Eq:LimitingGenerator:Bessel:O}
Af(O)=\lim\limits_{y\rightarrow 0+}Af(y) \ .
\end{equation}
For $y<0$ we further define
\begin{equation}\label{Eq:LimitingGenerator:Bessel:OyB}
Af(y)=0 \text{ for all } y<0 \ .
\end{equation}

The domain of definition
of the operator $A$ is given by the set of continuous functions $f: \mathbb{R} \rightarrow \mathbb{R}$ such that $f(y)$
are twice continuously differentiable
in $y\geq 0$, with the limit of $\dfrac{d^+ f}{d y}(y)=\lim\limits_{z\rightarrow y, z>y}\dfrac{f(z)-f(y)}{z-y}$ exist
and is equal to zero as $y\rightarrow 0+$, i.e.
\begin{equation}\label{Eq:LimitingGenerator:Domain}
\lim\limits_{y\rightarrow 0+}\dfrac{d^+ f}{d y}(y)=0 \ .
\end{equation}

By \eqref{Eq:LimitingGenerator:Bessel:O} and \eqref{Eq:LimitingGenerator:Domain} we infer further that
\begin{equation}\label{Eq:LimitingGenerator:fPrimeOver-y}
\lim\limits_{y\rightarrow 0+}\dfrac{1}{y}\dfrac{d^+ f}{d y}(y)
\end{equation}
exists.

The existence of such a process $Y_t$ is guaranteed by the Hille--Yosida theorem (see \cite{[Feller1957]}, \cite{[Mandl]}).
The closure $\overline{A|_{D(A)}}$ of the operator $A$ in the space of continuous functions on $\mathbb{R}$
exists and it actually defines a Markov process on $\{y\geq 0\}$, which is a $2$--dimensional radial Bessel process
with linear damping on $\mathbb{R}_+$, that is
inaccessible to the origin $O$, and it contains isolated points on $\{y<0\}$.
Our main theorem can be stated as follows.

\begin{theorem}\label{Thm:MainTheorem}
Let $T>0$ and initial condition $(x_0, y_0)\in \mathbb{R}^2$. Then

(a) For any bounded continuous function $F:\mathbb{R}^2\rightarrow \mathbb{R}$ that is uniformly
Lipschitz continuous with a Lipschitz constant $\text{Lip}(F)<\infty$ we have
\begin{equation}\label{Thm:MainTheorem:Eq:X-WeakConvergenceTo0}
\lim\limits_{\varepsilon\downarrow 0}\mathbf{E} \left[F(X_T^\varepsilon, Y_T^\varepsilon)-F(0, Y_T^\varepsilon)\right]=0 \ .
\end{equation}

(b) The measures on $\mathbf{C}_{[0,T]}(\mathbb{R})$ induced by the
process $Y^\varepsilon_t$ converge weakly as $\varepsilon\downarrow 0$ to the measure induced by $Y_t$ with $Y_0=y^\pi(x_0, y_0)$.
\end{theorem}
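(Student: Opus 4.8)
The plan is to treat the two parts in sequence, with part (a) providing the collapse of the planar dynamics onto the $y$--axis and part (b) then identifying the effective one--dimensional limit. For part (a), since $F$ is uniformly Lipschitz we have the pointwise bound $|F(X_T^\varepsilon,Y_T^\varepsilon)-F(0,Y_T^\varepsilon)|\leq \mathrm{Lip}(F)\,|X_T^\varepsilon|$, so it suffices to prove $\mathbf{E}|X_T^\varepsilon|\to 0$. The guiding heuristic is that, whenever $Y_t^\varepsilon$ is positive and bounded away from the origin, the $X$--equation in \eqref{Eq:ABmodelPerturbedTimeRescaled} is an Ornstein--Uhlenbeck--type process with mean--reversion rate $\varepsilon^{-1}Y_t^\varepsilon+1$, whose quasi--stationary variance is of order $\varepsilon/(2Y_t^\varepsilon)$; hence $X_T^\varepsilon=O(\sqrt\varepsilon)$ in $L^2$. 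First I would make this rigorous by studying $d(X_t^\varepsilon)^2$ and closing a Gr\"onwall/comparison estimate for $\mathbf{E}(X_t^\varepsilon)^2$ in terms of the occupation statistics of the fast coordinate. The only delicate region is $\{Y_t^\varepsilon\leq 0\}$, where the drift $-\varepsilon^{-1}X_t^\varepsilon Y_t^\varepsilon$ is destabilizing; this I would control through the auxiliary estimates of Section \ref{Sec:AuxiliaryLemmas}, which quantify that the process spends asymptotically negligible time near $Oy_B$ as $\varepsilon\downarrow 0$.

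For part (b) I would follow the classical tightness--plus--martingale--problem scheme. Tightness of the laws of $\{Y^\varepsilon\}$ on $\mathbf{C}_{[0,T]}(\mathbb{R})$ I would obtain from the comparison $|Y_t^\varepsilon|\leq r_t^\varepsilon$ together with the damped--$\mathrm{BES}(2)$ equation \eqref{Eq:RadialProcessBessel} for the radial process, which yields uniform moment bounds, and from an Aldous--type estimate on the increments $\int_s^t\big(\varepsilon^{-1}(X_u^\varepsilon)^2-Y_u^\varepsilon\big)\,du+(W_t^2-W_s^2)$. To identify any weak limit point, I would apply It\^{o}'s formula to $f(Y_t^\varepsilon)$ for $f\in D(A)$ and show that
\[
M_t^{\varepsilon,f}:=f(Y_t^\varepsilon)-f(Y_0^\varepsilon)-\int_0^t\left[\tfrac{1}{2}f''(Y_s^\varepsilon)-Y_s^\varepsilon f'(Y_s^\varepsilon)+\tfrac{(X_s^\varepsilon)^2}{\varepsilon}f'(Y_s^\varepsilon)\right]ds
\]
is a martingale, and that its compensator converges to $\int_0^t Af(Y_s)\,ds$ along the limit. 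Every term here matches the generator \eqref{Eq:LimitingGenerator:Bessel:OyA} except the fast contribution $\varepsilon^{-1}(X_s^\varepsilon)^2 f'(Y_s^\varepsilon)$, which must be shown to average to $\tfrac{1}{2Y_s}f'(Y_s)$. Finally, since the martingale problem for $A$, started at $y^\pi(x_0,y_0)$ in the sense of Definition \ref{Def:ProjectionOperatorPi}, is well posed by the Hille--Yosida generation of the damped--$\mathrm{BES}(2)$ semigroup, the limit point is unique, which upgrades tightness to weak convergence and completes (b).

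The hard part will be the averaging identity, namely that $\varepsilon^{-1}(X_s^\varepsilon)^2$ contributes exactly $\tfrac{1}{2Y_s}$ inside the drift integral. The difficulty is twofold. First, the fast variable relaxes to a quasi--stationary Gaussian whose variance $\varepsilon/(2Y)$ degenerates precisely where $Y\to 0+$, which is exactly the region responsible for the singular drift $\tfrac{1}{2Y}$ and hence for the inaccessibility of $O$; one must therefore control the equilibration rate of $X^\varepsilon$ uniformly as $Y^\varepsilon$ approaches the origin, while simultaneously showing that the strong positive push keeps $Y^\varepsilon$ from lingering there. Second, the boundary behavior of the test function must be reconciled with the domain condition \eqref{Eq:LimitingGenerator:Domain}, so that $f'(Y_s)/Y_s$ stays integrable near the origin through the existence of the limit \eqref{Eq:LimitingGenerator:fPrimeOver-y}. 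I expect the bulk of the analysis, carried out via the auxiliary lemmas of Section \ref{Sec:AuxiliaryLemmas}, to reside in these two points together with showing that the contribution of excursions toward $Oy_B$ vanishes, so that the singular averaged drift is recovered without the limit process ever reaching $O$.
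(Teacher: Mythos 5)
Your skeleton matches the paper's: part (a) via the Lipschitz reduction to $\mathbf{E}|X_T^\varepsilon|\rightarrow 0$ and an It\^{o}/Gr\"onwall estimate on $\mathbf{E}(X_t^\varepsilon)^2$ (this is exactly Lemma \ref{Lm:HitClosenessToZeroX:OutsideDelta} combined with the localization Lemma \ref{Lm:HitBackToZeroX:AbsXBiggerThanHalfKappa}), and part (b) via tightness plus the martingale problem. But the step you yourself flag as ``the hard part'' --- that $\varepsilon^{-1}(X_s^\varepsilon)^2$ averages to $\tfrac{1}{2Y_s}$ inside the drift integral --- is left open in your proposal, and the route you sketch for it (controlling the equilibration of the fast variable to its quasi--stationary Gaussian \emph{uniformly as $Y^\varepsilon\rightarrow 0+$}) is not workable as stated: the quasi--stationary variance $\varepsilon/(2Y)$ and the averaged drift $1/(2Y)$ blow up at the origin, the Ornstein--Uhlenbeck picture reverses sign for $Y^\varepsilon<0$, and no uniform--in--$Y$ equilibration statement of the kind you would need is true. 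This is a genuine gap, not a technicality: it is precisely where the analytical content of the theorem lives.

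The paper closes this gap by two devices that your outline does not use. First, no averaging argument is performed at all: the radial process $r_t^\varepsilon=\sqrt{(X_t^\varepsilon)^2+(Y_t^\varepsilon)^2}$ is \emph{exactly} a damped--BES(2) for every $\varepsilon$, by \eqref{Eq:RadialProcessBessel}. You invoke this identity only for moment bounds in the tightness step, but it is the key to identifying the limit: since the martingale--problem expression $\mathbf{E}\left[f(r_{t_2}^\varepsilon)-f(r_{t_1}^\varepsilon)-\int_{t_1}^{t_2} Af(r_t^\varepsilon)\,dt\right]$ vanishes identically, one only needs to compare $Y^\varepsilon$ with $r^\varepsilon$ on the stretches where $Y_t^\varepsilon\geq\delta=\varepsilon^{1/10}$, where $|Y_t^\varepsilon-r_t^\varepsilon|\leq (X_t^\varepsilon)^2/(2\delta)$ and Lemma \ref{Lm:HitClosenessToZeroX:OutsideDelta} gives $\mathbf{E}(X_t^\varepsilon)^2\leq C\varepsilon^{1-\alpha}$; this yields Lemma \ref{Lm:HitClosenessToZeroSquareXOverEps:OutsideDelta}, and your averaging identity then follows as a \emph{consequence} (Corollary \ref{Cor:HitClosenessToZeroSquareXOverEps:OutsideDelta}) rather than serving as an input. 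Second, near the origin the paper does not attempt averaging: it introduces the crossing times $\tau_k,\sigma_k$ between the levels $\delta$ and $2\delta$, proves $\mathbf{E}(\sigma_k-\tau_k)\leq C\delta^2$ and $N(\varepsilon)\leq CT\delta^{-1}$ (Lemmas \ref{Lm:ExitTimeEstimate:NearO:TauToSigma}--\ref{Lm:UpperBoundNumberOfCrossingsBeforeT}), and uses the domain condition \eqref{Eq:LimitingGenerator:Domain}, i.e. $f'(0^+)=0$, to expand $f(Y_{\sigma_k}^\varepsilon)-f(Y_{\tau_k}^\varepsilon)=O(\delta^2)$, so that the total near--origin contribution $N(\varepsilon)\left(\delta^2+\mathbf{E}(\sigma_k-\tau_k)\right)$ vanishes. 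Your appeal to the domain condition (integrability of $f'(y)/y$ via \eqref{Eq:LimitingGenerator:fPrimeOver-y}) points in the right direction but is not the mechanism that kills these boundary terms. Without the exact radial identity (or a substitute for it) for the identification step, and without the excision--plus--bookkeeping argument at the origin, your proposal cannot be completed as written.
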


\begin{proof}
Let $\delta=\delta(\varepsilon)=\varepsilon^{\alpha}>0$ with $\delta \rightarrow 0$ as $\varepsilon \downarrow 0$. We pick $\alpha=\dfrac{1}{10}$. Set $\sigma_0=0$ and
$$\tau_k=\inf\{t\geq \sigma_{k-1}, |Y_t^\varepsilon|=\delta\} \ , \ \sigma_k=\inf\{t\geq \tau_k, |Y_t^\varepsilon|=2\delta\} \ , \ k=1,2,... .$$

Our proof intuitively goes as follows:

Step 1. We show that if $Y_t^\varepsilon\geq \delta$, then as $\varepsilon\downarrow 0$ the process $X_t^\varepsilon$ is very close to the $Y$--axis.
This is proved in Lemma \ref{Lm:HitClosenessToZeroX:OutsideDelta}. We then show in Lemma \ref{Lm:HitClosenessToZeroSquareXOverEps:OutsideDelta}
and Corollary \ref{Cor:HitClosenessToZeroSquareXOverEps:OutsideDelta}
that as $X_t^\varepsilon$ is small, the quantity $\dfrac{(X_t^\varepsilon)^2}{\varepsilon}$ is close to $\dfrac{1}{2Y_t^\varepsilon}$.
In particular, this makes the process $Y_t^\varepsilon$ behaves close to a $2$--dimensional radial Bessel process
with linear damping when $Y_t^\varepsilon\geq \delta$.

Step 2. We show that during the time $\tau_k\leq t\leq \sigma_k$ we have $|X_t^\varepsilon|\leq 3\delta$ with high probability.
This is because whenever $|X_t^\varepsilon|\geq 2\delta$ the flow \eqref{Eq:ABmodelPerturbedTimeRescaled}
with small $\varepsilon>0$ will quickly bring the particle back to the region $Y\geq \delta$, and during this process
the $|X|$--value is less or equal than $3\delta$. This is done in Lemma
\ref{Lm:HitBackToZeroX:AbsXBiggerThanHalfKappa}.

Step 3. We show that $\mathbf{P}(Y^\varepsilon_{\sigma_k}=2\delta)\rightarrow 1$ as $\varepsilon\downarrow 0$ and therefore $\delta(\varepsilon)\rightarrow 0$.
This is because if $Y_t^\varepsilon\leq -1.5\delta$, then the flow of \eqref{Eq:ABmodelPerturbedTimeRescaled} with small $\varepsilon>0$
will quickly bring the particle back to $Y\geq \delta$, and during this process the $Y$--coordinate is $\geq -1.99\delta$ with probability $\rightarrow 1$
as $\varepsilon \downarrow 0$. This is done in Lemma \ref{Lm:HitBackToZeroX:YLessThanMinusDelta}.

Step 4. We then estimate $\mathbf{E}(\sigma_k-\tau_k)\lesssim \mathcal{O}(\delta^2)$ in Lemma \ref{Lm:ExitTimeEstimate:NearO:TauToSigma}. By making use of the fact that $|X_t^\varepsilon|$
will be close to $0$ for $\sigma_k\leq t\leq \tau_{k+1}$, we estimate $\mathbf{E}(\tau_{k+1}-\sigma_k)\gtrsim \mathcal{O}(\delta)\rightarrow 0$ as $\varepsilon\downarrow 0$
in Lemma \ref{Lm:ExitTimeEstimate:FarFromO:SigmaToTau}. The asymptotic lower bound for $\mathbf{E}(\tau_{k+1}-\sigma_k)$ provides us with an upper bound on the
number of up--crossings $N(\varepsilon)\lesssim \mathcal{O}(\delta^{-1})$ from $\delta$ to $2\delta$ before time $T$. This is done in Lemma \ref{Lm:UpperBoundNumberOfCrossingsBeforeT}.
Combining Lemmas \ref{Lm:UpperBoundNumberOfCrossingsBeforeT} and \ref{Lm:ExitTimeEstimate:NearO:TauToSigma} we obtain that
$N(\varepsilon)\cdot \mathbf{E}(\sigma_k-\tau_k) \rightarrow 0$ as $\varepsilon\downarrow 0$.

Steps 1 and 2 together help us to settle \eqref{Thm:MainTheorem:Eq:X-WeakConvergenceTo0} so part (a) of this Theorem.
To prove part (b) of this Theorem, we shall make use of a modification of Lemma 3.1 in \cite[Chapter 8]{[FWbook]}.
This has been used in the works \cite{[FKWtrap]}, \cite{[Freidlin-Koralov-WentzellPockets]}, \cite{[FW-DiffusionGraph]},
\cite{[DK-AveragingErgodic-AOP]}, \cite{[DK-AveragingErgodic-JAMS]}, \cite{[Freidlin-Hu-Wentzell-SPA]}, \cite{[Freidlin-Hu-CPDE]}.
First, in Lemma \ref{Lm:TightnessYProcess} we show that the family of processes $Y^\varepsilon_t$ is tight in $\mathbf{C}_{[0,T]}(\mathbb{R})$.
Secondly, we show that for every continuous function $f: \mathbb{R}_+\rightarrow \mathbb{R}$ such that $f\in D(A)$ and every $T>0$, having bounded derivatives
up to the third order, uniformly in the initial condition $(x_0,y_0)\in \mathbb{R}^2$ we have
\begin{equation}\label{Thm:MainTheorem:Eq:ConvergenceMartingaleProblem}
\mathbf{E}_{(x_0,y_0)}\left[f(Y_T^\varepsilon)-f(y^\pi(X_0^\varepsilon, Y_0^\varepsilon))-\int_0^T Af(Y_t^\varepsilon)dt\right]\rightarrow 0
\end{equation}
as $\varepsilon\downarrow 0$. The desired convergence in (b) then follows from \eqref{Thm:MainTheorem:Eq:ConvergenceMartingaleProblem}
by the argument using martingale problem formulation of Markov processes
(see \cite[Chapter 4]{[Ethier-Kurtz]}). We are left with proving \eqref{Thm:MainTheorem:Eq:ConvergenceMartingaleProblem}. To this end, we decompose
$$\begin{array}{ll}
& \displaystyle{\mathbf{E}\left[f(Y_T^\varepsilon)-f(y^\pi(X_0^\varepsilon, Y_0^\varepsilon))-\int_0^T Af(Y_t^\varepsilon)dt\right]}
\\
=& \displaystyle{\sum\limits_{k=1}^N \mathbf{E}\left[f(Y_{\tau_k}^\varepsilon)-f(Y_{\sigma_{k-1}}^\varepsilon)-\int_{\sigma_{k-1}}^{\tau_k} Af(Y_t^\varepsilon)dt\right]
+\sum\limits_{k=1}^N \mathbf{E}\left[f(Y_{\sigma_k}^\varepsilon)-f(Y_{\tau_k}^\varepsilon)-\int_{\tau_k}^{\sigma_k} Af(Y_t^\varepsilon)dt\right]}
\\
& \displaystyle{\qquad \qquad - \mathbf{E}\left[f(Y_{\sigma_N}^\varepsilon)-f(Y_T^\varepsilon)-\int_T^{\sigma_N} Af(Y_t^\varepsilon)dt\right]}
\\
= & (I)+(I\!I)-(I\!I\!I) \ .
\end{array}$$

Let us first estimate $(I)$. In fact, we can estimate, by Lemma \ref{Lm:HitClosenessToZeroSquareXOverEps:OutsideDelta}, that
$$\left|\mathbf{E}\left[f(Y_{\tau_k}^\varepsilon)-f(Y_{\sigma_{k-1}}^\varepsilon)-\int_{\sigma_{k-1}}^{\tau_k} Af(Y_t^\varepsilon)dt\right]\right|\leq
 CT\varepsilon^{1-4\alpha} \ .$$

This helps us to conclude, by further making use of Lemma \ref{Lm:UpperBoundNumberOfCrossingsBeforeT} that

$$\left|\sum\limits_{k=1}^N \mathbf{E}\left[f(Y_{\tau_k}^\varepsilon)-f(Y_{\sigma_{k-1}}^\varepsilon)-\int_{\sigma_{k-1}}^{\tau_k} Af(Y_t^\varepsilon)dt\right]\right|\leq
CT^2\varepsilon^{1-5\alpha} \rightarrow 0$$
as $\varepsilon\downarrow 0$, for $0<\alpha<\dfrac{1}{5}$ say $\alpha=\dfrac{1}{10}$.

To estimate $(I\!I)$, we notice that $Y_{\sigma_k}^\varepsilon=2\delta$ and $Y_{\tau_k}^\varepsilon=\delta$. Thus by using the fact that $f'(0)=0$ we obtain

$$f(Y_{\sigma_k}^\varepsilon)-f(0)\approx 4f''(0)\delta^2+\mathcal{O}(\delta^3) \ , \ f(Y_{\tau_k}^\varepsilon)-f(0)\approx f''(0)\delta^2+\mathcal{O}(\delta^3) \ ,$$
so that
$$\left|\mathbf{E}\left[f(Y_{\sigma_k}^\varepsilon)-f(Y_{\tau_k}^\varepsilon)-\int_{\tau_k}^{\sigma_k} Af(Y_t^\varepsilon)dt\right]\right|
\leq C_1|f''(0)|\delta^2+C_2\mathbf{E}(\sigma_k-\tau_k) \ .$$
This combined with the fact that $N(\varepsilon)\cdot \mathbf{E}(\sigma_k-\tau_k)\rightarrow 0$ and $N\cdot \delta^2\rightarrow 0$ as $\varepsilon\downarrow 0$ from Lemmas \ref{Lm:UpperBoundNumberOfCrossingsBeforeT} and \ref{Lm:ExitTimeEstimate:NearO:TauToSigma}, help us to conclude that
$|(I\!I)|\rightarrow 0$ as $\varepsilon\downarrow 0$.

Finally it is easy to see that $|(I\!I\!I)|\rightarrow 0$ as $\varepsilon \downarrow 0$. Thus \eqref{Thm:MainTheorem:Eq:ConvergenceMartingaleProblem} is proved.
\end{proof}

\section{Proof of auxiliary lemmas.}\label{Sec:AuxiliaryLemmas}

Recall that by \eqref{Eq:ABmodelPerturbedTimeRescaled}, we have
$$\begin{array}{lll}
dX_t^\varepsilon=\left(-\dfrac{1}{\varepsilon}X_t^\varepsilon Y_t^\varepsilon-X_t^\varepsilon\right)dt+dW_t^1 & , & X_0^\varepsilon=x_0 \ ,
\\
dY_t^\varepsilon=\left(\dfrac{1}{\varepsilon}(X_t^\varepsilon)^2-Y_t^\varepsilon\right)dt+dW_t^2 & , & Y_0^\varepsilon=y_0 \ .
\end{array}$$

\begin{lemma}\label{Lm:HitClosenessToZeroX:OutsideDelta}
For any $\delta=\delta(\varepsilon)$ such that $\delta=\varepsilon^{\alpha} \rightarrow 0$ as $\varepsilon\downarrow 0$ for some $0<\alpha<1$,
there exist some $t_0=t_0(\varepsilon)$ which can be picked as $t_0(\varepsilon)=\varepsilon^{(1-\alpha)/2}$, such that
as $t\geq t_0(\varepsilon)$ and while $Y_s^\varepsilon\geq \delta$ for $0\leq s \leq t$, we have
\begin{equation}\label{Lm:HitClosenessToZeroX:OutsideDelta:Eq:EstimateOfSquareX}
\mathbf{E} (X_t^\varepsilon)^2\leq C\varepsilon^{1-\alpha}
\end{equation}
for some $C>0$.
\end{lemma}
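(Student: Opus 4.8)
The plan is to exploit the strong linear mean reversion that the constraint $Y_s^\varepsilon \ge \delta$ forces on the $X$--equation. First I would apply It\^{o}'s formula to $(X_t^\varepsilon)^2$, using the $X$--equation of \eqref{Eq:ABmodelPerturbedTimeRescaled}, to obtain
\[ d(X_t^\varepsilon)^2 = \left[-\frac{2}{\varepsilon}(X_t^\varepsilon)^2 Y_t^\varepsilon - 2(X_t^\varepsilon)^2 + 1\right]dt + 2X_t^\varepsilon\,dW_t^1. \]
While $Y_s^\varepsilon \ge \delta$ the linear damping coefficient multiplying $(X_t^\varepsilon)^2$ is at least $2\delta/\varepsilon = 2\varepsilon^{\alpha - 1}$, which blows up as $\varepsilon \downarrow 0$. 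Taking expectations and discarding the favorable term $-2(X_t^\varepsilon)^2 \le 0$, I would arrive at the linear differential inequality
\[ \frac{d}{dt}\,\mathbf{E}(X_t^\varepsilon)^2 \le -\frac{2\delta}{\varepsilon}\,\mathbf{E}(X_t^\varepsilon)^2 + 1. \]

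Setting $\lambda = 2\delta/\varepsilon = 2\varepsilon^{\alpha-1}$, a Gr\"onwall (or direct integration) argument then gives
\[ \mathbf{E}(X_t^\varepsilon)^2 \le (X_0^\varepsilon)^2 e^{-\lambda t} + \frac{1}{\lambda}\left(1 - e^{-\lambda t}\right) \le x_0^2\, e^{-\lambda t} + \frac{1}{2}\varepsilon^{1-\alpha}. \]
The stationary level $1/\lambda = \frac{1}{2}\varepsilon^{1-\alpha}$ is exactly the target order, so it remains only to dispose of the transient $x_0^2 e^{-\lambda t}$; this is precisely the role of the threshold $t_0(\varepsilon) = \varepsilon^{(1-\alpha)/2}$. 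Indeed $\lambda t_0 = 2\varepsilon^{(\alpha-1)/2} = 2\varepsilon^{-(1-\alpha)/2} \to \infty$, so for $t \ge t_0$ the transient is super-polynomially small in $\varepsilon$ and is dominated by $\varepsilon^{1-\alpha}$. Absorbing everything into a constant $C$ yields \eqref{Lm:HitClosenessToZeroX:OutsideDelta:Eq:EstimateOfSquareX}.

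The main obstacle is to make rigorous the pathwise restriction ``while $Y_s^\varepsilon \ge \delta$ for $0 \le s \le t$'', since one cannot naively push the constraint through the expectation in the bound $\mathbf{E}[(X_s^\varepsilon)^2 Y_s^\varepsilon] \ge \delta\,\mathbf{E}(X_s^\varepsilon)^2$. I would handle this by introducing the stopping time $\tau_\delta = \inf\{s \ge 0 : Y_s^\varepsilon \le \delta\}$ and running the estimate for the restricted quantity $w(t) = \mathbf{E}\big[(X_t^\varepsilon)^2\,\mathbf{1}_{\{\tau_\delta > t\}}\big]$. On the event $\{\tau_\delta > t\}$ one genuinely has $Y_s^\varepsilon \ge \delta$ for all $s \le t$, so $\mathbf{E}[\mathbf{1}_{\{\tau_\delta>t\}}(X_t^\varepsilon)^2 Y_t^\varepsilon] \ge \delta\, w(t)$ is legitimate; moreover, since $(X_t^\varepsilon)^2 \ge 0$, the loss of mass as the event $\{\tau_\delta > t\}$ shrinks in $t$ contributes with a favorable (nonpositive) sign, so that $w$ still obeys $w'(t) \le -\lambda w(t) + 1$ and the Gr\"onwall bound goes through verbatim.

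Finally, two standard points need checking. To justify that $\int_0^t 2 X_s^\varepsilon\,dW_s^1$ is a true martingale (so that it drops out under $\mathbf{E}$), I would localize with $\tau_n = \inf\{s : |X_s^\varepsilon| \ge n\}$ and pass to the limit by Fatou, using that \eqref{Eq:ABmodelPerturbedTimeRescaled} has finite second moments — which follows from the fact that the radial process $r_t^\varepsilon$ in \eqref{Eq:RadialProcessBessel} is a linearly damped Bessel process and hence has moments of all orders. The same localization can be combined with $\tau_\delta$ at no extra cost. I expect the only genuinely delicate bookkeeping to be this simultaneous localization together with the verification that the boundary term generated at $\tau_\delta$ carries the correct sign; everything else reduces to the elementary linear estimate above.
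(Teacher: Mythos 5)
Your proposal is correct and takes essentially the same route as the paper's own proof: It\^{o}'s formula applied to $(X_t^\varepsilon)^2$, taking expectations, using $Y_s^\varepsilon\geq\delta$ to bound the drift by $-\frac{2\delta}{\varepsilon}\mathbf{E}(X_s^\varepsilon)^2+1$, and integrating (the paper uses the integrating factor $e^{2(\delta/\varepsilon+1)s}$) so that the stationary level $\frac{1}{2}\varepsilon^{1-\alpha}$ dominates once $t\geq t_0(\varepsilon)=\varepsilon^{(1-\alpha)/2}$ makes the transient $x_0^2e^{-\lambda t}$ negligible. The stopping-time and martingale-localization scaffolding in your last two paragraphs goes beyond the published argument, which simply treats the constraint $Y_s^\varepsilon\geq\delta$ as holding pathwise and computes formally; that extra care is in the right spirit, though your claimed differential inequality for $w(t)=\mathbf{E}\bigl[(X_t^\varepsilon)^2\mathbf{1}_{\{\tau_\delta>t\}}\bigr]$ would itself need more work, since $\mathbf{1}_{\{\tau_\delta>t\}}$ anticipates the Brownian increments on $[s,t]$ (so the stochastic integral does not vanish against it, and shrinking the indicator weakens the favorable $-\lambda(X_s^\varepsilon)^2$ term rather than the unfavorable one).
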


\begin{proof}
Let $Y_s^\varepsilon\geq \delta$ for $0\leq s\leq t$. Let $s\in [0,t]$ and we
consider applying It\^{o}'s formula to $(X_s^\varepsilon)^2$. In this way, we obtain from
\eqref{Eq:ABmodelPerturbedTimeRescaled} that

\begin{equation}\label{Lm:HitClosenessToZeroX:OutsideDelta:Eq:ItoXSquare}
\begin{array}{ll}
d(X_s^\varepsilon)^2 & =2X_s^\varepsilon dX_s^\varepsilon+(dX_s^\varepsilon)^2
\\
& =2\left(-\dfrac{Y_s^\varepsilon}{\varepsilon}-1\right)(X_s^\varepsilon)^2ds+2X_s^\varepsilon dW_s^1+ds \ .
\end{array}
\end{equation}

Therefore taking expectation in \eqref{Lm:HitClosenessToZeroX:OutsideDelta:Eq:ItoXSquare} we obtain

\begin{equation}\label{Lm:HitClosenessToZeroX:OutsideDelta:Eq:ItoXSquareExpectation}
d\mathbf{E}(X_s^\varepsilon)^2=2\mathbf{E}\left(-\dfrac{Y_s^\varepsilon}{\varepsilon}-1\right)(X_s^\varepsilon)^2ds+ds \ .
\end{equation}

As we have $Y_s^\varepsilon\geq \delta$ and $(X_s^\varepsilon)^2\geq 0$, we can estimate

$$\left(-\dfrac{Y_s^\varepsilon}{\varepsilon}-1\right)(X_s^\varepsilon)^2\leq \left(-\dfrac{\delta}{\varepsilon}-1\right)(X_s^\varepsilon)^2 \ ,$$
so that \eqref{Lm:HitClosenessToZeroX:OutsideDelta:Eq:ItoXSquareExpectation} becomes

$$d\mathbf{E}(X_s^\varepsilon)^2\leq 2\left(-\dfrac{\delta}{\varepsilon}-1\right)\mathbf{E}(X_s^\varepsilon)^2ds+ds \ .$$

Thus
$$\begin{array}{ll}
d\left[e^{2(\frac{\delta}{\varepsilon}+1)s}\mathbf{E}(X_s^\varepsilon)^2\right] & \leq e^{2(\frac{\delta}{\varepsilon}+1)s}\left(2\left(-\dfrac{\delta}{\varepsilon}-1\right)\mathbf{E}(X_s^\varepsilon)^2ds+ds+
2\left(\dfrac{\delta}{\varepsilon}+1\right)\mathbf{E}(X_s^\varepsilon)^2ds\right)
\\
& = e^{2(\frac{\delta}{\varepsilon}+1)s}ds \ .
\end{array}$$

Integrating the above differential inequality in the argument $s$ from $0$ to $t$ we see that we have
$$e^{2(\frac{\delta}{\varepsilon}+1)t}\mathbf{E}(X_t^\varepsilon)^2-\mathbf{E}(X_0^\varepsilon)^2\leq \dfrac{1}{2(\frac{\delta}{\varepsilon}+1)}\left(e^{2(\frac{\delta}{\varepsilon}+1)t}-1\right) \ ,$$
i.e.
$$\mathbf{E} (X_t^\varepsilon)^2\leq e^{-2(\frac{\delta}{\varepsilon}+1)t}\mathbf{E}(X_0^\varepsilon)^2+\dfrac{1}{2(\frac{\delta}{\varepsilon}+1)}(1-e^{-2(\frac{\delta}{\varepsilon}+1)t}) \ .$$
So finally we obtain the estimate

\begin{equation}\label{Lm:HitClosenessToZeroX:OutsideDelta:Eq:XSquareExpectationBound}
\mathbf{E} (X_t^\varepsilon)^2\leq e^{-2(\frac{\delta}{\varepsilon}+1)t}\mathbf{E}(X_0^\varepsilon)^2+\dfrac{1}{2(\frac{\delta}{\varepsilon}+1)} \ .
\end{equation}

As we have $\delta=\varepsilon^{\alpha}$, the above estimate \eqref{Lm:HitClosenessToZeroX:OutsideDelta:Eq:XSquareExpectationBound}
implies that we have

$$\mathbf{E} (X_t^\varepsilon)^2\leq e^{-2(\varepsilon^{-(1-\alpha)}+1)t}\mathbf{E}(X_0^\varepsilon)^2+\dfrac{1}{2}\varepsilon^{1-\alpha} \ .$$
From here we infer that as $t\geq t_0(\varepsilon)$ and $\varepsilon>0$ sufficiently small we have
$$\mathbf{E} (X_t^\varepsilon)^2\leq C\varepsilon^{1-\alpha}$$
for some $C>0$. In particular, we can pick $t_0(\varepsilon)=\varepsilon^{(1-\alpha)/2}$.
\end{proof}

The above estimate \eqref{Lm:HitClosenessToZeroX:OutsideDelta:Eq:EstimateOfSquareX} cannot provide a precise estimate
for $\dfrac{(X_t^\varepsilon)^2}{\varepsilon}$, which enters as the first term in the right--hand side of the equation for $Y_t^\varepsilon$.
In fact, this estimate can be obtained by first noticing the following Lemma.

\begin{lemma}\label{Lm:HitClosenessToZeroSquareXOverEps:OutsideDelta}
There exist some constant $C>0$ so that for small $\varepsilon>0$ and any function $f\in D(A)$
with bounded derivatives up to third order, uniformly in $k=1,2,...,N$ we have
\begin{equation}\label{Lm:HitClosenessToZeroSquareXOverEps:OutsideDelta:Eq:MartingaleErrorY-r}
\left|\mathbf{E}\left[f(Y_{\tau_k}^\varepsilon)-f(Y_{\sigma_{k-1}}^\varepsilon)-\displaystyle{\int_{\sigma_{k-1}}^{\tau_k}Af(Y_t^\varepsilon)dt}\right]\right|\leq CT\varepsilon^{1-4\alpha} \ .
\end{equation}
Here the constant $C>0$ depends on the bounds for the derivatives of $f$.
\end{lemma}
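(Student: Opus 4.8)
The plan is to recognize the bracketed quantity as a Dynkin-type martingale remainder and then to exploit the structure of the fast $X^\varepsilon$-equation to play the two a priori large terms off against each other. First I would apply It\^o's formula to $f(Y_t^\varepsilon)$ using the $Y$-equation in \eqref{Eq:ABmodelPerturbedTimeRescaled}: the drift produced is $\tfrac12 f''(Y_t^\varepsilon)+\big(\tfrac{(X_t^\varepsilon)^2}{\varepsilon}-Y_t^\varepsilon\big)f'(Y_t^\varepsilon)$, and subtracting $Af(Y_t^\varepsilon)$ from \eqref{Eq:LimitingGenerator:Bessel:OyA} and taking expectations (the $f'(Y_t^\varepsilon)\,dW_t^2$ term is a mean-zero martingale because $f'$ is bounded) reduces the left-hand side of \eqref{Lm:HitClosenessToZeroSquareXOverEps:OutsideDelta:Eq:MartingaleErrorY-r} exactly to
\[
\mathbf{E}\int_{\sigma_{k-1}}^{\tau_k}\left(\frac{(X_t^\varepsilon)^2}{\varepsilon}-\frac{1}{2Y_t^\varepsilon}\right)f'(Y_t^\varepsilon)\,dt .
\]
A naive triangle-inequality bound here only gives $\mathcal{O}(T\varepsilon^{-\alpha})$ and blows up, so the whole point is that $\tfrac{(X_t^\varepsilon)^2}{\varepsilon}$ and $\tfrac{1}{2Y_t^\varepsilon}$ must cancel.

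The key algebraic step is to solve the It\^o equation \eqref{Lm:HitClosenessToZeroX:OutsideDelta:Eq:ItoXSquare} for the stiff term. Rearranging and dividing by $2Y_t^\varepsilon$ yields the identity
\[
\left(\frac{(X_t^\varepsilon)^2}{\varepsilon}-\frac{1}{2Y_t^\varepsilon}\right)dt=-\frac{(X_t^\varepsilon)^2}{Y_t^\varepsilon}\,dt+\frac{X_t^\varepsilon}{Y_t^\varepsilon}\,dW_t^1-\frac{1}{2Y_t^\varepsilon}\,d(X_t^\varepsilon)^2 .
\]
I then multiply by $f'(Y_t^\varepsilon)$, set $\psi(y):=f'(y)/y$ (bounded near $0$ by \eqref{Eq:LimitingGenerator:fPrimeOver-y} and smooth for $y\ge\delta$), integrate over $[\sigma_{k-1},\tau_k]$ and take expectations. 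The first term is bounded by $\|\psi\|_\infty\,\sup_t\mathbf{E}(X_t^\varepsilon)^2\cdot(\tau_k-\sigma_{k-1})\lesssim T\varepsilon^{1-\alpha}$ via Lemma \ref{Lm:HitClosenessToZeroX:OutsideDelta} (the duration being at most $T$), and the $dW_t^1$ term is a martingale with zero mean.

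The remaining term $-\tfrac12\mathbf{E}\int_{\sigma_{k-1}}^{\tau_k}\psi(Y_t^\varepsilon)\,d(X_t^\varepsilon)^2$ is the crux, and substituting the drift of $(X^\varepsilon)^2$ directly would be circular since it reproduces $\tfrac{(X^\varepsilon)^2}{\varepsilon}$. Instead I would integrate by parts, $\int\psi\,d(X^2)=\big[\psi (X^2)\big]_{\sigma_{k-1}}^{\tau_k}-\int (X^2)\,d\psi(Y^\varepsilon)-\big[\psi(Y^\varepsilon),(X^\varepsilon)^2\big]$. The boundary term is $\mathcal{O}(\varepsilon^{1-\alpha})$ by Lemma \ref{Lm:HitClosenessToZeroX:OutsideDelta}, and the quadratic covariation vanishes because the martingale parts of $\psi(Y^\varepsilon)$ and $(X^\varepsilon)^2$ are driven by the \emph{independent} Brownian motions $W^2$ and $W^1$. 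Expanding $d\psi(Y_t^\varepsilon)$ by It\^o reintroduces $\tfrac{(X_t^\varepsilon)^2}{\varepsilon}$, but now against the extra $(X_t^\varepsilon)^2$ from the integrand, producing $\tfrac{(X_t^\varepsilon)^4}{\varepsilon}\psi'(Y_t^\varepsilon)$, which is genuinely smaller. \textbf{Controlling this term is the main obstacle}: it requires a fourth-moment bound $\mathbf{E}(X_t^\varepsilon)^4\lesssim\varepsilon^{2(1-\alpha)}$, obtained by the same exponential integrating-factor argument as in Lemma \ref{Lm:HitClosenessToZeroX:OutsideDelta} applied to $(X_t^\varepsilon)^4$, together with the crude bound $|\psi'(y)|\lesssim y^{-2}\le\delta^{-2}=\varepsilon^{-2\alpha}$ valid because $Y_t^\varepsilon\ge\delta$ throughout the excursion; the product then contributes $T\cdot\varepsilon^{1-2\alpha}\cdot\varepsilon^{-2\alpha}=T\varepsilon^{1-4\alpha}$, which is exactly the claimed order. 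The lower-order pieces arising from the drift of $d\psi$, namely $(X^2)\psi' Y$ and $(X^2)\psi''$, are dominated by the same power once $Y_t^\varepsilon\ge\delta$ is used.

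Two technical points need care. First, Lemma \ref{Lm:HitClosenessToZeroX:OutsideDelta} is stated at deterministic times $t\ge t_0(\varepsilon)$, so to apply it at the random times $\sigma_{k-1},\tau_k$ and pointwise inside the excursion I would invoke the strong Markov property, restarting the clock at $\sigma_{k-1}$ (where $Y^\varepsilon=2\delta\ge\delta$) and using $t_0(\varepsilon)=\varepsilon^{(1-\alpha)/2}\to0$. Second, the moment bounds require $Y_t^\varepsilon\ge\delta$ \emph{with the correct sign}; since $|Y_t^\varepsilon|=\delta$ also admits a negative excursion, I would restrict to the high-probability event $\{Y_{\sigma_k}^\varepsilon=2\delta\}$ on which the path stays positive (Lemma \ref{Lm:HitBackToZeroX:YLessThanMinusDelta}) and absorb the complement, whose probability tends to $0$, into the error. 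Collecting all contributions gives the bound $CT\varepsilon^{1-4\alpha}$ uniformly in $k$, with $C$ depending only on the bounds for the derivatives of $f$.
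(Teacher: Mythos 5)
Your proof is correct at the paper's level of rigor, but it follows a genuinely different route. The paper never returns to the $(X^\varepsilon)^2$-equation after Lemma \ref{Lm:HitClosenessToZeroX:OutsideDelta}: instead it compares $Y_t^\varepsilon$ with the radial process $r_t^\varepsilon=\sqrt{(X_t^\varepsilon)^2+(Y_t^\varepsilon)^2}$, which by \eqref{Eq:RadialProcessBessel} is \emph{exactly} a damped BES(2), so that $\mathbf{E}\bigl[f(r_{\tau_k}^\varepsilon)-f(r_{\sigma_{k-1}}^\varepsilon)-\int_{\sigma_{k-1}}^{\tau_k} Af(r_t^\varepsilon)\,dt\bigr]=0$ identically; the lemma then reduces to estimating the difference of the $Y$- and $r$-expressions via $|Y_t^\varepsilon-r_t^\varepsilon|\le (X_t^\varepsilon)^2/(2\delta)$, the $\delta^{-1}$-to-$\delta^{-3}$ Lipschitz costs of the coefficients of $A$ on $\{y\ge\delta\}$, and Lemma \ref{Lm:HitClosenessToZeroX:OutsideDelta}, yielding the count $\frac{M}{\delta}\mathbf{E}(X^\varepsilon)^2+\frac{M}{\delta^3}\int_0^T\mathbf{E}(X_t^\varepsilon)^2dt\lesssim T\varepsilon^{1-4\alpha}$. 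You instead stay intrinsic to $Y^\varepsilon$: your Dynkin reduction to the stiff integral $\mathbf{E}\int\bigl(\tfrac{(X^\varepsilon)^2}{\varepsilon}-\tfrac{1}{2Y^\varepsilon}\bigr)f'(Y^\varepsilon)\,dt$ is precisely the identity the paper uses only later, in Corollary \ref{Cor:HitClosenessToZeroSquareXOverEps:OutsideDelta}, and you resolve the cancellation by solving the It\^o equation for $(X^\varepsilon)^2$ for the stiff term, integrating by parts, killing the covariation via independence of $W^1$ and $W^2$, and invoking a fourth-moment bound $\mathbf{E}(X_t^\varepsilon)^4\lesssim\varepsilon^{2-2\alpha}$; your exponent bookkeeping ($\varepsilon^{1-2\alpha}\cdot\delta^{-2}$ for the $(X^\varepsilon)^4\psi'/\varepsilon$ term, $\varepsilon^{1-\alpha}\cdot\delta^{-3}$ for the $\psi''$ term, both of order $\varepsilon^{1-4\alpha}$) checks out. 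What the paper's route buys is brevity: it exploits the special coincidence that the radial process of this model is autonomous and equal in law to the limit process, so no fourth moments and no integration by parts are needed. What your route buys is robustness: it uses only moment estimates on $X^\varepsilon$, would survive modifications of the model for which no exactly solvable comparison process exists, and it delivers the Corollary \ref{Cor:HitClosenessToZeroSquareXOverEps:OutsideDelta}-type estimate directly rather than deriving it from the lemma afterwards (your logical order is inverted relative to the paper's). Finally, your two flagged technical points --- applying Lemma \ref{Lm:HitClosenessToZeroX:OutsideDelta} at the random times $\sigma_{k-1},\tau_k$, and restricting to the high-probability event that excursions exit upward so that $Y_{\sigma_{k-1}}^\varepsilon=2\delta$ and $Y_t^\varepsilon\ge\delta$ on $[\sigma_{k-1},\tau_k]$ --- are glossed over by the paper as well (it simply asserts $Y_{\sigma_k}^\varepsilon=2\delta$ and integrates $\mathbf{E}(X_t^\varepsilon)^2$ over all of $[0,T]$), so you are, if anything, more explicit about these shared gaps than the original.
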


\begin{proof}
Let us assume that there exist uniform constants $M_1>0$, $M_2>0$ and $M_3>0$ such that
$|f'(y)|\leq M_1$, $|f''(y)|\leq M_2$ and $|f'''(y)|\leq M_3$. In fact, as
$Y_{\tau_k}^\varepsilon=\delta$ and $Y_{\sigma_k}^\varepsilon=2\delta$ for $k=1,2,...,N$, we have

\begin{equation}\label{Lm:HitClosenessToZeroSquareXOverEps:OutsideDelta:Eq:MartingaleErrorY-r-1}
\begin{array}{ll}
& \left|\mathbf{E}\left[f(Y_{\tau_k}^\varepsilon)-f(Y_{\sigma_{k-1}}^\varepsilon)-\displaystyle{\int_{\sigma_{k-1}}^{\tau_k}Af(Y_t^\varepsilon)dt}\right]
-\mathbf{E}\left[f(r_{\tau_k}^\varepsilon)-f(r_{\sigma_{k-1}}^\varepsilon)-\displaystyle{\int_{\sigma_{k-1}}^{\tau_k}Af(r_t^\varepsilon)dt}\right]\right|
\\
\leq &
M_1 \left[\mathbf{E}|Y_{\tau_k}^\varepsilon-r_{\tau_k}^\varepsilon|+\mathbf{E}|Y_{\sigma_{k-1}}^\varepsilon-r_{\sigma_{k-1}}^\varepsilon|\right]
\\
& \ + \mathbf{E}\left|\displaystyle{\int_{\sigma_{k-1}}^{\tau_k}}
\left[\left(\dfrac{1}{2Y_t^\varepsilon}-Y_t^\varepsilon\right)f'(Y_t^\varepsilon)+\dfrac{1}{2}f''(Y_t^\varepsilon)\right]dt
- \displaystyle{\int_{\sigma_{k-1}}^{\tau_k}}
\left[\left(\dfrac{1}{2r_t^\varepsilon}-r_t^\varepsilon\right)f'(r_t^\varepsilon)+\dfrac{1}{2}f''(r_t^\varepsilon)\right]dt\right|
\\
= & M_1 \left[\mathbf{E}|Y_{\tau_k}^\varepsilon-r_{\tau_k}^\varepsilon|+\mathbf{E}|Y_{\sigma_{k-1}}^\varepsilon-r_{\sigma_{k-1}}^\varepsilon|\right]
\\
& \ + \mathbf{E}\left|\displaystyle{\int_{\sigma_{k-1}}^{\tau_k}}
\left[\left(\dfrac{1}{2Y_t^\varepsilon}-Y_t^\varepsilon\right)f'(Y_t^\varepsilon)+\dfrac{1}{2}f''(Y_t^\varepsilon)\right]dt
- \displaystyle{\int_{\sigma_{k-1}}^{\tau_k}}
\left[\left(\dfrac{1}{2r_t^\varepsilon}-r_t^\varepsilon\right)f'(r_t^\varepsilon)+\dfrac{1}{2}f''(r_t^\varepsilon)\right]dt\right|
\\
= & M_1 \left[\mathbf{E}|Y_{\tau_k}^\varepsilon-r_{\tau_k}^\varepsilon|+\mathbf{E}|Y_{\sigma_{k-1}}^\varepsilon-r_{\sigma_{k-1}}^\varepsilon|\right]
\\
& \ + \mathbf{E}\left|\displaystyle{\int_{\sigma_{k-1}}^{\tau_k}}
\left[\left(\dfrac{1}{2Y_t^\varepsilon}-Y_t^\varepsilon\right)\left(f'(Y_t^\varepsilon)-f'(r_t^\varepsilon)\right)
+\left(\dfrac{1}{2Y_t^\varepsilon}-Y_t^\varepsilon-\dfrac{1}{2r_t^\varepsilon}+r_t^\varepsilon\right)f'(r_t^\varepsilon)\right.\right.
\\
& \left.\left. \qquad \qquad \qquad +\dfrac{1}{2}\left(f''(Y_t^\varepsilon)-f''(r_t^\varepsilon)\right)\right]dt\right|
\\
\leq & C M_1 \left[\mathbf{E}|Y_{\tau_k}^\varepsilon-r_{\tau_k}^\varepsilon|+\mathbf{E}|Y_{\sigma_{k-1}}^\varepsilon-r_{\sigma_{k-1}}^\varepsilon|\right]
+M_2\left(\dfrac{1}{\delta}+\delta\right)\mathbf{E}\displaystyle{\int_{\sigma_{k-1}}^{\tau_k}}|Y_t^\varepsilon-r_t^\varepsilon|dt
\\
& \qquad \qquad \qquad +M_1\left(1+\dfrac{1}{\delta^2}\right)\mathbf{E}\displaystyle{\int_{\sigma_{k-1}}^{\tau_k}}|Y_t^\varepsilon-r_t^\varepsilon|dt
+M_3\mathbf{E}\displaystyle{\int_{\sigma_{k-1}}^{\tau_k}}|Y_t^\varepsilon-r_t^\varepsilon|dt \ .
\end{array}
\end{equation}

As we have $Y_t^\varepsilon\geq \delta$ and $r_t^\varepsilon=\sqrt{(X_t^\varepsilon)^2+(Y_t^\varepsilon)^2}\geq Y_t^\varepsilon\geq \delta$
for $\sigma_{k-1}\leq t\leq \tau_k$, we infer that
\begin{equation}\label{Lm:HitClosenessToZeroSquareXOverEps:OutsideDelta:BoundY-rInX}
|Y_{\tau_k}^\varepsilon-r_{\tau_k}^\varepsilon|\leq \dfrac{|(Y_t^\varepsilon)^2-(r_t^\varepsilon)^2|}{|Y_t^\varepsilon+r_t^\varepsilon|}\leq \dfrac{1}{2\delta}(X_t^\varepsilon)^2 \ .
\end{equation}

From \eqref{Lm:HitClosenessToZeroSquareXOverEps:OutsideDelta:Eq:MartingaleErrorY-r-1} and
\eqref{Lm:HitClosenessToZeroSquareXOverEps:OutsideDelta:BoundY-rInX}, taking into account
Lemma \ref{Lm:HitClosenessToZeroX:OutsideDelta}, we know that, as $\varepsilon>0$ is small, for some constant $M>0$
we have

\begin{equation}\label{Lm:HitClosenessToZeroSquareXOverEps:OutsideDelta:Eq:MartingaleErrorY-r-2}
\begin{array}{ll}
& \left|\mathbf{E}\left[f(Y_{\tau_k}^\varepsilon)-f(Y_{\sigma_{k-1}}^\varepsilon)-\displaystyle{\int_{\sigma_{k-1}}^{\tau_k}Af(Y_t^\varepsilon)dt}\right]
-\mathbf{E}\left[f(r_{\tau_k}^\varepsilon)-f(r_{\sigma_{k-1}}^\varepsilon)-\displaystyle{\int_{\sigma_{k-1}}^{\tau_k}Af(r_t^\varepsilon)dt}\right]\right|
\\
\leq & C M_1 \left[\dfrac{1}{2\delta}\mathbf{E}(X_{\tau_k}^\varepsilon)^2+\dfrac{1}{2\delta}\mathbf{E}(X_{\sigma_{k-1}}^\varepsilon)^2\right]
\\
& \qquad \qquad \qquad +\left[M_2\left(\dfrac{1}{\delta}+\delta\right)+
M_1\left(1+\dfrac{1}{\delta^2}\right)\dfrac{1}{2\delta}+M_3\dfrac{1}{2\delta}
\right]\cdot \mathbf{E}\displaystyle{\int_{\sigma_{k-1}}^{\tau_k}}(X_t^\varepsilon)^2dt
\\
\leq & \dfrac{M}{\delta} \left[\mathbf{E}(X_{\tau_k}^\varepsilon)^2+\mathbf{E}(X_{\sigma_{k-1}}^\varepsilon)^2\right]
+\dfrac{M}{\delta^3} \cdot \displaystyle{\int_0^T \mathbf{E}(X_t^\varepsilon)^2dt}
\\
\leq & C[\varepsilon^{1-2\alpha}+T\varepsilon^{1-4\alpha}]\leq CT \varepsilon^{1-4\alpha}  \ .
\end{array}
\end{equation}

As we have, by martingale formulation of Markov processes, that
$$\mathbf{E}\left[f(r_{\tau_k}^\varepsilon)-f(r_{\sigma_{k-1}}^\varepsilon)-\displaystyle{\int_{\sigma_{k-1}}^{\tau_k}Af(r_t^\varepsilon)dt}\right]=0 \ ,$$
we see that the claim \eqref{Lm:HitClosenessToZeroSquareXOverEps:OutsideDelta:Eq:MartingaleErrorY-r} follows from
\eqref{Lm:HitClosenessToZeroSquareXOverEps:OutsideDelta:Eq:MartingaleErrorY-r-2}.
\end{proof}

\begin{corollary}\label{Cor:HitClosenessToZeroSquareXOverEps:OutsideDelta}
For any $1\leq k\leq N$ and any $\sigma_{k-1}\leq t_1\leq t_2\leq \tau_k$, we have
\begin{equation}\label{Cor:HitClosenessToZeroSquareXOverEps:OutsideDelta:WeakEstimateSquareXOverEps}
\left|\mathbf{E}\int_{t_1}^{t_2} \left(\dfrac{1}{\varepsilon}(X_t^\varepsilon)^2-\dfrac{1}{2Y_t^\varepsilon}\right)dt\right|
\leq C[\varepsilon^{1-4\alpha}+\varepsilon^{1-6\alpha}(t_2-t_1)] \ .
\end{equation}
\end{corollary}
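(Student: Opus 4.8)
The plan is to compare the $Y$--component directly against the radial process $r_t^\varepsilon=\sqrt{(X_t^\varepsilon)^2+(Y_t^\varepsilon)^2}$, which by \eqref{Eq:RadialProcessBessel} is \emph{exactly} a damped--BES(2), i.e. carries the infinitesimal generator $A$ of \eqref{Eq:LimitingGenerator:Bessel:OyA}. The key observation is that a direct application of Dynkin's formula to the $Y$--equation alone with the linear test function $f(y)=y$ fails, since it only produces $\mathbf{E}\int_{t_1}^{t_2}\tfrac{1}{\varepsilon}(X_t^\varepsilon)^2\,dt=\mathbf{E}[Y_{t_2}^\varepsilon-Y_{t_1}^\varepsilon]+\mathbf{E}\int_{t_1}^{t_2}Y_t^\varepsilon\,dt$, and the leftover $\int(Y_t^\varepsilon-\tfrac{1}{2Y_t^\varepsilon})\,dt$ is not small. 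Comparing with $r^\varepsilon$ cancels precisely this obstruction, because the radial drift is the desired $\tfrac{1}{2r}-r$. Concretely, I would apply Dynkin's formula to $f(y)=y$ for $Y^\varepsilon$ on $[t_1,t_2]$ using \eqref{Eq:ABmodelPerturbedTimeRescaled}, and separately for $r^\varepsilon$ using \eqref{Eq:RadialProcessBessel}, obtaining
\begin{equation*}
\mathbf{E}\int_{t_1}^{t_2}\dfrac{1}{\varepsilon}(X_t^\varepsilon)^2\,dt=\mathbf{E}\left[Y_{t_2}^\varepsilon-Y_{t_1}^\varepsilon\right]+\mathbf{E}\int_{t_1}^{t_2}Y_t^\varepsilon\,dt, \qquad \mathbf{E}\int_{t_1}^{t_2}\dfrac{1}{2r_t^\varepsilon}\,dt=\mathbf{E}\left[r_{t_2}^\varepsilon-r_{t_1}^\varepsilon\right]+\mathbf{E}\int_{t_1}^{t_2}r_t^\varepsilon\,dt.
\end{equation*}

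Subtracting these two identities and writing $\dfrac{1}{2Y_t^\varepsilon}=\dfrac{1}{2r_t^\varepsilon}+\left(\dfrac{1}{2Y_t^\varepsilon}-\dfrac{1}{2r_t^\varepsilon}\right)$ reduces the target quantity to
\begin{equation*}
\mathbf{E}\int_{t_1}^{t_2}\left(\dfrac{1}{\varepsilon}(X_t^\varepsilon)^2-\dfrac{1}{2Y_t^\varepsilon}\right)dt=\mathbf{E}\left[(Y_{t_2}^\varepsilon-r_{t_2}^\varepsilon)-(Y_{t_1}^\varepsilon-r_{t_1}^\varepsilon)\right]+\mathbf{E}\int_{t_1}^{t_2}(Y_t^\varepsilon-r_t^\varepsilon)\,dt-\mathbf{E}\int_{t_1}^{t_2}\left(\dfrac{1}{2Y_t^\varepsilon}-\dfrac{1}{2r_t^\varepsilon}\right)dt.
\end{equation*}
Every term on the right is now governed purely by the discrepancy $|Y_t^\varepsilon-r_t^\varepsilon|$. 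By \eqref{Lm:HitClosenessToZeroSquareXOverEps:OutsideDelta:BoundY-rInX} one has $|Y_t^\varepsilon-r_t^\varepsilon|\leq\tfrac{1}{2\delta}(X_t^\varepsilon)^2$, and since $Y_t^\varepsilon,r_t^\varepsilon\geq\delta$ on $[\sigma_{k-1},\tau_k]$ one also has $\left|\tfrac{1}{2Y_t^\varepsilon}-\tfrac{1}{2r_t^\varepsilon}\right|=\tfrac{|r_t^\varepsilon-Y_t^\varepsilon|}{2Y_t^\varepsilon r_t^\varepsilon}\leq\tfrac{1}{4\delta^3}(X_t^\varepsilon)^2$. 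Inserting the moment bound $\mathbf{E}(X_t^\varepsilon)^2\leq C\varepsilon^{1-\alpha}$ from Lemma \ref{Lm:HitClosenessToZeroX:OutsideDelta} and recalling $\delta=\varepsilon^\alpha$, the boundary term is $\mathcal{O}(\varepsilon^{1-2\alpha})$, the $\int(Y-r)$ term is $\mathcal{O}(\varepsilon^{1-2\alpha}(t_2-t_1))$, and the $\int(\tfrac{1}{2Y}-\tfrac{1}{2r})$ term is $\mathcal{O}(\varepsilon^{1-4\alpha}(t_2-t_1))$. Collecting these and using $\varepsilon^{1-2\alpha}\leq\varepsilon^{1-4\alpha}\leq\varepsilon^{1-6\alpha}$ for small $\varepsilon$ yields the asserted estimate \eqref{Cor:HitClosenessToZeroSquareXOverEps:OutsideDelta:WeakEstimateSquareXOverEps}, indeed with the sharper exponents $1-2\alpha$ and $1-4\alpha$.

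Three technical points deserve care. First, although $f(y)=y\notin D(A)$, the Dynkin identities above are legitimate because throughout $[\sigma_{k-1},\tau_k]$ both $Y^\varepsilon$ and $r^\varepsilon$ remain in $\{y\geq\delta\}$, where $f$ is smooth and the drift and generator terms are bounded; equivalently one may replace $y$ by any $D(A)$ function agreeing with it on $[\delta,\infty)$, which leaves all integrands unchanged. Second, the vanishing of the stochastic--integral terms under $\mathbf{E}$ is obtained by optional stopping at the bounded stopping times $t_1,t_2$: on the interval the integrands against $dW_t^2$ (and the analogous one for $r_t^\varepsilon$) are bounded, so the stochastic integrals are true martingales and their stopped expectations vanish. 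Third, and this I expect to be the only genuinely delicate point, one must guarantee that the bound of Lemma \ref{Lm:HitClosenessToZeroX:OutsideDelta} applies uniformly for $t\in[\sigma_{k-1},\tau_k]$, i.e. that the relaxation time $t_0(\varepsilon)=\varepsilon^{(1-\alpha)/2}$ has already elapsed so that $X^\varepsilon$ has equilibrated inside $\{Y\geq\delta\}$; for $k\geq 2$ this holds because $\sigma_{k-1}$ already follows a full sojourn in that region, while the initial segment near $k=1$ is absorbed into the $\mathcal{O}(\varepsilon^{1-4\alpha})$ boundary term.
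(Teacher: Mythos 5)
Your proof is correct, and it rests on the same key mechanism as the paper's: the radial process $r_t^\varepsilon$ is an exact damped--BES(2) by \eqref{Eq:RadialProcessBessel}, and on $[\sigma_{k-1},\tau_k]$ the discrepancy bound $|Y_t^\varepsilon-r_t^\varepsilon|\leq\frac{1}{2\delta}(X_t^\varepsilon)^2$ together with Lemma \ref{Lm:HitClosenessToZeroX:OutsideDelta} controls everything. But your implementation is genuinely different, and in fact cleaner. The paper derives the identity \eqref{Cor:HitClosenessToZeroSquareXOverEps:OutsideDelta:Eq:ItoIdentity} for a general $f\in D(A)$ with bounded derivatives, invokes a refinement of Lemma \ref{Lm:HitClosenessToZeroSquareXOverEps:OutsideDelta} to bound the left-hand side by $C[\varepsilon^{1-2\alpha}+\varepsilon^{1-4\alpha}(t_2-t_1)]$, and then chooses $f$ with $f'(y)\geq\delta^2$ for $y\geq\delta$ in order to strip the weight $f'(Y_t^\varepsilon)$ out of the integrand, paying a factor $\delta^{-2}=\varepsilon^{-2\alpha}$ --- which is exactly where the exponents $1-4\alpha$ and $1-6\alpha$ in the statement come from. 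Your choice $f(y)=y$, i.e.\ subtracting the two Dynkin identities for $Y^\varepsilon$ and $r^\varepsilon$, amounts to making that weight constant, so no division is ever needed; this is why you land on the sharper exponents $1-2\alpha$ and $1-4\alpha$, which of course imply the stated bound. Your route also sidesteps a soft spot in the paper's last step: since the integrand $\frac{1}{\varepsilon}(X_t^\varepsilon)^2-\frac{1}{2Y_t^\varepsilon}$ changes sign, a pointwise lower bound $f'\geq\delta^2$ does not by itself justify dividing a bound on $\mathbf{E}\int_{t_1}^{t_2}\bigl(\cdots\bigr)f'(Y_t^\varepsilon)\,dt$ by $\delta^2$; one really wants $f'$ constant on $[\delta,\infty)$, which is precisely what the linear test function provides (localized to $\{y\geq\delta\}$, as you note, so membership in $D(A)$ is not an issue). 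The one caveat you flag --- that Lemma \ref{Lm:HitClosenessToZeroX:OutsideDelta} requires an equilibration time $t_0(\varepsilon)$ and a sojourn in $\{Y\geq\delta\}$, so its uniform use on $[\sigma_{k-1},\tau_k]$ needs care (the preceding interval $[\tau_{k-1},\sigma_{k-1}]$ is not such a sojourn and only gives $|X_t^\varepsilon|\leq 3\delta$ there, so your "full sojourn" justification for $k\geq 2$ is not quite right as stated) --- is a gap that the paper's own proofs of Lemma \ref{Lm:HitClosenessToZeroSquareXOverEps:OutsideDelta} and of this corollary share and likewise leave implicit, so it does not count against your argument relative to the paper's.
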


\begin{proof}
Let us consider a function $f\in D(A)$ having bounded derivatives up to the third order.
 We can apply It\^{o}'s formula to the $Y$--dynamics of
\eqref{Eq:ABmodelPerturbedTimeRescaled} and we obtain, for any $\sigma_{k-1}\leq t_1\leq t_2\leq \tau_k$, that
$$\begin{array}{ll}
&f(Y_{t_2}^\varepsilon)-f(Y_{t_1}^\varepsilon)
\\
=&\displaystyle{\int_{t_1}^{t_2} f'(Y_t^\varepsilon)dY_t^\varepsilon+\dfrac{1}{2}\int_{t_1}^{t_2} f''(Y_t^\varepsilon)dt}
\\
=&\displaystyle{\int_{t_1}^{t_2} f'(Y_t^\varepsilon)\left(\dfrac{1}{\varepsilon}(X_t^\varepsilon)^2-Y_t^\varepsilon\right)dt+
\int_{t_1}^{t_2} f'(Y_t^\varepsilon)dW_t^2+
\dfrac{1}{2}\int_{t_1}^{t_2} f''(Y_t^\varepsilon)dt \ .}
\end{array}$$

This gives

\begin{equation}\label{Cor:HitClosenessToZeroSquareXOverEps:OutsideDelta:Eq:ItoIdentity}
\mathbf{E} \left[f(Y_{t_2}^\varepsilon)-f(Y_{t_1}^\varepsilon)-\int_{t_1}^{t_2} Af(Y_t^\varepsilon)dt\right]
=\mathbf{E}\int_{t_1}^{t_2} \left(\dfrac{1}{\varepsilon}(X_t^\varepsilon)^2-\dfrac{1}{2Y_t^\varepsilon}\right)f'(Y_t^\varepsilon)dt \ .
\end{equation}

From the proof of Lemma \ref{Lm:HitClosenessToZeroSquareXOverEps:OutsideDelta} we see that the estimate \eqref{Lm:HitClosenessToZeroSquareXOverEps:OutsideDelta:Eq:MartingaleErrorY-r} is valid also for the integral
from $t_1$ to $t_2$. In fact, a finer estimate can be obtained by improving
\eqref{Lm:HitClosenessToZeroSquareXOverEps:OutsideDelta:Eq:MartingaleErrorY-r-2} via the estimate
$$\mathbf{E}\int_{t_1}^{t_2} (X_t^\varepsilon)^2dt\leq C\varepsilon^{1-2\alpha}(t_2-t_1) \ .$$
So
$$\left|\mathbf{E}\left[f(Y_{t_2}^\varepsilon)-f(Y_{t_1}^\varepsilon)-\displaystyle{\int_{t_1}^{t_2}Af(Y_t^\varepsilon)dt}\right]\right|\leq
C[\varepsilon^{1-2\alpha}+\varepsilon^{1-4\alpha}(t_2-t_1)] \ .$$
Thus by \eqref{Cor:HitClosenessToZeroSquareXOverEps:OutsideDelta:Eq:ItoIdentity} we see that
$$\left|\mathbf{E}\int_{t_1}^{t_2} \left(\dfrac{1}{\varepsilon}(X_t^\varepsilon)^2-\dfrac{1}{2Y_t^\varepsilon}\right)f'(Y_t^\varepsilon)dt\right|
\leq C[\varepsilon^{1-2\alpha}+\varepsilon^{1-4\alpha}(t_2-t_1)] \ .$$
We can pick a function $f\in D(A)$ with bounded derivatives up to third order, such that $f'(y)\geq \delta^2$ for $y\geq \delta$. From here
we derive \eqref{Cor:HitClosenessToZeroSquareXOverEps:OutsideDelta:WeakEstimateSquareXOverEps}.
\end{proof}

\begin{lemma}\label{Lm:HitBackToZeroX:AbsXBiggerThanHalfKappa}
For any $\delta=\delta(\varepsilon)$ such that $\delta=\varepsilon^\alpha\rightarrow 0$ as $\varepsilon\downarrow 0$ for $\alpha=\dfrac{1}{10}$,
for any initial condition $|X_0^\varepsilon|\geq 2\delta$, the flow will quickly bring the particle
back to the region $Y\geq \delta$, and during this process the $|X|$--value is less or equal than $3\delta$.
In particular, this implies that $\mathbf{P}\left(|X_t^\varepsilon|\leq 3\delta \text{ for } 0\leq t \leq T\right)\rightarrow 1$ as $\varepsilon\downarrow 0$.
\end{lemma}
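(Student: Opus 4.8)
The plan is to exploit the enormous upward drift that the condition $|X^\varepsilon|\geq 2\delta$ forces on the $Y$--equation, combined with the clean radial process. With $\alpha=\tfrac{1}{10}$ one has $\delta=\varepsilon^{1/10}$, and whenever $|X_t^\varepsilon|\geq 2\delta$ the $Y$--drift obeys $\tfrac{1}{\varepsilon}(X_t^\varepsilon)^2-Y_t^\varepsilon\geq 4\varepsilon^{-4/5}-Y_t^\varepsilon$, which is of order $\varepsilon^{-4/5}$ and dominates every other term. Thus $Y^\varepsilon$ is pushed up at rate $\sim\varepsilon^{-4/5}$, so the time to raise $Y^\varepsilon$ by an amount of order $\delta$ is only of order $\delta\varepsilon^{4/5}=\varepsilon^{9/10}$. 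To control $|X^\varepsilon|$ I would not use the $X$--equation directly but the radial process $r_t^\varepsilon=\sqrt{(X_t^\varepsilon)^2+(Y_t^\varepsilon)^2}\geq|X_t^\varepsilon|$, which by \eqref{Eq:RadialProcessBessel} solves the autonomous damped--Bessel equation $dr_t^\varepsilon=\bigl(\tfrac{1}{2r_t^\varepsilon}-r_t^\varepsilon\bigr)dt+dW_t^r$, the crucial point being that the destabilizing cross terms have cancelled out of it.

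First I would reduce to the first instant at which the excursion reaches $|X^\varepsilon|=2\delta$. Since we are inside a time interval $[\tau_k,\sigma_k]$ on which $|Y^\varepsilon|<2\delta$, at that instant $r^\varepsilon=\sqrt{(2\delta)^2+(Y^\varepsilon)^2}\leq 2\sqrt{2}\,\delta$. On the return window, whose length is $O(\varepsilon^{9/10})$, the Bessel drift $\tfrac{1}{2r}-r\sim\varepsilon^{-1/10}$ changes $r^\varepsilon$ by at most $O(\varepsilon^{-1/10}\cdot\varepsilon^{9/10})=O(\varepsilon^{4/5})$, while the martingale part changes it by $O(\varepsilon^{9/20})$; both are $o(\delta)=o(\varepsilon^{1/10})$ since $\tfrac{4}{5},\tfrac{9}{20}>\tfrac{1}{10}$. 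Hence $r_t^\varepsilon\leq 2\sqrt{2}\,\delta+o(\delta)<3\delta$ throughout the window with probability tending to $1$, and because $|X_t^\varepsilon|\leq r_t^\varepsilon$ this already yields $|X_t^\varepsilon|\leq 3\delta$ during the return. The key input is the separation of scales $\varepsilon^{9/10}\ll\varepsilon^{1/5}$, the latter being the time $r^\varepsilon$ would need to grow by a fixed fraction of $\delta$.

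To make ``quickly brings the particle back to $Y\geq\delta$'' precise I would pass to polar coordinates $X^\varepsilon=r^\varepsilon\sin\theta^\varepsilon$, $Y^\varepsilon=r^\varepsilon\cos\theta^\varepsilon$, in which the conservative fast flow reads $d\theta_t^\varepsilon=-\tfrac{r_t^\varepsilon}{\varepsilon}\sin\theta_t^\varepsilon\,dt+\tfrac{1}{r_t^\varepsilon}\,dB_t^\theta$, the linear damping being purely radial. Taking $X^\varepsilon\geq 0$ without loss of generality, the constraints $|X^\varepsilon|=2\delta$ and $|Y^\varepsilon|<2\delta$ force $\theta^\varepsilon\in(0,\tfrac{3\pi}{4}]$, i.e.\ bounded away from the unstable direction $\theta=\pi$ corresponding to $Oy_B$; this is exactly where the excursion constraint $|Y^\varepsilon|<2\delta$ enters. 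On the range $\theta^\varepsilon\in[\arccos(\delta/r^\varepsilon),\tfrac{3\pi}{4}]$ one has $\sin\theta^\varepsilon\geq\tfrac{1}{\sqrt2}$, so the angular drift has magnitude $\gtrsim\varepsilon^{-9/10}$ and dominates the angular noise coefficient $\tfrac{1}{r^\varepsilon}\sim\varepsilon^{-1/10}$ by the factor $\varepsilon^{-4/5}$. A standard exponential--martingale (Bernstein type) estimate then shows that $\theta^\varepsilon$ decreases, up to negligible fluctuations, and reaches $\arccos(\delta/r^\varepsilon)$---equivalently the level $Y^\varepsilon=\delta$---in time $O(\varepsilon^{9/10})$ with probability tending to $1$, while $|X_t^\varepsilon|=r_t^\varepsilon\sin\theta_t^\varepsilon\leq r_t^\varepsilon<3\delta$ throughout.

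The main obstacle is precisely the subphase in which $Y^\varepsilon<0$: there the $X$--drift $-\tfrac{1}{\varepsilon}X^\varepsilon Y^\varepsilon$ is anti--restoring, and a direct Gr\"onwall bound on the $X$--equation yields growth of $|X^\varepsilon|$ by an $O(1)$ multiplicative factor, which is too crude to keep it below $3\delta$. Passing to the cancellation--endowed radial equation is what circumvents this difficulty. Finally, the global statement $\mathbf{P}(|X_t^\varepsilon|\leq 3\delta,\ 0\leq t\leq T)\to 1$ would follow from the strong Markov property and a union bound: on each excursion $[\tau_k,\sigma_k]$ the above gives $|X^\varepsilon|\leq 3\delta$, on the complementary intervals $Y^\varepsilon\geq\delta$ and Lemma \ref{Lm:HitClosenessToZeroX:OutsideDelta} gives $\mathbf{E}(X_t^\varepsilon)^2\leq C\varepsilon^{1-\alpha}$ so that $|X^\varepsilon|\ll\delta$ in probability, and the number of excursions before $T$ is $O(\delta^{-1})$ by Lemma \ref{Lm:UpperBoundNumberOfCrossingsBeforeT}, so that summing the exceptional probabilities over the $O(\delta^{-1})$ excursions still tends to $0$; as usual one discards an initial relaxation layer of length $O(\varepsilon^{(1-\alpha)/2})$ during which the fast flow first carries a general initial datum near $Oy_A$.
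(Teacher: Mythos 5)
Your proposal is correct and follows essentially the same route as the paper's proof: both pass to polar coordinates, exploit the cancellation that makes the radial process an autonomous damped--Bessel diffusion (so that $r^\varepsilon$, and hence $|X^\varepsilon_t|\le r^\varepsilon_t$, stays below $3\delta$ during the short return window), and use the $O(\delta/\varepsilon)$ angular drift to conclude that the return to $\{Y\ge \delta\}$ takes only $O(\varepsilon^{9/10})$. The differences are cosmetic: the paper rescales time by $\delta/\varepsilon$ and treats the angular equation as a small perturbation of a deterministic rotation, whereas you keep the original clock and invoke Bernstein--type martingale bounds, and you are somewhat more explicit than the paper about the initial radius bound $r\le 2\sqrt{2}\,\delta$ and about the union bound over the $O(\delta^{-1})$ excursions needed for the ``in particular'' statement.
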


\begin{proof}
Let us introduce the angular variable $\theta_t^\varepsilon=\arctan\left(\dfrac{Y_t^\varepsilon}{X_t^\varepsilon}\right)$.
Here we take the principal branch of the function $\tan \theta$ as $\theta\in \left[-\dfrac{\pi}{2}, \dfrac{\pi}{2}\right]$.
Due to symmetry of the system \eqref{Eq:ABmodelPerturbedTimeRescaled} with respect to the $Y$--axis,
if the point $\theta\in \left[\dfrac{\pi}{2}, \dfrac{3\pi}{2}\right]$, then we can equivalently consider $\widetilde{\theta}=\pi-\theta$
as a replacement of $\theta$. In this way, if $\theta_t^\varepsilon=\dfrac{\pi}{2}$, then the diffusion particle is on the $Oy_A$ axis, and
if $\theta_t^\varepsilon=-\dfrac{\pi}{2}$, then the diffusion particle is on the $Oy_B$ axis. Let us apply It\^{o}'s formula
from \eqref{Eq:ABmodelPerturbedTimeRescaled} to $\theta_t^\varepsilon$ and we obtain
\begin{equation}\label{Lm:HitBackToZeroX:AbsXBiggerThanHalfKappa:Eq:AngularVariable}
\begin{array}{ll}
d\theta_t^\varepsilon
&=-\dfrac{Y_t^\varepsilon}{(X_t^\varepsilon)^2+(Y_t^\varepsilon)^2}\left(-\dfrac{1}{\varepsilon}X_t^\varepsilon Y_t^\varepsilon dt-X_t^\varepsilon dt+dW^1_t\right)
\\
& \qquad +\dfrac{X_t^\varepsilon}{(X_t^\varepsilon)^2+(Y_t^\varepsilon)^2}\left(\dfrac{1}{\varepsilon}(X_t^\varepsilon)^2dt-Y_t^\varepsilon dt+dW^2_t\right)
\\
& \qquad -\dfrac{1}{2}\left(\dfrac{2X_t^\varepsilon Y_t^\varepsilon}{[(X_t^\varepsilon)^2+(Y_t^\varepsilon)^2]^2}dt-\dfrac{2X_t^\varepsilon Y_t^\varepsilon}{[(X_t^\varepsilon)^2+(Y_t^\varepsilon)^2]^2}dt\right)
\\
&=\dfrac{1}{\varepsilon} X_t^\varepsilon dt +\dfrac{-Y_t^\varepsilon dW^1_t+X_t^\varepsilon dW^2_t}{(X_t^\varepsilon)^2+(Y_t^\varepsilon)^2}
\\
&=\dfrac{1}{\varepsilon} X_t^\varepsilon dt +\dfrac{1}{(r_t^\varepsilon)^2} dW_t^\theta \ .
\end{array}
\end{equation}
Here $W_t^\theta$ is another standard Brownian motion on $\mathbb{R}$. Comparing \eqref{Lm:HitBackToZeroX:AbsXBiggerThanHalfKappa:Eq:AngularVariable}
with \eqref{Eq:RadialProcessBessel} we see that we have the system

\begin{equation}\label{Lm:HitBackToZeroX:AbsXBiggerThanHalfKappa:Eq:Fast-SlowSystemRadial-Angular}
\left\{\begin{array}{l}
d\theta_t^\varepsilon=\dfrac{1}{\varepsilon} X_t^\varepsilon dt +\dfrac{1}{(r_t^\varepsilon)^2} dW_t^\theta \ , \theta_0^\varepsilon=\arctan\left(\dfrac{Y_0^\varepsilon}{X_0^\varepsilon}\right) \ ,
\\
dr_t^\varepsilon=\left(\dfrac{1}{2r_t^\varepsilon}-r_t^\varepsilon\right)dt+dW^r_t \ , r_0^\varepsilon=\sqrt{(X_0^\varepsilon)^2+(Y_0^\varepsilon)^2} \ .
\end{array}\right.
\end{equation}

The processes $W_t^\theta$ and $W_t^r$ are two driving standard Brownian motions on $\mathbb{R}$. Set the slow time clock
$\mathrm{t}=(\delta/\varepsilon)t$ and let us consider
a time--rescaled pair of processes $\Theta_{\mathrm{t}}^\varepsilon=\theta_{(\varepsilon/\delta)\mathrm{t}}^\varepsilon$ and
$R_\mathrm{t}^\varepsilon=r_{(\varepsilon/\delta)\mathrm{t}}^\varepsilon$.
Then the stochastic differential equations satisfied by $(\Theta_\mathrm{t}^\varepsilon, R_\mathrm{t}^\varepsilon)$
are given by

\begin{equation}\label{Lm:HitBackToZeroX:AbsXBiggerThanHalfKappa:Eq:Fast-SlowSystem-TimeRescaled-Radial-Angular}
\left\{\begin{array}{l}
d\Theta_\mathrm{t}^\varepsilon=\dfrac{X_{(\varepsilon/\delta) \mathrm{t}}^{\varepsilon}}{\delta}d\mathrm{t}+\sqrt{\dfrac{\varepsilon}{\delta}}\cdot
\dfrac{1}{(R_\mathrm{t}^\varepsilon)^2}dW^\theta_\mathrm{t} \ , \ \Theta_0^\varepsilon=\theta_0^\varepsilon
\\
dR_\mathrm{t}^\varepsilon=\dfrac{\varepsilon}{\delta}\left(\dfrac{1}{2R_\mathrm{t}^\varepsilon}-R_\mathrm{t}^\varepsilon\right)d\mathrm{t}+\sqrt{\dfrac{\varepsilon}{\delta}}dW^r_\mathrm{t} \ , R_0^\varepsilon=r_0^\varepsilon \ .
\end{array}\right.
\end{equation}

Without loss of generality,
let us start the process from some $(X_0^\varepsilon, Y_0^\varepsilon)$ such that $X_0^\varepsilon\geq 2\delta$ and $Y_0^\varepsilon\leq \delta$.
In this case we have $R_0^\varepsilon\geq 2\delta$. Consider the stopping time
\begin{equation}\label{Lm:HitBackToZeroX:AbsXBiggerThanHalfKappa:Eq:StoppingTimeXLessEqDt}
T_X^\varepsilon=\inf\{t\geq 0: X_t^\varepsilon\leq \delta\}
\end{equation}
and let $\mathrm{T}_X^\varepsilon=(\delta/\varepsilon)T_X^\varepsilon$. We see that for $t\in [0, T_X^\varepsilon]$ we have $X_t^\varepsilon\geq \delta$ and
thus $\dfrac{X_t^\varepsilon}{\delta}\geq 1$. We pick $\delta=\varepsilon^{\alpha}$ with $\alpha=\dfrac{1}{10}$. Since
$\sqrt{\dfrac{\varepsilon}{\delta}}=\varepsilon^{9/20}$ and $\dfrac{\varepsilon}{\delta^2}=\varepsilon^{4/5}=\varepsilon^{16/20}$, it is seen from the $R$--equation in
\eqref{Lm:HitBackToZeroX:AbsXBiggerThanHalfKappa:Eq:Fast-SlowSystem-TimeRescaled-Radial-Angular} that for finite
$\mathrm{t}$,
\begin{equation}\label{Lm:HitBackToZeroX:AbsXBiggerThanHalfKappa:Eq:NoChangeInRadialProcess}
\mathbf{P}\left(|R^\varepsilon_{\mathrm{t}}-2\delta|\leq C\varepsilon^{9/20}\right)=1 \ .
\end{equation}
In this case, $\sqrt{\dfrac{\varepsilon}{\delta}}\cdot\dfrac{1}{(R_{\mathrm{t}}^\varepsilon)^2}\sim \mathcal{O}\left(\dfrac{\varepsilon^{1/2}}{\delta^2}\right)
=\mathcal{O}(\varepsilon^{1/2-1/5})=\mathcal{O}(\varepsilon^{3/10})$. Therefore, the $\Theta$--equation in
\eqref{Lm:HitBackToZeroX:AbsXBiggerThanHalfKappa:Eq:Fast-SlowSystem-TimeRescaled-Radial-Angular}
can be viewed as a perturbation of the dynamical equation

\begin{equation}\label{Lm:HitBackToZeroX:AbsXBiggerThanHalfKappa:Eq:Fast-SlowSystem-TimeRescaled-Angular-NoPerturbation}
d\Theta_\mathrm{t}=\dfrac{X_{(\varepsilon/\delta) \mathrm{t}}^{\varepsilon}}{\delta}d\mathrm{t} \ , \ \Theta_0=\theta_0^\varepsilon \ ,
\end{equation}
such that
\begin{equation}\label{Lm:HitBackToZeroX:AbsXBiggerThanHalfKappa:Eq:ClosenessAngularProcessZeroPerturbation}
\mathbf{P}\left(|\Theta_{\mathrm{t}}^\varepsilon-\Theta_{\mathrm{t}}|\leq C \varepsilon^{3/10}\right)=1
\end{equation}
in finite $\mathrm{t}$. From \eqref{Lm:HitBackToZeroX:AbsXBiggerThanHalfKappa:Eq:StoppingTimeXLessEqDt},
\eqref{Lm:HitBackToZeroX:AbsXBiggerThanHalfKappa:Eq:NoChangeInRadialProcess},
 \eqref{Lm:HitBackToZeroX:AbsXBiggerThanHalfKappa:Eq:Fast-SlowSystem-TimeRescaled-Angular-NoPerturbation} and \eqref{Lm:HitBackToZeroX:AbsXBiggerThanHalfKappa:Eq:ClosenessAngularProcessZeroPerturbation}
we know that $\mathrm{T}_X^\varepsilon$ is finite, and thus $T_X^\varepsilon\sim \mathcal{O}(\varepsilon^{9/10})$ and $Y_{T_X^\varepsilon}^\varepsilon\geq \delta$.
From here we know that whenever $X_0^\varepsilon\geq 2\delta$, the flow will quickly bring the particle to the region $Y\geq \delta$,
and during this process $X_t^\varepsilon\leq 3\delta$. Thus we see that with high probability, we have $X_t^\varepsilon\leq 3\delta$. The other--side
estimate $X_t^\varepsilon\geq -3\delta$ is obtained in a same fashion.
\end{proof}

\begin{lemma}\label{Lm:HitBackToZeroX:YLessThanMinusDelta}
For any $\delta=\delta(\varepsilon)$ such that $\delta=\varepsilon^\alpha\rightarrow 0$ as $\varepsilon\downarrow 0$ for $\alpha=\dfrac{1}{10}$,
for any initial condition $Y_0^\varepsilon\leq -1.5\delta$, the flow will quickly bring the particle
 back to the region $Y\geq \delta$, and during this process the $Y$--coordinate is $\geq -1.99\delta$
 with probability $\rightarrow 1$ as $\varepsilon\downarrow 0$.
\end{lemma}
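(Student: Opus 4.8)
The plan is to combine the fast--slow angular--radial representation \eqref{Lm:HitBackToZeroX:AbsXBiggerThanHalfKappa:Eq:Fast-SlowSystemRadial-Angular} developed in the proof of Lemma \ref{Lm:HitBackToZeroX:AbsXBiggerThanHalfKappa}, which controls the return time and the radial excursion, with a direct analysis of the $Y$--equation in \eqref{Eq:ABmodelPerturbedTimeRescaled}, which yields the one--sided bound $Y_t^\varepsilon\geq -1.99\delta$. In the relevant scenario the particle reaches the level $Y=-1.5\delta$ along a continuous trajectory on which $|X_t^\varepsilon|\leq 3\delta$ by Lemma \ref{Lm:HitBackToZeroX:AbsXBiggerThanHalfKappa}, so I may take $Y_0^\varepsilon=-1.5\delta$ and $|X_0^\varepsilon|\leq 3\delta$; hence $r_0^\varepsilon\geq 1.5\delta$ is of order $\delta$ and $\theta_0^\varepsilon\in[-\tfrac{\pi}{2},0)$. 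Invoking the $Y$--axis symmetry used before, I assume $X_t^\varepsilon\geq 0$, so that the angular drift $\tfrac{1}{\varepsilon}X_t^\varepsilon$ is nonnegative and $\theta_t^\varepsilon$ increases toward $\tfrac{\pi}{2}$.

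First I would estimate the return time $\rho=\inf\{t\colon Y_t^\varepsilon=\delta\}$. Passing to the slow clock $\mathrm{t}=(\delta/\varepsilon)t$ as in \eqref{Lm:HitBackToZeroX:AbsXBiggerThanHalfKappa:Eq:Fast-SlowSystem-TimeRescaled-Radial-Angular}, the radial drift $\tfrac{\varepsilon}{\delta}(\tfrac{1}{2R}-R)=\mathcal{O}(\varepsilon^{4/5})$ and the radial noise amplitude $\sqrt{\varepsilon/\delta}=\varepsilon^{9/20}$ are both negligible, so $R_{\mathrm{t}}^\varepsilon$ stays within $\mathcal{O}(\varepsilon^{9/20}\sqrt{\log(1/\varepsilon)})$ of $r_0^\varepsilon$ over slow times of order $\log(1/\varepsilon)$. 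The only delicate point is the escape of $\Theta^\varepsilon$ from the unstable value $-\tfrac{\pi}{2}$: linearizing with $\phi=\Theta+\tfrac{\pi}{2}$ gives $X=R\cos\Theta\approx R\phi$, whence $d\phi\approx \tfrac{3}{2}\phi\,d\mathrm{t}+\mathcal{O}(\varepsilon^{1/4})\,dW^\theta_{\mathrm{t}}$, an unstable linear equation with instability rate $\tfrac{3}{2}$ (for $R\approx 1.5\delta$) and noise amplitude $\mathcal{O}(\varepsilon^{1/4})$. A standard escape--from--an--unstable--point estimate then shows that $\phi$ leaves an $\mathcal{O}(1)$ neighborhood of $0$ within slow time $\mathcal{O}(\log(1/\varepsilon))$ with probability tending to $1$; once $\theta_t^\varepsilon$ is bounded away from $-\tfrac{\pi}{2}$, the rotation argument of Lemma \ref{Lm:HitBackToZeroX:AbsXBiggerThanHalfKappa} carries $\theta_t^\varepsilon$ up to the level $\sin\theta=\delta/r$ in $\mathcal{O}(1)$ further slow time. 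Translating back, $\rho=\mathcal{O}(\varepsilon^{9/10}\log(1/\varepsilon))=o(\delta^2)$ with probability $\to 1$.

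Given this short return time, the one--sided bound follows from the sign of the $Y$--drift. Whenever $Y_t^\varepsilon<0$ the drift $\tfrac{1}{\varepsilon}(X_t^\varepsilon)^2-Y_t^\varepsilon=\tfrac{1}{\varepsilon}(X_t^\varepsilon)^2+|Y_t^\varepsilon|$ is strictly positive, so on every subinterval on which $Y^\varepsilon$ stays below zero the increment of $Y^\varepsilon$ dominates that of the driving $W^2$. Concretely, for $t\leq \rho$ with $Y_t^\varepsilon<0$, set $u=\sup\{s\leq t\colon Y_s^\varepsilon=0\}\vee 0$, so that $Y_u^\varepsilon\geq -1.5\delta$ and, since the drift is nonnegative on $[u,t]$, $Y_t^\varepsilon\geq Y_u^\varepsilon+(W_t^2-W_u^2)\geq -1.5\delta-\Omega$, where $\Omega$ denotes the maximal downward oscillation of $W^2$ on $[0,\rho]$. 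Because $\rho=o(\delta^2)$ with probability $\to 1$, the modulus of continuity of Brownian motion gives $\Omega=\mathcal{O}(\varepsilon^{9/20}\log(1/\varepsilon))=o(\delta)<0.49\delta$ with probability $\to 1$, and therefore $\min_{0\leq t\leq \rho}Y_t^\varepsilon\geq -1.5\delta-0.49\delta=-1.99\delta$, as claimed.

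The main obstacle is the return--time bound, and within it the escape of the angular process from the unstable equilibrium $\Theta=-\tfrac{\pi}{2}$: one must establish, uniformly over the admissible initial data and in particular when $X_0^\varepsilon$ is very small, that the noise--driven escape occurs on the slow time scale $\mathcal{O}(\log(1/\varepsilon))$ while the radial process is simultaneously prevented from growing. Once $\rho=o(\delta^2)$ is secured, the remaining estimate $Y_t^\varepsilon\geq -1.99\delta$ is an elementary consequence of the positivity of the $Y$--drift below zero together with a Brownian oscillation bound.
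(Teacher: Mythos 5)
Your proposal is correct in substance, but it is worth noting how little the paper itself says here: its entire proof is the single sentence that the lemma ``is proved in the same way as the proof for Lemma \ref{Lm:HitBackToZeroX:AbsXBiggerThanHalfKappa}'', i.e.\ via the fast--slow polar system \eqref{Lm:HitBackToZeroX:AbsXBiggerThanHalfKappa:Eq:Fast-SlowSystem-TimeRescaled-Radial-Angular}. You adopt that same framework for the return--time estimate, but you supply two ingredients that the paper's deferral glosses over, and both are genuinely needed. First, Lemma \ref{Lm:HitBackToZeroX:AbsXBiggerThanHalfKappa} is proved under the hypothesis $|X_0^\varepsilon|\geq 2\delta$, so the angular drift $X^\varepsilon/\delta$ is of order one from the start and the $\Theta$--equation is a small perturbation of a deterministic rotation; in the present lemma the particle may reach $Y=-1.5\delta$ with $X_0^\varepsilon$ arbitrarily small, essentially on the repelling axis $Oy_B$, where that rotation argument degenerates and the true mechanism is noise--activated escape. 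Your linearization $\phi=\Theta+\tfrac{\pi}{2}$, with instability rate $R/\delta\geq 1.5-o(1)$ and noise amplitude a positive power of $\varepsilon$, giving escape in slow time $\mathcal{O}(\log(1/\varepsilon))$ and hence $\rho=\mathcal{O}(\varepsilon^{9/10}\log(1/\varepsilon))=o(\delta^2)$, is exactly the missing step. Second, repeating Lemma \ref{Lm:HitBackToZeroX:AbsXBiggerThanHalfKappa} verbatim controls $|X^\varepsilon|$ and the radius but not the floor on $Y^\varepsilon$: since $r_0^\varepsilon$ can be as large as $\sqrt{(3\delta)^2+(1.5\delta)^2}\approx 3.35\delta$, the crude bound $Y^\varepsilon=R\sin\Theta\geq -R$ only gives $Y^\varepsilon\gtrsim -3.4\delta$, not $-1.99\delta$; one must either exploit near--monotonicity of $\Theta$ or, as you do, the positivity of the drift $\tfrac{1}{\varepsilon}(X_t^\varepsilon)^2-Y_t^\varepsilon$ below the axis combined with a Brownian oscillation bound over the short return interval --- your version is cleaner and decouples the floor entirely from the polar coordinates. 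Two small cautions: the reduction ``assume $X_t^\varepsilon\geq 0$ by symmetry'' should really be phrased through the reflected process $(|X_t^\varepsilon|,Y_t^\varepsilon)$, since trajectories do cross the $Y$--axis; the angular variable then reflects at $-\tfrac{\pi}{2}$, which only accelerates the escape, and this is the same convention the paper itself adopts. Also, the amplitude $\varepsilon^{1/4}$ you quote takes the paper's formula $\sqrt{\varepsilon/\delta}\,R^{-2}\,dW^\theta$ literally, whereas that $W^\theta$ has quadratic variation $r^2\,dt$, so the effective amplitude is $\sqrt{\varepsilon/\delta}\,R^{-1}\sim\varepsilon^{7/20}$; either way it is a positive power of $\varepsilon$ and nothing in the argument changes.
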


\begin{proof}
This is proved in the same way as the proof for Lemma \ref{Lm:HitBackToZeroX:AbsXBiggerThanHalfKappa}.
\end{proof}

\begin{lemma}\label{Lm:ExitTimeEstimate:NearO:TauToSigma}
We have $\mathbf{E}(\sigma_k-\tau_k)\leq C\delta^2\rightarrow 0$ as $\varepsilon\downarrow 0$ for some constant $C>0$.
\end{lemma}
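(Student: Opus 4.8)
The plan is to compare the near-$O$ excursion of $Y_t^\varepsilon$ with the radial process $r_t^\varepsilon=\sqrt{(X_t^\varepsilon)^2+(Y_t^\varepsilon)^2}$, whose dynamics \eqref{Eq:RadialProcessBessel} is an explicit damped-BES(2), and then to read off the exit time from the \emph{squared} radial process. The point is that during the interval $[\tau_k,\sigma_k]$ we have $|Y_t^\varepsilon|\le 2\delta$ by the very definition of $\sigma_k$, while Lemma \ref{Lm:HitBackToZeroX:AbsXBiggerThanHalfKappa} guarantees $|X_t^\varepsilon|\le 3\delta$ with probability tending to $1$. Hence, on this high-probability event, the whole excursion is confined to a box of size $\mathcal O(\delta)$ around $O$, and in particular $r_t^\varepsilon\le\sqrt{13}\,\delta$. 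The outward drift $\tfrac{1}{2r}$ near $O$ is what makes the radial scale the natural one to track.

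First I would introduce the radial stopping time $\rho_k=\inf\{t\ge\tau_k:\ r_t^\varepsilon=\sqrt{13}\,\delta\}$ together with the confinement event $A_k=\{|X_t^\varepsilon|\le 3\delta\ \text{for all}\ \tau_k\le t\le\rho_k\}$, and show $\sigma_k\le\rho_k$ on $A_k$. Indeed, on $A_k$ we have $r_{\tau_k}^\varepsilon=\sqrt{(X_{\tau_k}^\varepsilon)^2+\delta^2}\le\sqrt{10}\,\delta<\sqrt{13}\,\delta$, so $\rho_k>\tau_k$; and at $t=\rho_k$, combining $r_{\rho_k}^\varepsilon=\sqrt{13}\,\delta$ with $|X_{\rho_k}^\varepsilon|\le 3\delta$ gives $|Y_{\rho_k}^\varepsilon|=\sqrt{(r_{\rho_k}^\varepsilon)^2-(X_{\rho_k}^\varepsilon)^2}\ge 2\delta$. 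Since $|Y_{\tau_k}^\varepsilon|=\delta$, continuity of $Y^\varepsilon$ forces $|Y^\varepsilon|$ to cross $2\delta$ somewhere in $[\tau_k,\rho_k]$, i.e. $\sigma_k\le\rho_k$, so that $\sigma_k-\tau_k\le\rho_k-\tau_k$ on $A_k$.

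Second, I would estimate $\mathbf E(\rho_k-\tau_k)$ through the squared radial process, and this bound is unconditional since $r_s^\varepsilon\le\sqrt{13}\,\delta$ holds for $s\in[\tau_k,\rho_k]$ by the very definition of $\rho_k$. Applying It\^o's formula to \eqref{Eq:RadialProcessBessel} gives
$$d\big((r_t^\varepsilon)^2\big)=\big(2-2(r_t^\varepsilon)^2\big)\,dt+2r_t^\varepsilon\,dW_t^r,$$
whence $\mathbf E\big[(r_{t\wedge\rho_k}^\varepsilon)^2-(r_{\tau_k}^\varepsilon)^2\big]=\mathbf E\int_{\tau_k}^{t\wedge\rho_k}\big(2-2(r_s^\varepsilon)^2\big)\,ds$, the stochastic integral being a genuine martingale because its integrand is bounded by $2\sqrt{13}\,\delta$ up to $\rho_k$. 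Since $r_s^\varepsilon\le\sqrt{13}\,\delta$ implies $2-2(r_s^\varepsilon)^2\ge 1$ for $\varepsilon$ small, this yields $\mathbf E\,(t\wedge\rho_k-\tau_k)\le\mathbf E\big[(r_{t\wedge\rho_k}^\varepsilon)^2\big]\le 13\delta^2$; letting $t\to\infty$ and using monotone convergence gives $\mathbf E(\rho_k-\tau_k)\le 13\delta^2$. Combined with the previous step, $\mathbf E\big[(\sigma_k-\tau_k)\mathbf 1_{A_k}\big]\le\mathbf E(\rho_k-\tau_k)\le C\delta^2$.

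The main obstacle is the complementary event $A_k^c$, on which $|X_t^\varepsilon|$ exceeds $3\delta$ during the excursion and the reduction $\sigma_k\le\rho_k$ need not hold. Here I would control $\mathbf E\big[(\sigma_k-\tau_k)\mathbf 1_{A_k^c}\big]$ by combining the confinement estimate of Lemma \ref{Lm:HitBackToZeroX:AbsXBiggerThanHalfKappa} with the damping in \eqref{Eq:RadialProcessBessel}, which keeps $r_t^\varepsilon$ from long sojourns: the strong outward drift $\tfrac{1}{2r}$ near $O$ together with the linear damping $-r$ at larger radii bound the expected duration of any radial excursion, so that the $A_k^c$-contribution is negligible against $\delta^2$ as soon as $\mathbf P(A_k^c)$ decays polynomially in $\varepsilon$ (this suffices since $\delta^2=\varepsilon^{1/5}$). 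Making this last probability bound \emph{quantitative}, rather than the qualitative ``$\to1$'' statement of Lemma \ref{Lm:HitBackToZeroX:AbsXBiggerThanHalfKappa}, is the one place requiring genuine care; the rest is the explicit squared-Bessel optional-stopping computation above.
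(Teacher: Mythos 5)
Your good--event computation is correct and it takes a genuinely different route from the paper: you exploit the fact that the radial process $r_t^\varepsilon$ satisfies the damped--BES(2) equation \eqref{Eq:RadialProcessBessel} \emph{exactly}, reduce $\sigma_k\le\rho_k$ on the confinement event $A_k$, and extract $\mathbf{E}(\rho_k-\tau_k)\le 13\delta^2$ by optional stopping applied to $(r_t^\varepsilon)^2$. Two small repairs are needed there. First, your claim that the bound on $\mathbf{E}(\rho_k-\tau_k)$ is ``unconditional'' is false as stated: if $r_{\tau_k}^\varepsilon>\sqrt{13}\,\delta$, then $r_s^\varepsilon\ge\sqrt{13}\,\delta$ on $[\tau_k,\rho_k]$ and the drift inequality reverses. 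Second, $A_k$ is not $\mathcal{F}_{\tau_k}$--measurable (it looks at the path up to $\rho_k$), so ``apply optional stopping on $A_k$'' should be rearranged: since $A_k\subseteq\{r_{\tau_k}^\varepsilon\le\sqrt{13}\,\delta\}$ and $\rho_k-\tau_k\ge 0$, bound $\mathbf{E}\left[(\rho_k-\tau_k)\mathbf{1}_{A_k}\right]\le\mathbf{E}\left[(\rho_k-\tau_k)\mathbf{1}_{\{r_{\tau_k}^\varepsilon\le\sqrt{13}\delta\}}\right]$ and run the stopping argument conditionally on $\mathcal{F}_{\tau_k}$ on that larger, $\mathcal{F}_{\tau_k}$--measurable event. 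Both fixes are cosmetic.

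The genuine gap is the term $\mathbf{E}\left[(\sigma_k-\tau_k)\mathbf{1}_{A_k^c}\right]$, which you name but do not close. The lemma bounds the \emph{full} expectation, and on $A_k^c$ you have neither a reduction of $\sigma_k$ to a radial exit time nor any a priori moment bound on $\sigma_k-\tau_k$: your sketch (``outward drift plus damping bound the duration of any radial excursion'') does not control $\sigma_k$, which is defined by $|Y_t^\varepsilon|$ hitting $2\delta$, not by $r_t^\varepsilon$ exiting anything. Moreover, the polynomial decay of $\mathbf{P}(A_k^c)$ that your Cauchy--Schwarz step would require is not available off the shelf: Lemma \ref{Lm:HitBackToZeroX:AbsXBiggerThanHalfKappa} is stated only qualitatively, as a probability tending to $1$. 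The paper sidesteps all of this with a one--dimensional comparison: since the discarded drift $(X_t^\varepsilon)^2/\varepsilon\ge 0$ pathwise, $Y_t^\varepsilon\ge\widehat{Y}_t$, where $d\widehat{Y}_t=-\widehat{Y}_t\,dt+dW_t^2$ is an OU process driven by the same Brownian motion; hence the upward crossing of $Y^\varepsilon$ from $\delta$ to $2\delta$ is dominated by an OU hitting time \emph{no matter what} $X^\varepsilon$ does, and the expected exit time of the OU process from $[-2\delta,2\delta]$, computed from the ODE $-yu'+\tfrac{1}{2}u''=-1$, is $O(\delta^2)$ (conditioning on exit at the top, which Lemma \ref{Lm:HitBackToZeroX:YLessThanMinusDelta} makes the dominant case). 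Importing that comparison would close your bad--event estimate, but it also renders the confinement event unnecessary --- which is precisely the structural advantage of the paper's argument over yours.
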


\begin{proof}
Let us introduce the auxiliary OU--process
\begin{equation}\label{Lm:ExitTimeEstimate:NearO:TauToSigma:Eq:AuxiliaryOU-Yprocess}
d\widehat{Y}_t=-\widehat{Y}_t dt+dW_t^2 \ , \ \widehat{Y}_0=Y_0^\varepsilon \ .
\end{equation}

By Lemma \ref{Lm:HitBackToZeroX:YLessThanMinusDelta}, we know that as $\varepsilon$ is small, with probability close to $1$
we have $Y^\varepsilon_{\tau_k}=2\delta$. Taking this into account, as we have $\dfrac{(X_t^\varepsilon)^2}{\varepsilon}\geq 0$,
we can estimate by comparison that
$$\mathbf{E}(\sigma_k-\tau_k)\leq \mathbf{E} \left(\sigma|\widehat{Y}_{\sigma}=2\delta\right) \ .$$

Here $\sigma$ is the first time that the OU--process $\widehat{Y}_t$
starting from $\widehat{Y}_0=\delta$ hits $Y=\pm 2\delta$. As we have
$$\begin{array}{ll}
\mathbf{E}\sigma & =\mathbf{E}\left(\sigma|\widehat{Y}_{\sigma}=2\delta\right)\mathbf{P}(\widehat{Y}_\sigma=2\delta)+
\mathbf{E}\left(\sigma|\widehat{Y}_{\sigma}=-2\delta\right)\mathbf{P}(\widehat{Y}_\sigma=-2\delta)
\\
&\geq \mathbf{E}\left(\sigma|\widehat{Y}_{\sigma}=2\delta\right)\mathbf{P}(\widehat{Y}_\sigma=2\delta)
\\
& = \dfrac{3}{4}\mathbf{E}\left(\sigma|\widehat{Y}_{\sigma}=2\delta\right) \ ,
\end{array}$$
we can further estimate
\begin{equation}\label{Lm:ExitTimeEstimate:NearO:TauToSigma:Eq:EstimateExitTimeInTermsOU}
\mathbf{E}(\sigma_k-\tau_k)\leq \dfrac{4}{3}\mathbf{E}\sigma \ .
\end{equation}

We denote $u(\delta)=\mathbf{E}\sigma$. By the standard theory of
stochastic differential equations we know that $u(y)$, $y\in [-2\delta, 2\delta]$ is the solution to the ODE
$$\left\{\begin{array}{l}
-yu'(y)+\dfrac{1}{2}u''(y)=-1 \ ,
\\
u(2\delta)=u(-2\delta)=0 \ .
\end{array}\right.$$
Solving the above ODE system, we obtain that
$$u(y)=-2\int_{-2\delta}^y e^{z^2}dz\int_{-2\delta}^z e^{-u^2}du+2\int_{-2\delta}^y e^{z^2}dz
\dfrac{\displaystyle{\int_{-2\delta}^{2\delta}e^{z^2}dz\int_{-2\delta}^z e^{-u^2}du}}
{\displaystyle{\int_{-2\delta}^{2\delta}e^{z^2}dz}} \ .$$
It is easy to see that as $y\in [-2\delta, 2\delta]$ we have
$0\leq \dfrac{\displaystyle{\int_{-2\delta}^y e^{z^2}dz}}{\displaystyle{\int_{-2\delta}^{2\delta}e^{z^2}dz}}\leq 1$. Thus
$$0\leq u(\delta)\leq \displaystyle{\int_{\delta}^{2\delta}e^{z^2}dz\int_{-2\delta}^z e^{-u^2}du} \ .$$
In particular, this implies that
$u(\delta)\leq C\delta^2$ for some $C>0$. Taking into account \eqref{Lm:ExitTimeEstimate:NearO:TauToSigma:Eq:EstimateExitTimeInTermsOU},
we obtain the statement of this Lemma.
\end{proof}

\begin{lemma}\label{Lm:ExitTimeEstimate:FarFromO:SigmaToTau}
We have $\mathbf{E}(\tau_{k+1}-\sigma_k)\geq C\delta$ as $\varepsilon\downarrow 0$ for some constant $C>0$.
\end{lemma}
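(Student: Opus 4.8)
The plan is to convert a lower bound on the duration $\tau_{k+1}-\sigma_k$ into a lower bound on the $Y$--weighted occupation time, which is directly accessible through the dynamics. Throughout $[\sigma_k,\tau_{k+1}]$ we have $Y_t^\varepsilon\geq \delta$, and by Lemma \ref{Lm:HitBackToZeroX:YLessThanMinusDelta} we may work on the event $A=\{Y_{\sigma_k}^\varepsilon=2\delta\}$, whose probability tends to $1$ as $\varepsilon\downarrow 0$; on $A$ the process departs from the positive value $2\delta$, so the first time $|Y^\varepsilon|=\delta$ is the first time $Y^\varepsilon=\delta$, giving $Y_{\tau_{k+1}}^\varepsilon=\delta$. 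By Lemma \ref{Lm:HitClosenessToZeroX:OutsideDelta} and Corollary \ref{Cor:HitClosenessToZeroSquareXOverEps:OutsideDelta}, on this interval the fast drift $\tfrac{(X_t^\varepsilon)^2}{\varepsilon}$ is close to $\tfrac{1}{2Y_t^\varepsilon}$, so $Y_t^\varepsilon$ is well approximated by the damped--BES(2) diffusion \eqref{Eq:ABmodelYComponent}; all idealized computations below are carried out for that diffusion and then transferred to $Y_t^\varepsilon$ with the replacement error controlled by Corollary \ref{Cor:HitClosenessToZeroSquareXOverEps:OutsideDelta}.

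First I would integrate the $Y$--equation of \eqref{Eq:ABmodelPerturbedTimeRescaled} over $[\sigma_k,\tau_{k+1}]$ and apply a localized optional stopping argument to the martingale $\int dW^2$. Since $Y_{\tau_{k+1}}^\varepsilon-Y_{\sigma_k}^\varepsilon=-\delta$ on $A$, this yields
\[
\mathbf{E}\int_{\sigma_k}^{\tau_{k+1}}\left(\frac{(X_t^\varepsilon)^2}{\varepsilon}-Y_t^\varepsilon\right)dt=-\delta .
\]
Because $\tfrac{(X_t^\varepsilon)^2}{\varepsilon}\geq 0$, simply dropping this nonnegative term gives the lower bound on the occupation,
\[
\mathbf{E}\int_{\sigma_k}^{\tau_{k+1}}Y_t^\varepsilon\,dt\geq \delta .
\]
This step is robust: it uses only the sign of the fast drift and the boundary values of $Y^\varepsilon$, so no fine information about $X^\varepsilon$ enters.

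The complementary ingredient is an upper bound $\mathbf{E}\int_{\sigma_k}^{\tau_{k+1}}Y_t^\varepsilon\,dt\leq C\,\mathbf{E}(\tau_{k+1}-\sigma_k)$ with a universal constant $C$. For the idealized damped--BES(2) I would use its scale and speed densities $s'(y)=e^{y^2}/y$ and $m(y)=2ye^{-y^2}$, and solve the two exit--time problems on $(\delta,\infty)$ (absorbing at $\delta$, natural at $\infty$): the function $w$ with $Aw=-1$, $w(\delta)=0$, and the function $u$ with $Au=-y$, $u(\delta)=0$. A direct integration gives $w(y)=\ln(y/\delta)$, so $w(2\delta)=\ln 2$, while
\[
u'(y)=1+\frac{e^{y^2}}{y}\int_y^\infty e^{-z^2}dz ,
\]
so that $u(2\delta)=\int_\delta^{2\delta}u'(y)\,dy$ satisfies $u(2\delta)\leq C\,w(2\delta)$ for small $\delta$, since both integrands are comparable to $1/y$ on $[\delta,2\delta]$. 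As $w(2\delta)=\mathbf{E}_{2\delta}(\tau_\delta)$ and $u(2\delta)=\mathbf{E}_{2\delta}\int_0^{\tau_\delta}Y_t\,dt$ for the idealized process, this is precisely the desired comparison. What makes these expectations finite, and the ratio $u/w$ bounded, is the linear damping $-y$: it endows the speed density with a Gaussian tail, so the excursions of $Y^\varepsilon$ to moderate and large values, although they account for most of the expected duration, contribute only a controlled amount to the $Y$--weighted occupation.

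Combining the two displays gives $\delta\leq \mathbf{E}\int_{\sigma_k}^{\tau_{k+1}}Y_t^\varepsilon\,dt\leq C\,\mathbf{E}(\tau_{k+1}-\sigma_k)$, hence $\mathbf{E}(\tau_{k+1}-\sigma_k)\geq \delta/C$, which is the assertion. I expect the main obstacle to be the upper bound of the previous paragraph at the level of the true coupled process rather than the idealized diffusion: one must show that $\mathbf{E}\int_{\sigma_k}^{\tau_{k+1}}Y_t^\varepsilon\,dt$ is genuinely dominated by the order--one region where the damping acts, and transfer the exit--time estimates for \eqref{Eq:ABmodelYComponent} to $Y_t^\varepsilon$ while keeping the replacement error from Corollary \ref{Cor:HitClosenessToZeroSquareXOverEps:OutsideDelta}, of size $\varepsilon^{1-4\alpha}+\varepsilon^{1-6\alpha}(t_2-t_1)$, under control on the random interval $[\sigma_k,\tau_{k+1}]$; the contribution of the low--probability event $A^c$ must also be absorbed, which is where Lemma \ref{Lm:HitBackToZeroX:YLessThanMinusDelta} enters.
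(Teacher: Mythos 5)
Your idealized computations are correct ($w'(y)=1/y$, hence $w(2\delta)=\ln 2$, and $u'(y)=1+\tfrac{e^{y^2}}{y}\int_y^\infty e^{-z^2}dz$), and the occupation lower bound $\mathbf{E}\int_{\sigma_k}^{\tau_{k+1}}Y_t^\varepsilon\,dt\geq\delta$ is sound modulo routine localization in the optional stopping step. The genuine gap is the complementary inequality, $\mathbf{E}\int_{\sigma_k}^{\tau_{k+1}}Y_t^\varepsilon\,dt\leq C\,\mathbf{E}(\tau_{k+1}-\sigma_k)$, which you establish only for the idealized damped--BES(2) and explicitly defer for the true process: that deferred step \emph{is} the lemma, not an obstacle to be cleaned up afterwards. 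Two concrete problems. First, you cannot obtain this upper bound from the same integral identity that gave the lower bound: writing $\mathbf{E}\int Y_t^\varepsilon\,dt=\delta+\mathbf{E}\int\tfrac{(X_t^\varepsilon)^2}{\varepsilon}dt$ and estimating $\mathbf{E}\int\tfrac{(X_t^\varepsilon)^2}{\varepsilon}dt\leq\tfrac{1}{2\delta}\mathbf{E}(\tau_{k+1}-\sigma_k)+(\text{error from Corollary \ref{Cor:HitClosenessToZeroSquareXOverEps:OutsideDelta}})$, then combining with the lower bound, makes the $\delta$'s cancel and yields only $0\leq(\text{positive quantity})$ --- the sandwich built from one identity is circular. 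Second, the ratio bound $u(2\delta)\leq C\,w(2\delta)$ is a relation between two numbers computed for the idealized diffusion; to use it you must show that \emph{both} the exit time and the occupation of $Y^\varepsilon$ are close to their idealized values. Corollary \ref{Cor:HitClosenessToZeroSquareXOverEps:OutsideDelta} is an unweighted, weak-sense estimate, while the scale-function identities carry the weights $w'(y),u'(y)\asymp 1/y$, of size $1/\delta$ near the boundary; one would have to re-run Lemma \ref{Lm:HitClosenessToZeroSquareXOverEps:OutsideDelta} for test functions whose derivatives blow up like $\delta^{-1},\delta^{-2},\delta^{-3}$ and verify that positive powers of $\varepsilon$ survive. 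Moreover, if you had such a transfer for the exit time, then $\mathbf{E}(\tau_{k+1}-\sigma_k)\approx w(2\delta)=\ln 2\geq C\delta$ immediately, and your entire occupation-time construction becomes superfluous.

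The paper's own proof sidesteps every one of these issues with a pathwise comparison: since $\tfrac{(X_t^\varepsilon)^2}{\varepsilon}\geq 0$, the drift in the $Y$--equation of \eqref{Eq:ABmodelPerturbedTimeRescaled} dominates $-Y$, so the comparison theorem (same Brownian motion $W^2$) gives $Y_t^\varepsilon\geq\widehat Y_t$ pathwise, where $d\widehat Y_t=-\widehat Y_t\,dt+dW_t^2$ is an OU process started from $2\delta$; consequently $\tau_{k+1}-\sigma_k$ dominates the time for $\widehat Y$ to reach $\delta$, whose expectation is computed from the explicit ODE $-yu'+\tfrac12 u''=-1$ to be $2\int_\delta^{2\delta}e^{z^2}\int_z^\infty e^{-v^2}dv\,dz\geq C\delta$. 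There is no approximation error to control and no coupling of the estimate back to the unknown quantity. If you want to salvage your route, drop the occupation argument and either adopt this comparison, or carry out the weighted version of Lemma \ref{Lm:HitClosenessToZeroSquareXOverEps:OutsideDelta} with $f=w$ directly, which would in fact give the stronger conclusion $\mathbf{E}(\tau_{k+1}-\sigma_k)\geq\ln 2-o(1)$.
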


\begin{proof}
Recall that the $Y$--equation in \eqref{Eq:ABmodelPerturbedTimeRescaled} has the form
$$dY_t^\varepsilon=\left(\dfrac{1}{\varepsilon}(X_t^\varepsilon)^2-Y_t^\varepsilon\right)dt+dW_t^2 \ .$$
Thus by comparison, we know that
$$Y_t^\varepsilon\geq \widehat{Y}_t \ ,$$
in which $\widehat{Y}_t$ is an OU--process defined by
$$d\widehat{Y}_t=-\widehat{Y}_tdt+dW_t^2 \ , \widehat{Y}_0=Y_0^\varepsilon \ .$$
From here, we know that we have
$$\mathbf{E}(\tau_{k+1}-\sigma_k)\geq \mathbf{E} \tau \ ,$$
where $\tau$ is the first time that the process $\widehat{Y}_t$ starting from $2\delta$
hits $\delta$.

Set $u(2\delta)=\mathbf{E}\tau$. From the standard theory of stochastic differential equations we infer that $u(y)$, $y\in [\delta,\infty)$
is the solution to the ODE
$$\left\{\begin{array}{l}
-yu'(y)+\dfrac{1}{2}u''(y)=-1 \ ,
\\
u(\delta)=u(\infty)=0 \ .
\end{array}\right.$$
Solving the above ODE system, we obtain, for $y\in [\delta, \infty)$, that $u(y)=\lim\limits_{M\rightarrow\infty} u_M(y)$, where
$$u_M(y)=-2\int_{M}^y e^{z^2}dz\int_{M}^z e^{-u^2}du+2\int_{M}^y e^{z^2}dz
\dfrac{\displaystyle{\int_{M}^{\delta}e^{z^2}dz\int_{M}^z e^{-u^2}du}}
{\displaystyle{\int_{M}^{\delta}e^{z^2}dz}} \ .$$
Again, as $M\rightarrow\infty$ we have $\lim\limits_{M\rightarrow\infty}\dfrac{\displaystyle{\int_{M}^y e^{z^2}dz}}{\displaystyle{\int_{M}^{\delta}e^{z^2}dz}}=1$. Thus
in the limit we have
$$\mathbf{E}\tau=u(2\delta)=2\int_{\delta}^{2\delta}e^{z^2}dz\int_{z}^{\infty} e^{-u^2}du\geq 2\delta e^{\delta^2}\int_{2\delta}^\infty e^{-u^2}du\geq C\delta$$
for some constant $C>0$.
\end{proof}

\begin{lemma}\label{Lm:UpperBoundNumberOfCrossingsBeforeT}
The number of up--crossings $N(\varepsilon)$ from $\delta$ to $2\delta$ before time $T$ has the asymptotic $N(\varepsilon)\leq CT\delta^{-1}$
for some constant $C>0$.
\end{lemma}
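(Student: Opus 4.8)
The plan is to bound $N(\varepsilon)$ by charging to each completed up-crossing the minimum time the process must spend returning from $2\delta$ down to $\delta$, and then dividing the total time budget $T$ by this minimum. First I would recast the counting variable as $N(\varepsilon)=\max\{n:\sigma_n\leq T\}$, so that $\sigma_N\leq T<\sigma_{N+1}$. The key structural observation is that the down-crossing intervals $[\sigma_k,\tau_{k+1}]$, on which the process travels from the level $|Y^\varepsilon|=2\delta$ back to $|Y^\varepsilon|=\delta$, are pairwise disjoint, and for $k=1,\dots,N-1$ they all lie inside $[0,\sigma_N]\subseteq[0,T]$. Hence pathwise
\[
\sum_{k=1}^{N-1}(\tau_{k+1}-\sigma_k)\leq T.
\]

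Next I would promote the unconditional estimate of Lemma \ref{Lm:ExitTimeEstimate:FarFromO:SigmaToTau} to a conditional one. By the strong Markov property of $(X_t^\varepsilon,Y_t^\varepsilon)$ at the stopping time $\sigma_k$, the conditional law of $\tau_{k+1}-\sigma_k$ given $\mathcal{F}_{\sigma_k}$ depends only on the configuration $(X_{\sigma_k}^\varepsilon,Y_{\sigma_k}^\varepsilon)$, and the Ornstein--Uhlenbeck comparison carried out in Lemma \ref{Lm:ExitTimeEstimate:FarFromO:SigmaToTau} uses only that $|Y_{\sigma_k}^\varepsilon|=2\delta$; moreover, by Lemma \ref{Lm:HitBackToZeroX:YLessThanMinusDelta} we may restrict to the event $\{Y_{\sigma_k}^\varepsilon=2\delta\}$, whose probability tends to $1$. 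Consequently $\mathbf{E}[\tau_{k+1}-\sigma_k\mid\mathcal{F}_{\sigma_k}]\geq C\delta$ uniformly in $k$, with the constant inherited from Lemma \ref{Lm:ExitTimeEstimate:FarFromO:SigmaToTau}.

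I would then convert the random number of summands into a controllable expectation by a Wald-type argument. Since $\{N\geq n\}=\{\sigma_n\leq T\}\in\mathcal{F}_{\sigma_n}$, the variable $N$ is a stopping time for the discrete filtration $\mathcal{G}_n:=\mathcal{F}_{\sigma_n}$, and the compensated sum $\sum_{k=1}^{n}\big(\tau_{k+1}-\sigma_k-\mathbf{E}[\tau_{k+1}-\sigma_k\mid\mathcal{G}_k]\big)$ is a $\mathcal{G}_n$-martingale. Applying optional stopping at the index $N-1$ (legitimate because the partial sums are dominated by $T$) and inserting the uniform conditional lower bound gives
\[
T \;\geq\; \mathbf{E}\Big[\sum_{k=1}^{N-1}(\tau_{k+1}-\sigma_k)\Big] \;\geq\; C\delta\,\mathbf{E}[N-1].
\]
Rearranging yields $\mathbf{E}[N(\varepsilon)]\leq 1+T/(C\delta)\leq C'T\delta^{-1}$ for all small $\varepsilon$, which is the asserted asymptotic.

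The step I expect to be the main obstacle is the passage from the unconditional bound of Lemma \ref{Lm:ExitTimeEstimate:FarFromO:SigmaToTau} to the uniform \emph{conditional} lower bound together with the rigorous treatment of the random count $N$. One must confirm that the comparison underlying Lemma \ref{Lm:ExitTimeEstimate:FarFromO:SigmaToTau} is genuinely insensitive to $X_{\sigma_k}^\varepsilon$ and to the sign ambiguity $Y_{\sigma_k}^\varepsilon=\pm 2\delta$ (the latter removed by Lemma \ref{Lm:HitBackToZeroX:YLessThanMinusDelta}), and that the integrability required for optional stopping is supplied by the pathwise domination by $T$. The remaining arithmetic, absorbing the additive constant into the leading $T\delta^{-1}$ term as $\delta\downarrow 0$, is routine.
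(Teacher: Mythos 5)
Your overall strategy is exactly the one the paper intends: the paper's entire proof of this lemma is the single sentence that it follows from Lemma \ref{Lm:ExitTimeEstimate:FarFromO:SigmaToTau}, i.e.\ the renewal heuristic that each down--crossing costs expected time at least $C\delta$ while only time $T$ is available. Your two preparatory observations --- the pathwise disjointness bound $\sum_{k=1}^{N-1}(\tau_{k+1}-\sigma_k)\leq T$, and the strong--Markov upgrade of Lemma \ref{Lm:ExitTimeEstimate:FarFromO:SigmaToTau} to a conditional bound at $\mathcal{F}_{\sigma_k}$ --- are correct and are genuine improvements in rigor over the paper.

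However, the step you yourself flag as the crux contains a real flaw, and it is a flaw of \emph{measurability}, not of integrability. Write $Z_k=\tau_{k+1}-\sigma_k$ and $A_k=\mathbf{E}[Z_k\mid\mathcal{F}_{\sigma_k}]$. Any Wald--type identity $\mathbf{E}\sum_{k\leq\nu}Z_k=\mathbf{E}\sum_{k\leq\nu}A_k$ requires $\{\nu\geq k\}\in\mathcal{F}_{\sigma_k}$ for every $k$: the decision to count the $k$--th down--crossing must not look at that down--crossing. This holds for $\nu=N$, since $\{N\geq k\}=\{\sigma_k\leq T\}\in\mathcal{F}_{\sigma_k}$, but it fails for $\nu=N-1$, because $\{N-1\geq k\}=\{\sigma_{k+1}\leq T\}$ depends on the duration of the $k$--th down--crossing itself (and on the subsequent up--crossing). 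So ``optional stopping at the index $N-1$'' is not legitimate no matter how well the partial sums are dominated; domination addresses integrability, not the fact that $N-1$ is not a stopping time for the filtration that makes your compensated sum a martingale. If you repair this by stopping at $N$ instead, the stopped sum acquires the straddling term $Z_N=\tau_{N+1}-\sigma_N$, which is \emph{not} dominated by $T$, and which by the inspection paradox is length--biased, so it cannot simply be bounded by $\sup_k\mathbf{E}[A_k]$ either; your chain of inequalities then reads $C\delta\,\mathbf{E}[N]\leq T+\mathbf{E}[Z_N]$ with an uncontrolled last term. The standard repair is to truncate: work with $Z_k\wedge 1$. Then Wald at $\nu=N$ applies, the stopped sum is pathwise at most $T+1$, and the needed conditional bound $\mathbf{E}[Z_k\wedge 1\mid\mathcal{F}_{\sigma_k}]\geq c\delta$ on $\{Y^\varepsilon_{\sigma_k}=2\delta\}$ follows from the same one--dimensional OU computation as in Lemma \ref{Lm:ExitTimeEstimate:FarFromO:SigmaToTau} (the mean of the OU hitting time from $2\delta$ to $\delta$ is carried by excursions of order--one duration, so truncation at $1$ only changes the constant). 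This yields $\mathbf{E}[N(\varepsilon)]\leq C(T+1)\delta^{-1}$, and the same bookkeeping, together with the uniform smallness of $\mathbf{P}(Y^\varepsilon_{\sigma_k}=-2\delta\mid\mathcal{F}_{\tau_k})$ from Lemma \ref{Lm:HitBackToZeroX:YLessThanMinusDelta}, absorbs the rare crossings completed at the level $-2\delta$.
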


\begin{proof}
This follows from Lemma \ref{Lm:ExitTimeEstimate:FarFromO:SigmaToTau}.
\end{proof}

\begin{lemma}\label{Lm:TightnessYProcess}
The process $Y_t^\varepsilon$ is weakly compact in $\mathbf{C}_{[0,T]}(\mathbb{R})$.
\end{lemma}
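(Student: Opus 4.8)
The plan is to deduce tightness of $\{Y_t^\varepsilon\}$ from the much better behaved radial process $r_t^\varepsilon=\sqrt{(X_t^\varepsilon)^2+(Y_t^\varepsilon)^2}$. The key observation is that, by the It\^o computation \eqref{Eq:RadialProcessBessel}, $r_t^\varepsilon$ solves the damped--BES(2) equation $dr_t^\varepsilon=\left(\dfrac{1}{2r_t^\varepsilon}-r_t^\varepsilon\right)dt+dW_t^r$ with $r_0^\varepsilon=\sqrt{x_0^2+y_0^2}$, in which no factor of $\varepsilon$ survives. Since $W_t^r$ is a standard Brownian motion (by L\'evy's characterization) for every $\varepsilon>0$, and since this equation is well--posed in law (the coefficients are locally Lipschitz on $(0,\infty)$ and the origin is inaccessible), the law of $r_t^\varepsilon$ on $\mathbf{C}_{[0,T]}(\mathbb{R})$ is one and the same probability measure for all $\varepsilon>0$. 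A single measure is tight, so the family $\{r_t^\varepsilon\}_{\varepsilon>0}$ is automatically weakly compact.

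First I would record the elementary comparison $|Y_t^\varepsilon|\leq r_t^\varepsilon$ and set $Z_t^\varepsilon=r_t^\varepsilon-Y_t^\varepsilon\geq 0$. The strategy is then to show that $\sup_{0\leq t\leq T}Z_t^\varepsilon\rightarrow 0$ in probability as $\varepsilon\downarrow 0$; once this is established, tightness of $\{r_t^\varepsilon\}$ transfers to $\{Y_t^\varepsilon\}=\{r_t^\varepsilon-Z_t^\varepsilon\}$ through the standard criterion, namely compact containment together with control of the modulus of continuity $w(Y^\varepsilon,\rho)\leq w(r^\varepsilon,\rho)+2\sup_{[0,T]}Z_t^\varepsilon$, where $w(g,\rho)=\sup_{|t-s|\leq\rho}|g(t)-g(s)|$.

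To bound $\sup_{[0,T]}Z_t^\varepsilon$ I would work on the good event $\mathcal{G}_\varepsilon=\{\sup_{[0,T]}|X_t^\varepsilon|\leq 3\delta\}\cap\{\inf_{[0,T]}Y_t^\varepsilon\geq -2\delta\}$. Its first component has probability tending to $1$ directly by Lemma \ref{Lm:HitBackToZeroX:AbsXBiggerThanHalfKappa}; its second component follows by globalizing the one--excursion estimate of Lemma \ref{Lm:HitBackToZeroX:YLessThanMinusDelta} over the at most $N(\varepsilon)\leq CT\delta^{-1}$ excursions counted in Lemma \ref{Lm:UpperBoundNumberOfCrossingsBeforeT} via a union bound, so that $\mathbf{P}(\mathcal{G}_\varepsilon)\rightarrow 1$. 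On $\mathcal{G}_\varepsilon$ I would split according to the size of $Y_t^\varepsilon$: where $Y_t^\varepsilon\geq\delta$ one has $Z_t^\varepsilon=\dfrac{(X_t^\varepsilon)^2}{r_t^\varepsilon+Y_t^\varepsilon}\leq \dfrac{(X_t^\varepsilon)^2}{2\delta}\leq \dfrac{9\delta^2}{2\delta}=\dfrac{9\delta}{2}$, while where $Y_t^\varepsilon<\delta$ one has $r_t^\varepsilon\leq\sqrt{9\delta^2+4\delta^2}=\sqrt{13}\,\delta$ and hence $Z_t^\varepsilon\leq r_t^\varepsilon+|Y_t^\varepsilon|\leq(\sqrt{13}+2)\delta$. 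Thus $\sup_{[0,T]}Z_t^\varepsilon\leq C\delta\rightarrow 0$ on $\mathcal{G}_\varepsilon$, which yields the desired uniform smallness in probability.

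I expect the main obstacle to be precisely this globalization of the per--excursion estimate of Lemma \ref{Lm:HitBackToZeroX:YLessThanMinusDelta} into the bound $\inf_{[0,T]}Y_t^\varepsilon\geq -2\delta$ valid with probability $\rightarrow 1$: the probability of a deep downward crossing during a single excursion must be shown to be $o(\delta)$ so that it survives the union bound over the $\mathcal{O}(\delta^{-1})$ excursions. The remaining ingredients --- the $\varepsilon$--independence and well--posedness in law of the radial equation, and the transfer of weak compactness across a uniformly vanishing perturbation --- are routine.
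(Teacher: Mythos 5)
Your proposal is correct in substance but follows a genuinely different route from the paper. The paper's proof never touches the radial process: it fixes $\delta>0$ \emph{independent} of $\varepsilon$, conditions on the complement of the bad event $\{\min_{0\leq t\leq T}Y_t^\varepsilon\leq -2\delta\}$ (whose probability vanishes by Lemma \ref{Lm:HitBackToZeroX:YLessThanMinusDelta}), replaces each excursion $[\tau_k,\sigma_k]$ by a linear interpolation between $\delta$ and $2\delta$, invokes Lemma 5.1 of \cite{[FWfishpaper]} to reduce tightness of $Y^\varepsilon_t$ to tightness of this interpolated process, and then verifies a Kolmogorov--type moment bound $\mathbf{E}|Y_{s_2}^{\varepsilon}-Y_{s_1}^{\varepsilon}|^{a}\leq C|s_1-s_2|^{1+b}$ on the intervals $[\sigma_{k-1},\tau_k]$ using the fixed--$\delta$ version $\mathbf{E}(X_t^\varepsilon)^2\leq C\varepsilon/\delta$ of Lemma \ref{Lm:HitClosenessToZeroX:OutsideDelta}. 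Your route --- observing that $r_t^\varepsilon$ solves the $\varepsilon$--free damped BES(2) equation \eqref{Eq:RadialProcessBessel} exactly, so that (after noting weak uniqueness, which holds since the origin is inaccessible) its law on $\mathbf{C}_{[0,T]}(\mathbb{R})$ is a single measure, then proving $\sup_{[0,T]}(r_t^\varepsilon-Y_t^\varepsilon)\rightarrow 0$ in probability and transferring tightness through the modulus of continuity --- is arguably cleaner and buys more: since the unperturbed flow preserves $x^2+y^2$, one has $y^\pi(x_0,y_0)=\sqrt{x_0^2+y_0^2}=r_0^\varepsilon$, so your argument would identify the weak limit of $Y_t^\varepsilon$ as the damped BES(2) started at $y^\pi(x_0,y_0)$, i.e.\ essentially part (b) of Theorem \ref{Thm:MainTheorem}, not merely tightness.

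The one gap is exactly the one you flagged, and its repair is precisely the paper's own device: do not take $\delta=\varepsilon^{1/10}$ here. With shrinking $\delta$, the union bound over $N(\varepsilon)\lesssim\delta^{-1}\rightarrow\infty$ excursions would require a per--excursion failure probability $o(\delta)$, and Lemma \ref{Lm:HitBackToZeroX:YLessThanMinusDelta} is stated without any rate. Instead, fix $\delta>0$: the paper notes that Lemmas \ref{Lm:HitBackToZeroX:YLessThanMinusDelta}, \ref{Lm:ExitTimeEstimate:NearO:TauToSigma}, \ref{Lm:ExitTimeEstimate:FarFromO:SigmaToTau} remain valid for fixed $\delta$ (and the flow argument behind Lemma \ref{Lm:HitBackToZeroX:AbsXBiggerThanHalfKappa} goes through as well). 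Then $\mathbf{E}N(\varepsilon)\leq CT\delta^{-1}$ is bounded uniformly in $\varepsilon$, so a per--excursion failure probability that merely tends to $0$ suffices for the union bound, giving $\mathbf{P}(\mathcal{G}_{\varepsilon,\delta})\rightarrow 1$ for each fixed $\delta$. Your pointwise estimates then give $\sup_{[0,T]}Z_t^\varepsilon\leq(\sqrt{13}+2)\delta$ on $\mathcal{G}_{\varepsilon,\delta}$, and the two--parameter limit ($\varepsilon\downarrow 0$ first, then $\delta\downarrow 0$) yields $\sup_{[0,T]}Z_t^\varepsilon\rightarrow 0$ in probability, which is all that your transfer argument needs.
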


\begin{proof}
Let $(\Omega, \mathcal{F}, \mathbf{P})$ be the probability space for $Y_t^\varepsilon$, $0\leq t\leq T$, such that
for any $\omega\in \Omega$ the sample path $Y^\varepsilon_t(\omega)$, $0\leq t\leq T$ is a trajectory in $\mathbf{C}_{[0,T]}(\mathbb{R})$.
We would like to show that from any sequence $\varepsilon_k\downarrow 0$, $k=1,2,...$ as $k\rightarrow \infty$
one can extract a further subsequence $\varepsilon_{k_j}\downarrow 0$, $j=1,2,...$ as $j\rightarrow \infty$ such that
for any bounded continuous functional $F$ on $\mathbf{C}_{[0,T]}(\mathbb{R})$
we have
\begin{equation}\label{Lm:TightnessYProcess:WeakConvergenceY}
\mathbf{E} F(Y_t^{\varepsilon_{k_j}}(\omega))\rightarrow \mathbf{E} F(Y_t^0(\omega))
\end{equation}
for some $j\rightarrow \infty$ and some random element $Y_t^0$ in $\mathbf{C}_{[0,T]}(\mathbb{R})$. Here $\mathbf{E}$ is the expectation
with respect to $\mathbf{P}$.

Unlike any of the previous Lemmas, here we will pick some \textit{fixed} $\delta>0$.
It is easy to see that if we replace $\delta=\varepsilon^{\alpha}$ by a fixed $\delta$, then
Lemmas \ref{Lm:HitBackToZeroX:YLessThanMinusDelta}, \ref{Lm:ExitTimeEstimate:NearO:TauToSigma},
\ref{Lm:ExitTimeEstimate:FarFromO:SigmaToTau} remain valid
(The stopping times $\sigma_k$ and $\tau_k$ can also be defined in a same way as for $\delta=\varepsilon^{\alpha}$),
while the estimate
\eqref{Lm:HitClosenessToZeroX:OutsideDelta:Eq:EstimateOfSquareX} in Lemma
\ref{Lm:HitClosenessToZeroX:OutsideDelta} shall be modified into

\begin{equation}\label{Lm:TightnessYProcess:HitClosenessToZeroX:OutsideDelta}
\mathbf{E} (X_t^\varepsilon)^2\leq C\dfrac{\varepsilon}{\delta} \ .
\end{equation}

Henceforce we will make use of Lemmas \ref{Lm:HitBackToZeroX:YLessThanMinusDelta}, \ref{Lm:ExitTimeEstimate:NearO:TauToSigma},
\ref{Lm:ExitTimeEstimate:FarFromO:SigmaToTau} in below by directly adapting it to a fixed $\delta>0$.

Let for any small $\varepsilon>0$ the family of sample pathes
\begin{equation}\label{Lm:TightnessYProcess:BadEvent}
\Omega_{\text{bad}}^{\varepsilon,\delta}=\{\omega: \min\limits_{0\leq t\leq T}Y_t^\varepsilon(\omega)\leq -2\delta\} \ .
\end{equation}
By Lemma \ref{Lm:HitBackToZeroX:YLessThanMinusDelta} we know that
$\mathbf{P}(\Omega_{\text{bad}}^{\varepsilon,\delta})\rightarrow 0$ as $\varepsilon\downarrow 0$.

Let us introduce a new probability measure $\widehat{\mathbf{P}}$ on $(\Omega, \mathcal{F}, \mathbf{P})$
as follows. For any event $A\in \mathcal{F}$ we define
\begin{equation}
\widehat{\mathbf{P}}(A)=\dfrac{\mathbf{P}(A\backslash \Omega_{\text{bad}}^{\varepsilon,\delta})}
{\mathbf{P}(\Omega\backslash \Omega_{\text{bad}}^{\varepsilon,\delta})} \ .
\end{equation}
Let the corresponding expectation be defined by $\widehat{\mathbf{E}}$. As we have
$\mathbf{P}(\Omega_{\text{bad}}^{\varepsilon,\delta})\rightarrow 0$ as $\varepsilon\downarrow 0$, we have that $\widehat{\mathbf{E}} X\rightarrow \mathbf{E} X$ for
any random variable $X$ as $\varepsilon \downarrow 0$.
From here we see that to show \eqref{Lm:TightnessYProcess:WeakConvergenceY}
it suffices to show that
\begin{equation}\label{Lm:TightnessYProcess:WeakConvergenceYhat}
\widehat{\mathbf{E}} F(Y_t^{\varepsilon_{k_j}}(\omega))\rightarrow \widehat{\mathbf{E}} F(Y_t^0(\omega))
\end{equation}
for some $j\rightarrow \infty$ and some random element $Y_t^0$ in $\mathbf{C}_{[0,T]}(\mathbb{R})$.
We then understand \eqref{Lm:TightnessYProcess:WeakConvergenceYhat} is just saying
that $Y_t^\varepsilon$ is weakly--compact under $\widehat{\mathbf{P}}$.
We will then make use of Lemma 5.1 in \cite{[FWfishpaper]}. In fact, Lemma 5.1 in \cite{[FWfishpaper]}
indicates that in order to show weak--compactness of the family of sample paths in $Y^\varepsilon_t$ in $\mathbf{C}_{[0,t]}(\mathbb{R})$
under the measure $\widehat{\mathbf{P}}$,
it suffices to show, for each $\delta>0$, weak--compactness of the family of sample paths
$\widetilde{Y}_t^{\varepsilon,\delta}$, where $\widetilde{Y}_t^{\varepsilon,\delta}=Y_t^{\varepsilon}$
for $\sigma_{k-1}\leq t\leq \tau_k$, $k=1,2,...,N$ and
$$\widetilde{Y}_t^{\varepsilon,\delta}=\delta\dfrac{\tau_k-t}{\tau_k-\sigma_k}+2\delta\dfrac{t-\sigma_k}{\tau_k-\sigma_k}$$
for $\tau_k\leq t\leq \sigma_k$. This is because we have $|Y_t^\varepsilon(\omega)-\widetilde{Y}_t^{\varepsilon,\delta}(\omega)|\leq 4\delta$
for each $\delta>0$ on $\omega\in \Omega\backslash \Omega_{\text{bad}}^{\varepsilon,\delta}$.

By the classical Prokhorov's theorem, to show weak--compactness of the process $\widetilde{Y}_t^{\varepsilon,\delta}$,
it suffices to check tightness of the family of processes $\widetilde{Y}_t^{\varepsilon,\delta}$, $0\leq t \leq T$.
Since $\widetilde{Y}_t^{\varepsilon,\delta}$ is a linear interpolation between $\tau_k\leq t\leq \sigma_k$, we just have to
check that, for any $\sigma_{k-1}\leq s_1\leq s_2\leq \tau_k$ so that $|s_2-s_1|$ is small,
\begin{equation}\label{Lm:TightnessYProcess:widetildeYProcessEquiContinuity}
\widehat{\mathbf{E}}|\widetilde{Y}_{s_2}^{\varepsilon,\delta}-\widetilde{Y}_{s_1}^{\varepsilon,\delta}|^{a}\leq C|s_1-s_2|^{1+b} \ ,
\end{equation}
for some $a,b>0$ and $C>0$. Since $\widetilde{Y}_{s}^{\varepsilon,\delta}=Y_{s}^\varepsilon$ for $\sigma_{k-1}\leq s \leq \tau_k$,
and $\mathbf{P}(\Omega_{\text{bad}}^{\varepsilon,\delta})\rightarrow 0$ as $\varepsilon\downarrow 0$, we just have to check
\eqref{Lm:TightnessYProcess:YProcessEquiContinuity} for $Y_s^\varepsilon$ and $\widehat{\mathbf{E}}$ replaced by $\mathbf{E}$, i.e.
\begin{equation}\label{Lm:TightnessYProcess:YProcessEquiContinuity}
\mathbf{E}|Y_{s_2}^{\varepsilon}-Y_{s_1}^{\varepsilon}|^{a}\leq C|s_1-s_2|^{1+b} \ .
\end{equation}
Notice that, for any
$\sigma_{k-1}\leq s_1\leq s_2\leq \tau_k$, we have
\begin{equation}\label{Lm:TightnessYProcess:YProcessIntegralEquation}
Y_{s_2}^\varepsilon-Y_{s_1}^\varepsilon=\dfrac{1}{\varepsilon}\int_{s_1}^{s_2} (X_s^\varepsilon)^2ds-\int_{s_1}^{s_2}Y_s^\varepsilon ds+(W_{s_2}^2-W_{s_1}^2) \ .
\end{equation}
From here, we see that \eqref{Lm:TightnessYProcess:YProcessEquiContinuity}
follows from \eqref{Lm:TightnessYProcess:HitClosenessToZeroX:OutsideDelta}.

\end{proof}

\section{``Metastable" behavior of the system as $\varepsilon\downarrow 0$.}\label{Sec:Metastable}

The previous section considered the case when $\varepsilon\downarrow 0$. In this case, one can roughly understand that the coupled
process $(X_t^\varepsilon, Y_t^\varepsilon)$ converges weakly to $(0, Y_t)$. We can then let $t\rightarrow \infty$, so that the damped--BES(2) process
$Y_t$ in \eqref{Eq:RadialProcessBessel} converges to an invariant measure $\mu^Y$ on $Oy_A$. In this case, ignoring the topology with respect to which we speak
about convergence, one can say very vaguely that
$$\lim\limits_{t\rightarrow\infty}\lim\limits_{\varepsilon\downarrow 0}\  (X_t^\varepsilon, Y_t^\varepsilon)=(0 \ , \ \mu^Y \text{ on } Oy_A) \ .$$
It is in this sense that we can understand the measure $\mu^Y$ on $Oy_A$ as a global ``attractor" of
our system $(X_t^\varepsilon, Y_t^\varepsilon)$. One can also consider the case when the two limits are inverted, namely
for any given measurable set $\Gamma\subseteq\mathbb{R}^2$ we have the convergence of the form
$$\lim\limits_{\varepsilon\downarrow 0}\lim\limits_{t\rightarrow\infty}\mathbf{P}\left((X_t^\varepsilon, Y_t^\varepsilon)\in \Gamma\right)=\mu_0(\Gamma) \ .$$
The limiting measure $\mu_0(\Gamma)$ has been studied in \cite{[BT]} via invariant measure and Kolmogorov (Fokker--Plank)
equation, and has been shown to concentrate
on $Oy_A$. In the classical theory regarding random perturbations of dynamical systems
(see \cite[Section 6.6]{[FWbook2012]}), one is interested in considering the above two limits in a coordinated way. Namely we consider
the case when $t=t(\varepsilon)\rightarrow \infty$ as $\varepsilon\downarrow 0$, and the asymptotic distribution of $(X_{t(\varepsilon)}^\varepsilon, Y_{t(\varepsilon)}^\varepsilon)$.
In the classical case such as those demonstrated in \cite{[FWbook]}, \cite{[FWbook2012]},
the $\omega$--limit sets of the unperturbed system consists of isolated compactum.
In this case, if $t(\varepsilon)$ increases sufficiently slowly, then over time $t(\varepsilon)$ the trajectory of $(X_{t(\varepsilon)}^\varepsilon, Y_{t(\varepsilon)}^\varepsilon)$
cannot move far from that stable compactum in whose domain of attraction the initial
point is. Over larger time intervals there are passages from the neighborhood
of this compactum to neighborhoods of others: first to the ``closest" compactum
(in the sense of the action functional) and then to more and more ``far away"
ones. Such a phenomenon has been quantitatively characterized as the ``metastable" behavior of the system.

The particular feature of
the system \eqref{Intro:Eq:ABmodelPerturbed} that we consider here has
been in that the unperturbed system admits a continuum of stable attractors. At the level of
time--rescaled process \eqref{Eq:ABmodelPerturbedTimeRescaled}, this leads to possible
``jumps" of $(X_{t(\varepsilon)}^\varepsilon, Y_{t(\varepsilon)}^\varepsilon)$ between $Oy_A$ and $Oy_B$. To illustrate this, let us imagine that
we start our process $(X_t^\varepsilon, Y_t^\varepsilon)$ in \eqref{Eq:ABmodelPerturbedTimeRescaled}
from $(X_0^\varepsilon, Y_0^\varepsilon)$ such that $Y_0^\varepsilon\geq 0$.

As $\varepsilon$ is small, in very short time $\sim \mathcal{O}(\varepsilon)$, the process $(X_t^\varepsilon, Y_t^\varepsilon)$ first
comes close to the $Y$--axis along the deterministic flow, and it hits a neighborhood of
$(0, y^\pi(X_0^\varepsilon, Y_0^\varepsilon))$ \footnote{Recall the definition of $y^\pi(x_0, y_0)$ in Definition \ref{Def:ProjectionOperatorPi}.}.
For any $a>0$, let the stopping time
\begin{equation}\label{Eq:StppingTimeYleq-a}
T(a;\varepsilon)=\inf\{t\geq 0; Y_t^\varepsilon\leq -a\} \ .
\end{equation}
We then define
\begin{equation}\label{Eq:ProbabilityYleq-a}
p(a, t;\varepsilon)=\mathbf{P}_{(X_0^\varepsilon, Y_0^\varepsilon)}\left(T(a;\varepsilon)\leq t\right)
\end{equation}
to be the probability that the trajectory $\{(X_s^\varepsilon, Y_s^\varepsilon)\}_{0\leq s \leq t}$ ever reached below $Y=-a$ on the $Oy_B$ axis.
By Lemma \ref{Lm:HitBackToZeroX:YLessThanMinusDelta}, we have
that $p(a, t; \varepsilon)\rightarrow 0$ as $\varepsilon\downarrow 0$. We set $t(\varepsilon)\sim \dfrac{1}{p(a,t;\varepsilon)} \rightarrow \infty$
as $\varepsilon \downarrow 0$. Then we see that at time scale $\sim t(\varepsilon)$ the process $(X_{t(\varepsilon)}^\varepsilon, Y_{t(\varepsilon)}^\varepsilon)$
may demonstrate an excursion to $Y\leq -a$. By combining Lemma \ref{Lm:HitClosenessToZeroX:OutsideDelta}
 and the instability of the flow near $Oy_B$, we see that this excursion happens along the $Y$--axis
 and will hit in a neighborhood of $(0, -a)$.
In fact, within the half space for $Y>0$ the process $(X_t^\varepsilon, Y_t^\varepsilon)$ will be pushed by the deterministic flow
to be close to the $Y$--axis.
When the excursion diffuses to the half--space with $Y<0$ but $|X|\neq 0$, the deterministic flow will
quickly bring the process $(X_t^\varepsilon, Y_t^\varepsilon)$ back to the half--space with positive $Y$--value.
Therefore the excursion to $(0,-a)$ within the half--space for $Y<0$ should happen along the $Y$--axis.
At time $t\sim t(\varepsilon)$, the process $(X_t^\varepsilon, Y_t^\varepsilon)$
will be close to $(0,-a)$ and is fluctuating in a neighborhood of this point.
Due to instability of the flow near $Oy_B$ axis, the process $(X_t^\varepsilon, Y_t^\varepsilon)$ will then be
quickly (at time scale $\sim \mathcal{O}(\varepsilon)$) brought back to a neighborhood of $(0, a)$.

Under the above mechanism, as $\varepsilon>0$ is small, what we actually see is that the process $(X_t^\varepsilon, Y_t^\varepsilon)$, although mostly
stays within the half--plane of positive $Y$--value, being close to the $Y$--axis, makes rare excursions
to $(0, -a)$ along $Y$--axis, and after that quickly jumps back to $(0,a)$.
As $\varepsilon>0$ becomes smaller and smaller, the excursion to $(0, -a)$ becomes rarer and rarer, so that in the limit $\varepsilon\downarrow 0$, the process
$(X_t^\varepsilon, Y_t^\varepsilon)$ will not enter $Oy_B$ any more, and we arrive at the ``process level stable attractor" $(0,Y_t)$. This characterizes
the metastable behavior of the system \eqref{Eq:ABmodelPerturbedTimeRescaled}, and when changed back to the slow time, the perturbed system
\eqref{Intro:Eq:ABmodelPerturbed}.

\section{Formulation of the system \eqref{Intro:Eq:ABmodel}
as the Euler--Arnold equation for the group of all affine transformations of a line.}\label{Sec:Euler-Arnold}

In a beautiful paper from 1966
(see \cite{[Arnold1966]}, also \cite[Appendix 2]{[Arnold]} and \cite{[TaoBlog]}), V.I.Arnold observed
that many basic equations in physics, including the Euler equations of the motion
of a rigid body, and also the Euler equations describing the fluid dynamics of an inviscid incompressible fluid,
can be viewed (formally, at least) as geodesic flows on a (finite or infinite dimensional) Riemannian manifold $G$.
This Riemannian manifold $G$ is also a Lie group equipped with a right--invariant metric.
Equivalently, these geodesic flows can be written as the solution of an ordinary differential equation
at the co--tangent space to the origin of the Lie group $G$ (the dual space of the Lie algebra of $G$), describing the evolution of the angular momentum
(more precisely, the pull back of the angular momentum to the origin).
Such an ordinary differential equation has been thereafter named \textit{the Euler--Arnold equation}.
Below let us first briefly discuss the background of the Euler--Arnold equation in one subsection,
and then in another subsection we will formulate our system \eqref{Intro:Eq:ABmodel} as the
Euler--Arnold equation for the group of all affine transformations of a line.

\subsection{Background of the Euler--Arnold equation.}

Let $G$ be an $n$--dimensional real Lie group. Let $\mathfrak{g}$ be its Lie
algebra, i.e., the tangent space of $G$ at the identity element $e$
associated with a commutator relation $[ , ]$. The commutator
relation is defined in the standard way: For two tangent vectors
$\xi$ and $\eta$ the Lie bracket is defined as
$[\xi,\eta]=\displaystyle{\left.\dfrac{\partial^2}{\partial s \partial
t}\right|_{s=t=0}e^{t\xi}e^{s\eta}}e^{-t\xi}e^{-s\eta}$.
In a coordinate dependent language if $e_1,...,e_n$ be a basis of
$\mathfrak{g}$ so that $c_{ij}^k$ are structure constants, then $[e_i,
e_j]=\sum\limits_{k=1}^n c_{ij}^ke_k$.

Consider the actions of left and right shifts of $G$ on itself:

$$L_g: G\rightarrow G \ , \ L_g h=gh \ ; R_g: G \rightarrow G \ , \ R_gh=hg \ .$$

The induced maps on the tangent space at every $h\in G$ are

$$L_{g*}: T_hG \rightarrow T_{gh}G \ , \ R_{g*}: T_hG\rightarrow T_{hg}G \ .$$

Consider the diffeomorphism $R_{g^{-1}}L_g$, which is an inner
automorphism of the group $G$. This diffeomorphism preserves the
identity element $e$ and its derivative at the identity element $e$
is the so called \textit{adjoint representation} $Ad_g$ of the group
$G$. That is to say, $$Ad_g: \mathfrak{g}\rightarrow \mathfrak{g} \ , \
Ad_g=(R_{g^{-1}}L_g)_{*e} \ .$$

The mapping $Ad_g$ satisfies $Ad_g[\xi, \eta]=[Ad_g\xi, Ad_g\eta] \
, \ \xi, \eta\in \mathfrak{g}$ as well as $Ad_{gh}=Ad_gAd_h$. We could view
the mapping $Ad$ as a mapping from the group to the space of linear
operators on $\mathfrak{g}$: $$Ad(g)=Ad_g \ .$$ The derivative of the mapping
$Ad$ at the identity element $e$ of the group $G$ is a linear
mapping $ad$ from $\mathfrak{g}$ to the space of linear operators on $\mathfrak{g}$.
We have

$$ad=Ad_{*e}: \mathfrak{g} \rightarrow \text{End}\mathfrak{g} \ , \ ad_\xi=\left.\dfrac{d}{dt}\right|_{t=0}Ad_{e^{t\xi}} \ .$$

We see that
$ad_\xi\eta=\left.\dfrac{d}{dt}\right|_{t=0}Ad_{e^{t\xi}}\eta
=\left.\dfrac{d}{dt}\right|_{t=0}(R_{e^{-t\xi}}L_{e^{t\xi}})_{*e}\eta=
\left.\dfrac{\partial^2}{\partial t\partial
s}\right|_{t=s=0}e^{t\xi}e^{s\eta}e^{-t\xi}e^{-s\eta}=[\xi,\eta]$.

Let us now consider the dual space $\mathfrak{g}^*$ of the Lie algebra $\mathfrak{g}$.
The space $\mathfrak{g}^*$ consists of all real linear functionals on $\mathfrak{g}$:
$\mathfrak{g}^*=T_e^*G$. Let us denote the pairing of $\xi\in T_g^*G$ and
$\eta\in T_gG$ in the cotangent/tangent spaces at $g\in G$ by the
bracket
$$(\xi,\eta)\in \mathbb{R} \ , \xi\in T_g^*G \ , \eta\in T_gG \ .$$ It is
natural to define the dual operator $Ad_g^*: \mathfrak{g}^*\rightarrow \mathfrak{g}^*$ by the
identity $$(Ad_g^*\xi, \eta)=(\xi, Ad_g\eta) \ .$$
The operator $Ad_g^*$ is the \textit{co-adjoint representation} of
the group $G$.

Correspondingly, one can define
$$ad^*_\xi: \mathfrak{g}^*\rightarrow\mathfrak{g}^* \ , \xi\in
\mathfrak{g} \ , \ ad_\xi^*=\left.\dfrac{d}{dt}\right|_{t=0}Ad^*_{e^{t\xi}} \
,$$ such that
$$(ad^*_\xi\eta, \zeta)=(\eta, ad_\xi\zeta) \ , \eta\in \mathfrak{g}^* \ ,
\zeta\in \mathfrak{g} \ , \xi\in \mathfrak{g} \ .$$ We may denote
$$ad_\xi^*\eta=\{\xi, \eta\} \ , \ \xi\in \mathfrak{g} \ , \ \eta\in \mathfrak{g}^* \
.$$

We have an identity
$$(\{\xi,\eta\}, \zeta)=(\eta, [\xi, \zeta]) \ \text{ for } \ \xi\in \mathfrak{g} \ , \ \eta\in \mathfrak{g}^* \ , \ \zeta\in \mathfrak{g} \ .$$

Let us turn to coordinate--dependent language. If $e^1, ...,e^n$ is
a basis dual to $e_1,...,e_n$ in $\mathfrak{g}^*$: $(e^i, e_j)=\delta_j^i$. Then
we can calculate $\{e_i, e^j\}=\sum\limits_{k=1}^n c_{ik}^je^k$.

Let $A: \mathfrak{g}\rightarrow \mathfrak{g}^*$ be a symmetric and positive definite linear
operator: for any $\xi, \eta\in \mathfrak{g}$ we have $(A\xi, \xi)>0$ and
$(A\xi, \eta)=(A\eta, \xi)$. Let $A_g: T_gG\rightarrow T_g^*G$ be defined by
$A_g\xi=L_{g}^*AL_{g^{-1}*}\xi$, $\xi\in T_gG$. In mechanical
applications the operator $A_g$ gives the \textit{moment of
inertia}. Consider a metric on $G$ defined by an inner product
$$\langle\xi, \eta\rangle_g=(A_g\xi, \eta)=(A_g\eta, \xi)=\langle\eta,
\xi\rangle_g$$ for $\xi, \eta\in T_gG$. This metric is a
\textit{left invariant metric} on $G$, i.e., $\langle\xi,
\eta\rangle_e=\langle L_{g*}\xi, L_{g*}\eta\rangle_g$, and it makes the Lie group $G$ into a Riemannian manifold.
We shall
denote the corresponding inner product $\langle , \rangle_e$ at
$T_eG=\mathfrak{g}$ simply as $\langle , \rangle$. We shall also denote the
operator $A_e$ simply as $A$.

The above introduced inner product also induces an inner product on
$T_e^*G$. Let $\zeta\in T_e^*G$ and $\mu\in T_e^*G$. We can define
$\langle\zeta, \mu\rangle=\langle\zeta,\mu\rangle_e=(\zeta, A^{-1}\mu)$.
Such an inner product on $T_e^*G$ makes $T_e^*G$ into an inner
product space.

Consider a geodesic curve $g=g(t)$ on the group $G$, with respect to
the metric given by $\langle , \rangle_g$. The trajectory $g=g(t)$
complies with \textit{the principle of least action}. The Lagrangian
here is the \textit{kinetic energy} $T(t)=E(t)
=\dfrac{1}{2}\langle\dot{g}(t), \dot{g}(t)\rangle_{g(t)}$ and the
\textit{action} is
$S_{0t}(g)=\displaystyle{\int_0^t\dfrac{1}{2}\langle\dot{g}(s),
\dot{g}(s)\rangle_{g(s)}ds}$. The trajectory $g=g(t)$ is such that
the first variation of the action vanishes.

The \textit{angular velocity} is $\omega=\dot{g}$. Let

$$\omega_c=L_{g^{-1}*}\dot{g}\in \mathfrak{g} \ , \ \omega_s=R_{g^{-1}*}\dot{g}\in \mathfrak{g} \ .$$

These are the so called ``angular velocity in the body" ($\omega_c$)
and ``angular velocity in the space" ($\omega_s$).

The \textit{angular momentum} is defined as $$M=A_g \dot{g} \ .$$ We
see that $M\in T_g^*G$. We consider

$$M_c=L_{g^{-1}}^*M\in \mathfrak{g}^*  \ , \ M_s=R_{g^{-1}}^*M\in \mathfrak{g}^* \ .$$

These can be viewed as ``angular momentum in the body" ($M_c$) and
``angular momentum in the space" ($M_s$).

The kinetic energy can be rewritten as

$$T=E
=\dfrac{1}{2}\langle\dot{g}, \dot{g}\rangle_g
=\dfrac{1}{2}\langle\omega_c, \omega_c\rangle =\dfrac{1}{2}(A\omega_c, \omega_c)
=\dfrac{1}{2}(A_g\dot{g}, \dot{g}) =\dfrac{1}{2}(M_c, \omega_c)
=\dfrac{1}{2}(M, \dot{g}) \ .
$$

\begin{theorem}[Euler's equation]\label{Thm:EulerEquation}
We have
\begin{equation}\label{Thm:EulerEquation:Eq:Euler}
\dfrac{dM_c}{dt}=\{\omega_c, M_c\} \ .
\end{equation}
\end{theorem}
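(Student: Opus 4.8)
The plan is to derive Euler's equation from the principle of least action, working entirely in the body frame so that the left invariance of the metric trivializes the geometry. First I would rewrite the action using the identity $\langle\dot g,\dot g\rangle_g=(A\omega_c,\omega_c)$ recorded just above the statement, so that $S_{0t}(g)=\int_0^t\frac12(A\omega_c,\omega_c)\,ds$ depends on the trajectory only through the curve $\omega_c(s)=L_{g^{-1}*}\dot g(s)$ in the Lie algebra $\mathfrak g$. I would also note, using $A_g=L_g^*AL_{g^{-1}*}$ together with $M=A_g\dot g$ and the definition $M_c=L_{g^{-1}}^*M$, that $M_c=A\omega_c$; since $A$ is symmetric this lets me replace $(A\omega_c,\,\cdot\,)$ by $(M_c,\,\cdot\,)$ throughout.

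The heart of the argument is a kinematic commutation relation between the variation and the time derivative. Consider a one--parameter family $g(s,\varepsilon)$ of curves with fixed endpoints, write $\delta=\partial_\varepsilon|_{\varepsilon=0}$ for the variation and $\dot{}=\partial_s$ for the time derivative, and set $\eta_c=L_{g^{-1}*}\delta g\in\mathfrak g$. The claim is
\begin{equation}
\delta\omega_c=\dot\eta_c+[\omega_c,\eta_c]\ .
\end{equation}
For a matrix group this is an immediate computation: with $\omega_c=g^{-1}\dot g$ and $\eta_c=g^{-1}\delta g$, both $\delta\omega_c$ and $\dot\eta_c$ equal $g^{-1}\partial_s\partial_\varepsilon g$ modulo the terms $-\eta_c\omega_c$ and $-\omega_c\eta_c$ respectively, whose difference is exactly $[\omega_c,\eta_c]$. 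For an abstract Lie group I would obtain the same relation from the structure equation for the (left) Maurer--Cartan form together with the equality of mixed partials $\partial_s\partial_\varepsilon=\partial_\varepsilon\partial_s$; this is the step I expect to require the most care, since it is where the Lie bracket enters and where the sign conventions tied to the left invariant setup must be tracked.

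With the commutation relation in hand, the rest is a variational computation. Differentiating the action and inserting $\delta\omega_c$ gives $\delta S=\int_0^t\big[(M_c,\dot\eta_c)+(M_c,[\omega_c,\eta_c])\big]\,ds$. I would integrate the first term by parts, the boundary contributions vanishing because the endpoints are fixed, turning it into $-\int_0^t(\dot M_c,\eta_c)\,ds$. For the second term I would apply the duality identity $(\{\xi,\eta\},\zeta)=(\eta,[\xi,\zeta])$ recorded above, with $\xi=\omega_c$, $\eta=M_c$, $\zeta=\eta_c$, rewriting $(M_c,[\omega_c,\eta_c])=(\{\omega_c,M_c\},\eta_c)$. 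Hence $\delta S=\int_0^t(-\dot M_c+\{\omega_c,M_c\},\eta_c)\,ds$. Since the geodesic is a critical point of the action and $\eta_c=L_{g^{-1}*}\delta g$ ranges over all $\mathfrak g$--valued curves vanishing at the endpoints, the fundamental lemma of the calculus of variations forces the integrand factor to vanish pointwise, which yields $\dfrac{dM_c}{dt}=\{\omega_c,M_c\}$, i.e. \eqref{Thm:EulerEquation:Eq:Euler}.
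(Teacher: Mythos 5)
Your proposal is correct, but it is worth noting that the paper does not actually prove this theorem at all: its ``proof'' is a one-line citation to Theorem 2 of Appendix 2 in Arnold's book. What you have written out is, in substance, the classical variational derivation that the cited source contains, so you have supplied the argument the paper outsources. Your key steps all check out against the paper's conventions: the commutation relation $\delta\omega_c=\dot\eta_c+[\omega_c,\eta_c]$ is consistent with the paper's bracket, which for a matrix group is the matrix commutator (as the paper itself verifies via $ad_\xi\eta=[\xi,\eta]$ and its affine-group computation), and indeed $\delta\omega_c-\dot\eta_c=\omega_c\eta_c-\eta_c\omega_c$; the identification $M_c=A\omega_c$ agrees with the paper's chain of equalities for the kinetic energy and with $\omega_c=A^{-1}M_c$ used in its proof of the Euler--Arnold form; and the duality step $(M_c,[\omega_c,\eta_c])=(\{\omega_c,M_c\},\eta_c)$ is exactly the paper's identity $(\{\xi,\eta\},\zeta)=(\eta,[\xi,\zeta])$ with $\xi=\omega_c$, $\eta=M_c$, $\zeta=\eta_c$, so the signs come out right and the critical-point condition gives $\dot M_c=\{\omega_c,M_c\}$ as stated. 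The one step you flagged as delicate, justifying the commutation relation on an abstract Lie group rather than a matrix group, is indeed the only place requiring extra care (Maurer--Cartan structure equation plus equality of mixed partials), but your matrix-group computation is fully adequate for the application in this paper, where the group is realized concretely as $2\times 2$ matrices. Net comparison: the paper's citation buys brevity; your argument buys a self-contained proof that double-checks the sign conventions (left-invariant metric, $\{\xi,\cdot\}=ad_\xi^*$) against the paper's own definitions.
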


\begin{proof}
The proof of this Theorem can be found in \cite[Appendix 2, Theorem 2]{[Arnold]}.
\end{proof}

\begin{theorem}[Euler--Arnold equation]\label{Thm:Euler-ArnoldEquation}
We have
\begin{equation}\label{Thm:Euler-ArnoldEquation:Eq:Euler-Arnold}
\dfrac{dM_c}{dt}=\{A^{-1} M_c, M_c\} \ .
\end{equation}
\end{theorem}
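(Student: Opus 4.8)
The plan is to deduce the Euler--Arnold equation \eqref{Thm:Euler-ArnoldEquation:Eq:Euler-Arnold} directly from Euler's equation \eqref{Thm:EulerEquation:Eq:Euler} by re--expressing the angular velocity in the body $\omega_c$ in terms of the angular momentum in the body $M_c$. The single algebraic input I would establish is the constitutive identity $M_c = A\omega_c$, relating the two ``body'' quantities through the inertia operator $A = A_e$; once this is available the theorem is an immediate substitution.

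To prove $M_c = A\omega_c$ I would simply unwind the definitions given above. From $M = A_g\dot g$ together with $A_g\xi = L_g^* A L_{g^{-1}*}\xi$ one has $M = L_g^* A (L_{g^{-1}*}\dot g) = L_g^* A\omega_c$, using $\omega_c = L_{g^{-1}*}\dot g$. Applying the operator $L_{g^{-1}}^*$ then gives $M_c = L_{g^{-1}}^* M = L_{g^{-1}}^* L_g^* A\omega_c$. The key point is that the two translations are inverse to one another, $L_g \circ L_{g^{-1}} = \mathrm{id}_G$, so that the induced operators on the cotangent spaces compose to the identity, $L_{g^{-1}}^* L_g^* = \mathrm{id}$, and one is left with $M_c = A\omega_c$. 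This is fully consistent with the kinetic energy computation carried out just before the statement, where the chain of equalities produces $(A\omega_c, \omega_c) = (M_c, \omega_c)$.

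Since $A$ is symmetric and positive definite, it is an invertible linear isomorphism $\mathfrak g \to \mathfrak g^*$, so I may solve the constitutive relation for the body angular velocity, $\omega_c = A^{-1} M_c$. Substituting this into Euler's equation \eqref{Thm:EulerEquation:Eq:Euler} yields
$$\dfrac{dM_c}{dt} = \{\omega_c, M_c\} = \{A^{-1} M_c, M_c\} \ ,$$
which is precisely \eqref{Thm:Euler-ArnoldEquation:Eq:Euler-Arnold}.

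The argument is short and purely algebraic once Euler's equation is granted. The only place requiring care --- and the step I would regard as the main (mild) obstacle --- is the bookkeeping of the left translations and their dual actions on the cotangent spaces needed to verify $M_c = A\omega_c$: one must keep the composition order of $L_g^*$ and $L_{g^{-1}}^*$ straight so that the two shifts genuinely cancel to the identity. After that identity is in hand, the inversion of $A$ and the substitution into Euler's equation are routine.
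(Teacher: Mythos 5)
Your proposal is correct and follows essentially the same route as the paper: both establish $\omega_c = A^{-1}M_c$ by unwinding the definitions $M = A_g\dot g$, $A_g = L_g^* A L_{g^{-1}*}$, $M_c = L_{g^{-1}}^* M$ and using the cancellation $L_{g^{-1}}^* L_g^* = \mathrm{id}$, then substitute into Euler's equation \eqref{Thm:EulerEquation:Eq:Euler}. The only difference is cosmetic --- you read the chain of identities from $M_c$ toward $A\omega_c$ and then invert $A$, whereas the paper inserts $A^{-1}A$ and the dual translations to rewrite $\omega_c$ directly as $A^{-1}M_c$.
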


\begin{proof}
We notice that $\omega_c=L_{g^{-1}*}\dot{g}=A^{-1}AL_{g^{-1}*}\dot{g}=A^{-1}L_{g^{-1}}^*L_g^*AL_{g^{-1}*}\dot{g}
=A^{-1}L_{g^{-1}}^* A_g g=A^{-1}M_c$. Thus \eqref{Thm:Euler-ArnoldEquation:Eq:Euler-Arnold} follows from
\eqref{Thm:EulerEquation:Eq:Euler}.
\end{proof}

One can see that the evolution of the angular momentum in the body $M_c$ is described by
an ordinary differential equation \eqref{Thm:Euler-ArnoldEquation:Eq:Euler-Arnold} which is the
\textit{the Euler--Arnold equation}. The dynamics of this equation is an equivalent way of forming
the geodesic flows on the Riemannian manifold $G$.

\subsection{Formulation of the system \eqref{Intro:Eq:ABmodel}
as the Euler--Arnold equation.}

Let $G$ be the group of all affine transformations of a line $\ell$ (see \cite{[Molchanov]}). We
can represent $G$ in terms of the following matrices:

$$G=\left\{
g=g_{a,b}=\begin{pmatrix} a&b
\\
0&1
\end{pmatrix}; a>0, b\in \mathbb{R}\right\} \ .$$

The group multiplication is then just matrix multiplications:
$g_{a_2,b_2}g_{a_1,b_1}=g_{a_1a_2, a_2b_1+b_2}$. The inverse is
given by $g_{a,b}^{-1}=g_{\frac{1}{a}, -\frac{b}{a}}$. The identity
element $e=g_{1,0}$.

The Lie algebra
$$T_eG=\left\{\begin{pmatrix}x&y\\0&0\end{pmatrix};
x,y\in \mathbb{R}\right\} \ .$$

If $g\in G$ and $M\in T_eG$ then $L_{g*}M=gM$ and $R_{g*}M=Mg$ in
which the multiplication is understood as matrix multiplications.
This is because we have
$\left.\dfrac{d}{dt}\right|_{t=0}\exp(tM)=gM$ and
$\left.\dfrac{d}{dt}\right|_{t=0}\exp(tM)g=Mg$.

Let us use the inner product $\left(\begin{pmatrix}\xi_1 &
\xi_2\\0&0\end{pmatrix}, \begin{pmatrix}\eta_1 &
\eta_2\\0&0\end{pmatrix}\right)=\xi_1\eta_1+\xi_2\eta_2$ for
$\xi,\eta\in T_eG$. In this way we can identify $T_eG$ with
$T_e^*G$. Let $A:\mathbb{R}^2\rightarrow\mathbb{R}^2$ be the identity matrix. We can
introduce a metric on $G$ via $A$: for any $\xi,\eta\in T_eG$ we
introduce $\langle\xi,\eta\rangle=(\xi,\eta)$.

Let $g=\begin{pmatrix}a&b\\0&1\end{pmatrix}$,
$\eta=\begin{pmatrix}\eta_1&\eta_2\\0&0\end{pmatrix}$ and
$\xi=\begin{pmatrix}\xi_1&\xi_2\\0&0\end{pmatrix}$. Then

$$
Ad_g\eta=g\eta g^{-1}
=\displaystyle{\begin{pmatrix}a&b\\0&1\end{pmatrix}
\begin{pmatrix}\eta_1&\eta_2\\0&0\end{pmatrix}
\begin{pmatrix}\frac{1}{a}&-\frac{b}{a}\\0&1\end{pmatrix}}
=\displaystyle{\begin{pmatrix}\eta_1&-b\eta_1+a\eta_2\\0&0\end{pmatrix}} \ .
$$

By definition $(Ad_g^*\xi, \eta)=(\xi,
Ad_g\eta)=\eta_1\xi_1-b\eta_1\xi_2+a\eta_2\xi_2$. Thus we see that
$Ad_g^*\xi=\begin{pmatrix}\xi_1-b\xi_2 & a\xi_2\\0&0\end{pmatrix}$.

For any $\xi=\begin{pmatrix}\xi_1&\xi_2\\0&0\end{pmatrix}\in T_eG$
we can calculate
$$\exp(t\xi)=\begin{pmatrix}e^{t\xi_1}& \xi_2f(\xi_1)\\
0&1\end{pmatrix}$$ where
$$f(\xi_1)=\left\{\begin{array}{ll}\dfrac{e^{t\xi_1}-1}{\xi_1}
&\text{ if } \xi_1\neq 0 \ ,\\t &\text{ if } \xi_1=0 \
.\end{array}\right.$$

From here it is readily checked that
$$
[\xi,\eta]
\displaystyle{=\left.\dfrac{d}{dt}\right|_{t=0}Ad_{e^{t\xi}}\eta}
\displaystyle{=\left.\dfrac{d}{dt}\right|_{t=0}\begin{pmatrix}\eta_1&
-\dfrac{\xi_2}{\xi_1}(e^{t\xi_1}-1)\eta_1+e^{t\xi_1}\eta_2\\0&0\end{pmatrix}}
\displaystyle{=\begin{pmatrix}0&\xi_1\eta_2-\xi_2\eta_1\\0&0\end{pmatrix}} \ .
$$

Moreover, if $\zeta\in T_e^*G$ we have

\begin{equation}\label{Eq:PoissonBracketAffineLine}
\{\xi,\zeta\}
\displaystyle{=\left.\dfrac{d}{dt}\right|_{t=0}Ad_{e^{t\xi}}^*\zeta}
\displaystyle{=\left.\dfrac{d}{dt}\right|_{t=0}
\begin{pmatrix}\zeta_1-\dfrac{\xi_2}{\xi_1}(e^{t\xi_1}-1)\zeta_2&e^{t\xi_1}\zeta_2\\0&0\end{pmatrix}}
\displaystyle{=\begin{pmatrix}-\xi_2\zeta_2&\xi_1\zeta_2\\0&0\end{pmatrix}} \ .
\end{equation}

\begin{theorem}\label{Thm:Euler-ArnoldEquationAffineLine}
The Euler--Arnold equation for the group $G$ of all affine transformations of a line $\ell$ is equivalent to
\eqref{Intro:Eq:ABmodel}.
\end{theorem}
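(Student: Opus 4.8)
The plan is to specialize the general Euler--Arnold equation \eqref{Thm:Euler-ArnoldEquation:Eq:Euler-Arnold} to the affine group $G$ by inserting the explicit bracket formula \eqref{Eq:PoissonBracketAffineLine}, and then to read off the resulting ordinary differential equation after a suitable change of variables that matches it to \eqref{Intro:Eq:ABmodel}.

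First I would record the consequences of our choices of inner product and inertia operator. The metric on $T_eG$ was taken to be the standard Euclidean one, and the operator $A:\mathbb{R}^2\to\mathbb{R}^2$ was chosen to be the identity matrix. Hence, under the identification of $T_eG$ with $T_e^*G$ induced by this inner product, the operator $A^{-1}$ is also the identity. Writing the angular momentum in the body as $M_c=\begin{pmatrix} m_1 & m_2\\ 0 & 0\end{pmatrix}\in T_e^*G$, it follows that $A^{-1}M_c\in\mathfrak{g}$ has the same components $(m_1,m_2)$.

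Next I would substitute $\xi=A^{-1}M_c$ and $\zeta=M_c$ into \eqref{Eq:PoissonBracketAffineLine}. Since both arguments carry the components $(m_1,m_2)$, the formula yields
$$\{A^{-1}M_c,M_c\}=\begin{pmatrix} -m_2^2 & m_1 m_2\\ 0 & 0\end{pmatrix} \ .$$
The Euler--Arnold equation \eqref{Thm:Euler-ArnoldEquation:Eq:Euler-Arnold} then reduces, upon matching entries, to the planar system
$$\dot m_1=-m_2^2 \ , \qquad \dot m_2=m_1 m_2 \ .$$

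Finally I would introduce the physical coordinates of \eqref{Intro:Eq:ABmodel} by setting $x=m_2$ and $y=-m_1$. Under this substitution one computes $\dot x=\dot m_2=m_1 m_2=-xy$ and $\dot y=-\dot m_1=m_2^2=x^2$, which is precisely \eqref{Intro:Eq:ABmodel}. The one step requiring genuine care is the bookkeeping of the dual pairing together with the sign conventions: one must respect that the first slot of $\{\cdot,\cdot\}$ in \eqref{Eq:PoissonBracketAffineLine} carries the $\mathfrak{g}$--vector $A^{-1}M_c$ while the second carries the $\mathfrak{g}^*$--covector $M_c$, and one must select the sign $y=-m_1$ rather than $y=m_1$ in order to reproduce the non--potential force $(-xy,x^2)$ rather than a sign--flipped variant. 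This choice of sign is the only place where the identification is not automatic, and it is dictated by matching the degeneracy of the vector field on $x=0$ discussed in Section \ref{Sec:Intro}.
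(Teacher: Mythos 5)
Your proposal is correct and is essentially identical to the paper's own proof: both substitute $\xi=A^{-1}M_c=M_c$ (since $A$ is the identity) and $\zeta=M_c$ into the bracket formula \eqref{Eq:PoissonBracketAffineLine}, obtain $\dot M_{c,1}=-M_{c,2}^2$, $\dot M_{c,2}=M_{c,1}M_{c,2}$, and recover \eqref{Intro:Eq:ABmodel} via the change of variables $x=M_{c,2}$, $y=-M_{c,1}$. Your added remark about the sign convention being the only non-automatic step is accurate and harmless.
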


\begin{proof}
Set $M_c(t)=(M_{c,1}(t), M_{c,2}(t))$, using \eqref{Eq:PoissonBracketAffineLine},
the Euler--Arnold equation \eqref{Thm:Euler-ArnoldEquation:Eq:Euler-Arnold}
in Theorem \ref{Thm:Euler-ArnoldEquation} is given by
$$(\dot{M}_{c,1}(t), \dot{M}_{c,2}(t))=\{M_c(t), M_c(t)\}=(-M_{c,2}^2(t), M_{c,1}(t)M_{c,2}(t)) \ .$$
Set $x(t)=M_{c,2}(t)$ and $y(t)=-M_{c,1}(t)$, then the above equation is
$$(-\dot{y}, \dot{x})=(-x^2, -xy) \ ,$$
which is the same as \eqref{Intro:Eq:ABmodel}.
\end{proof}

We have seen that our unperturbed system \eqref{Intro:Eq:ABmodel} is nothing but the Euler--Arnold equation
for the group $G$ of all affine transformations of a line $\ell$.

\section{Remarks and Generalizations.}\label{Sec:Remark}

1. Let us introduce the elliptic operator
\begin{equation}\label{Rmk:Eq:Operator:Elliptic:Leps}
L_\varepsilon=\dfrac{1}{\varepsilon}\left(-xy\dfrac{\partial}{\partial x}+x^2\dfrac{\partial}{\partial y}\right)-x\dfrac{\partial}{\partial x}-y\dfrac{\partial}{\partial y}
+\dfrac{1}{2}\dfrac{\partial^2}{\partial x^2}+\dfrac{1}{2}\dfrac{\partial^2}{\partial y^2} \ .
\end{equation}
The above elliptic operator can be written as
$$L_\varepsilon=\dfrac{1}{\varepsilon}L_0+L_1 \ ,$$
in which
\begin{equation}\label{Rmk:Eq:Operator:FirstOrder:L0}
L_0=-xy\dfrac{\partial}{\partial x}+x^2\dfrac{\partial}{\partial y} \ ,
\end{equation}
and
\begin{equation}\label{Rmk:Eq:Operator:Elliptic:L1}
L_1=-x\dfrac{\partial}{\partial x}-y\dfrac{\partial}{\partial y}
+\dfrac{1}{2}\dfrac{\partial^2}{\partial x^2}+\dfrac{1}{2}\dfrac{\partial^2}{\partial y^2} \ .
\end{equation}
In this way, the operator $L_0$ degenerates on $x=0$. One can consider a corresponding Cauchy problem
\begin{equation}\label{Rmk:Eq:CauchyProblem}
\dfrac{\partial u^\varepsilon}{\partial t}=L_\varepsilon u^\varepsilon \ , \ u^\varepsilon(0,x,y)=f(x,y) \ ,
\end{equation}
where $f(x,y)$ is a bounded continuous function in $(x,y)\in \mathbb{R}^2$. The solution is represented by
$$u^\varepsilon(t,x,y)=\mathbf{E}_{(x,y)}f(X^\varepsilon_t,Y^\varepsilon_t) \ .$$
By our Theorem \ref{Thm:MainTheorem}, we infer that
$\lim\limits_{\varepsilon\downarrow 0}\mathbf{E}_{(x,y)}f(X^\varepsilon_t,Y^\varepsilon_t)= \lim\limits_{\varepsilon\downarrow 0}\mathbf{E}_{(x,y)}f(0,Y^\varepsilon_t)=\mathbf{E}_{(0,y^\pi(x,y))}f(0,Y_t)$. This gives the following

\begin{corollary}\label{Rmk:Cor:CauchyProblemConvergence}
Let the initial condition $f(x,y)$ be a bounded continuous function of $(x,y)$. Then as $\varepsilon\rightarrow 0$ we have
$u^\varepsilon(t,x,y)\rightarrow u(t,y^\pi(x,y))$ where $u(t,y)$ is the solution of the equation
\begin{equation}\label{Rmk:Cor:CauchyProblemConvergence:Eq:LimitingODE}
\dfrac{\partial u}{\partial t}=\left(\dfrac{1}{2y}-y\right)\dfrac{\partial u}{\partial y}+\dfrac{1}{2}\dfrac{\partial^2 u}{\partial y^2} \ , \ u(0,y)=f(0,y) \text{ for } y\geq 0
\ , \ \dfrac{\partial u}{\partial y}(0+)=0 \ .
\end{equation}
\end{corollary}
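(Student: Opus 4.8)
The plan is to read the convergence $u^\varepsilon(t,x,y)\to u(t,y^\pi(x,y))$ as an essentially immediate consequence of the Feynman--Kac representation together with the two assertions of Theorem \ref{Thm:MainTheorem}. Starting from $u^\varepsilon(t,x,y)=\mathbf{E}_{(x,y)}f(X_t^\varepsilon,Y_t^\varepsilon)$, I would first split off the $X$--dependence by writing
$$
u^\varepsilon(t,x,y)=\mathbf{E}_{(x,y)}f(0,Y_t^\varepsilon)
+\mathbf{E}_{(x,y)}\left[f(X_t^\varepsilon,Y_t^\varepsilon)-f(0,Y_t^\varepsilon)\right] \ ,
$$
so that the proof reduces to showing that the bracketed difference tends to $0$ and that the first term converges to $\mathbf{E}_{(0,y^\pi(x,y))}f(0,Y_t)$.

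For the difference term, part (a) of Theorem \ref{Thm:MainTheorem} gives the conclusion at once when $f$ is Lipschitz. To cover a merely bounded continuous $f$ I would add a localization step: by the tightness of the family $\{Y_t^\varepsilon\}$ (Lemma \ref{Lm:TightnessYProcess}) pick $R>0$ with $\mathbf{P}(|Y_t^\varepsilon|>R)<\eta$ uniformly in small $\varepsilon$, and recall from Lemmas \ref{Lm:HitClosenessToZeroX:OutsideDelta} and \ref{Lm:HitBackToZeroX:AbsXBiggerThanHalfKappa} that $X_t^\varepsilon\to 0$ in probability. On the compact rectangle $[-1,1]\times[-R,R]$ the function $f$ is uniformly continuous, hence $f(X_t^\varepsilon,Y_t^\varepsilon)-f(0,Y_t^\varepsilon)\to 0$ in probability; the boundedness of $f$ and dominated convergence then send the difference term to $0$. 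For the first term, $g(\cdot):=f(0,\cdot)$ is bounded and continuous, so $\omega\mapsto g(\omega(t))$ is a bounded continuous functional on $\mathbf{C}_{[0,T]}(\mathbb{R})$, and part (b) of Theorem \ref{Thm:MainTheorem} yields $\mathbf{E}_{(x,y)}f(0,Y_t^\varepsilon)\to\mathbf{E}_{(0,y^\pi(x,y))}f(0,Y_t)$.

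It remains to identify the limit $u(t,y):=\mathbf{E}_{y}f(0,Y_t)$ with the solution of \eqref{Rmk:Cor:CauchyProblemConvergence:Eq:LimitingODE}. Here I would invoke the Hille--Yosida construction of $Y_t$ recorded just before Theorem \ref{Thm:MainTheorem}: $Y_t$ is the Markov process generated by the closure of $A|_{D(A)}$, so that $u(t,y)=(P_tf(0,\cdot))(y)$ for the corresponding semigroup $P_t$. For $t>0$ the smoothing of the Bessel--type transition kernel makes $u(t,\cdot)$ twice continuously differentiable on $\{y\ge 0\}$, and the requirement $u(t,\cdot)\in D(A)$ forces the boundary condition $\partial_y u(0+)=0$ coming from \eqref{Eq:LimitingGenerator:Domain}; writing $A$ out through \eqref{Eq:LimitingGenerator:Bessel:OyA} turns the backward Kolmogorov equation $\partial_t u=Au$ into precisely \eqref{Rmk:Cor:CauchyProblemConvergence:Eq:LimitingODE}, with initial datum $u(0,y)=f(0,y)$ recovered from the right--continuity of $P_t$. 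Uniqueness of the bounded solution of this boundary value problem then identifies the probabilistic $u$ with the PDE solution in the statement, and evaluating at $y=y^\pi(x,y)$ finishes the argument.

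The main obstacle I anticipate is the passage from Lipschitz to merely bounded continuous test functions in the difference term, since part (a) of Theorem \ref{Thm:MainTheorem} is phrased only for uniformly Lipschitz $F$; the tightness of $Y_t^\varepsilon$ together with the in--probability convergence $X_t^\varepsilon\to 0$ are exactly what localize the problem to a compact set on which $f$ is uniformly continuous. A secondary point requiring care is that the boundary condition $\partial_y u(0+)=0$ must be seen to arise from the inaccessibility of $O$ for $Y_t$ (the entrance character of the origin encoded in $D(A)$) rather than being imposed externally; this, however, is already built into the definition of the operator $A$ and its domain.
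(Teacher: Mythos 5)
Your proposal is correct and follows essentially the same route as the paper, which proves the corollary in one line via the representation $u^\varepsilon(t,x,y)=\mathbf{E}_{(x,y)}f(X_t^\varepsilon,Y_t^\varepsilon)$, part (a) of Theorem \ref{Thm:MainTheorem} to replace $f(X_t^\varepsilon,Y_t^\varepsilon)$ by $f(0,Y_t^\varepsilon)$, and part (b) to pass to $\mathbf{E}_{(0,y^\pi(x,y))}f(0,Y_t)$. In fact your write-up is more careful than the paper's: the localization argument (tightness of $Y_t^\varepsilon$ plus $X_t^\varepsilon\to 0$ in probability, then uniform continuity on a compact rectangle) bridging from Lipschitz test functions in part (a) to merely bounded continuous $f$, and the explicit identification of $\mathbf{E}_y f(0,Y_t)$ with the solution of \eqref{Rmk:Cor:CauchyProblemConvergence:Eq:LimitingODE} through the semigroup and the domain condition \eqref{Eq:LimitingGenerator:Domain}, are both steps the paper leaves implicit.
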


\

\begin{figure}[H]
\centering
\includegraphics[height=8cm, width=8cm, bb=0 0 353 361]{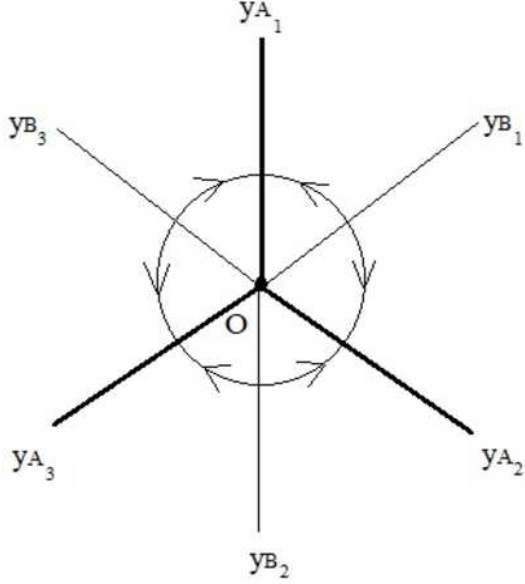}
\caption{A more general problem.}
\label{Fig:GeneralProblem}
\end{figure}

2. One can consider a more general system such as the one shown in Figure \ref{Fig:GeneralProblem}. Here the $3$ axes $Oy_{A_1}$, $Oy_{A_2}$ and $Oy_{A_3}$
consist of stable equilibriums and the other $3$ axes
$Oy_{B_1}$, $Oy_{B_2}$, $Oy_{B_3}$ consist of unstable equilibriums. One can analyze this system in a similar fashion
as we did in this work, so that we expect to see the limiting process as a diffusion process on a tree $\Gamma$ (see \cite{[FW-DiffusionGraph]}).
The tree $\Gamma=Oy_{A_1}\cup Oy_{A_2}\cup Oy_{A_3}$ consists of $3$ edges that are the semi--axes $Oy_{A_1}$, $Oy_{A_2}$, $Oy_{A_3}$. On each edge the limiting process is a Bessel--like process and the interior vertex $O$ is inaccessible. The proof of these facts follows from
the method we adopted in this paper as well as the techniques used in \cite[Chapter 8]{[FWbook]}, \cite{[FW-AMS]}, \cite{[FW-DiffusionGraph]}.
One can first obtain ``localization" type of results as we showed in Lemmas \ref{Lm:HitBackToZeroX:AbsXBiggerThanHalfKappa}, \ref{Lm:HitBackToZeroX:YLessThanMinusDelta}. With such localization results at hand, we then show that the process localized onto $\Gamma$
converges weakly to a diffusion process on the graph $\Gamma$, similarly as we did in the current work.

\

3. If the system \eqref{Eq:ABmodelPerturbedTimeRescaled} do not have the dissipative terms, so that it looks like
\begin{equation}\label{Eq:ABmodelPerturbedTimeRescaledNoDissipation}
\left\{\begin{array}{lll}
d\mathrm{X}_t^\varepsilon=-\dfrac{1}{\varepsilon}\mathrm{X}_t^\varepsilon \mathrm{Y}_t^\varepsilon dt+dW_t^1 & , & \mathrm{X}_0^\varepsilon=x_0 \ ,
\\
d\mathrm{Y}_t^\varepsilon=\dfrac{1}{\varepsilon}(\mathrm{X}_t^\varepsilon)^2dt+dW_t^2 & , & \mathrm{Y}_0^\varepsilon=y_0 \ .
\end{array}\right.
\end{equation}
Then the argument of the Lemmas \ref{Lm:HitClosenessToZeroX:OutsideDelta}--\ref{Lm:TightnessYProcess} and the proof of
Theorem \ref{Thm:MainTheorem} still go through, with
minor changes in the estimates. The limiting $Y$--process will be a process of the form
\begin{equation}\label{Eq:ABmodelPerturbedTimeRescaledNoDissipationLimitYProcess}
d\mathrm{Y}_t=\dfrac{1}{\mathrm{Y}_t}dt+dW_t^2 \ , \ \mathrm{Y}_0=y^\pi(x_0,y_0) \ .
\end{equation}
In particular, this implies that the $\mathrm{Y}_t$ process keeps growing in the direction $Oy_A$. That is to say,
the energy grows in the direction of the stable manifold $Oy_A$. Geometrically, this phenomenon
comes from the fact that the energy constraint given by the conservative flow $b(x,y)=(-xy, x^2)$
provides a positive force around the stable line $Oy_A$. Thus the energy can keep growing at $Oy_A$
due to the random noise. Such a geometric phenomenon might be related
to some problems in $2$--d turbulence (see \cite{[EHSPartialDamping]}).

\

\

\textbf{Acknowledgement.} The author would like to thank Professor
Vladim\'{i}r \v{S}ver\'{a}k from University of Minnesota, USA for fruitful discussions on the formulation of his system \eqref{Intro:Eq:ABmodel}
as the Euler--Arnold equation on the group of all
affine transformations of a line, as well as its relation with fluid mechanics.
He also would like to thank the anonymous referee, Professor Yong Liu from Peking University, Beijing, China
and Professor Yong Ren from Anhui Normal University, Wuhu, Anhui, China for their valuable comments that improve the
first version of this work.

\bibliographystyle{plain}
\bibliography{bibliography}

\begin{thebibliography}{10}

\bibitem{[Arnold1966]}
V.I. Arnold.
\newblock Sur la g\'{e}om\'{e}trie diff\'{e}rentielle des groups de lie de
  dimension infinite et ses applications \`{a} l'hydrodynamique des fluids
  parfaits.
\newblock {\em Ann. Inst. Fourier}, 16:316--361, 1966.

\bibitem{[Arnold]}
V.I. Arnold.
\newblock {\em Mathematical methods of classical mechanics}.
\newblock Springer, 1978.

\bibitem{[Arnold-Khesin]}
V.I. Arnold and B.~Khesin.
\newblock {\em Topological methods in hydrodynamics}.
\newblock Springer, 1998.

\bibitem{[BerglundKramers]}
N.~Berglund.
\newblock Kramers' law: {V}alidity, derivations and generalizations.
\newblock {\em Markov Processes and Related Fields}, 19:459--490, 2013.

\bibitem{[2dTurbulence-Theory-Morita]}
F.~Bouchet and H.~Morita.
\newblock Large--time behavior and asymptotic stability of the $2${D} {E}uler
  and linerized {E}uler equations.
\newblock {\em Physica D}, 239:948--966, 2010.

\bibitem{[2dTurbulence-StatMech-Bouchet]}
F.~Bouchet and J.~Sommeria.
\newblock Emergence of intense jets and {J}upiter's {G}reat {R}ed {S}pot as
  maximum--entropy structures.
\newblock {\em Journal of Fluid Mechanics}, 464:165--207, 2002.

\bibitem{[BT]}
F.~Bouchet and H.~Touchette.
\newblock Non--classical large deviations for a noisy system with non--isolated
  attractors.
\newblock {\em Journal of Statistical Mechanics}, May 2012.

\bibitem{[2dTurbulence-StatMech-Venaille]}
F.~Bouchet and A.~Venallie.
\newblock Statistical mechanics of two--dimensional and geophysical flows.
\newblock {\em Physics Reports}, 515:227--295, 2012.

\bibitem{[DK-AveragingErgodic-AOP]}
D.~Dolgopyat and L.~Koralov.
\newblock Averaging of {H}amiltonian flows with an ergodic component.
\newblock {\em Annals of Probability}, 36:1999--2049, 2008.

\bibitem{[DK-AveragingErgodic-JAMS]}
D.~Dolgopyat and L.~Koralov.
\newblock Averaging of incompressible flows on two dimensional surfaces.
\newblock {\em Journal of American Mathematical Society}, 26(2):427--449, 2013.

\bibitem{[Dynkin1959]}
E.B. Dynkin.
\newblock One--dimensional continuous strong {M}arkov processes.
\newblock {\em Theory of Probability and Its Applications}, IV(1):1--52, 1959.

\bibitem{[EHSPartialDamping]}
T.~Elgindi, W.~Hu, and V.~\v{S}ver\'{a}k.
\newblock On 2d incompressible {E}uler equations with partial damping.
\newblock {\em Communications in Mathematical Physics}, 355(1):145--159,
  October 2017.

\bibitem{[Ethier-Kurtz]}
S.N. Ethier and T.G. Kurtz.
\newblock {\em Markov processes, characterization and convergence}.
\newblock John Wiley $\&$ Sons, 2005.

\bibitem{[Feller1957]}
W.~Feller.
\newblock Generalized second-order differential operators and their lateral
  conditions.
\newblock {\em Illinois Journal of Mathematics}, 1:459--504, 1957.

\bibitem{[FreidlinSublimitingDistribution]}
M.~Freidlin.
\newblock Sublimiting {D}istributions and {S}tabilization of {S}olutions of
  {P}arabolic {E}quations with a {S}mall {P}arameter.
\newblock {\em {S}oviet {M}ath {D}oklady}, 235(5):1042--1045, 1977.

\bibitem{[Freidlin2014JSPpaper]}
M.~Freidlin.
\newblock On stochastic perturbations of dynamical systems with a ``rough"
  symmetry: {H}ierarchy of {M}arkov chains.
\newblock {\em Journal of Statistical Physics}, 157(6):1031--1045, December
  2014.

\bibitem{[Freidlin-HuLandau-Lifschitz]}
M.~Freidlin and W.~Hu.
\newblock On perturbations of the generalized {L}andau--{L}ifschitz dynamics.
\newblock {\em Journal of Statistical Physics}, 144:978--1008, 2011.

\bibitem{[Freidlin-Hu-NearlyElastic]}
M.~Freidlin and W.~Hu.
\newblock On stochasticity in {N}ealy--{E}lastic {S}ystsms.
\newblock {\em Stochastics and Dynamics}, 12(3), 2012.

\bibitem{[Freidlin-Hu-CPDE]}
M.~Freidlin and W.~Hu.
\newblock On second order elliptic equations with a small parameter.
\newblock {\em Communications in Partial Differential Equations},
  38(10):1712--1736, 2013.

\bibitem{[Freidlin-Hu-Wentzell-SPA]}
M.~Freidlin, W.~Hu, and A.~Wentzell.
\newblock Small mass asymptotic for the motion with vanishing friction.
\newblock {\em Stochastic Processes and their Applications}, 123:45--75, 2013.

\bibitem{[FKraretransition]}
M.~Freidlin and L.~Koralov.
\newblock Metastable distributions of markov chains with rare transitions.
\newblock {\em Journal of Statistical Physics}, 167(6):1355--1375, June 2017.

\bibitem{[Freidlin-Koralov-WentzellPockets]}
M.~Freidlin, L.~Koralov, and A.~Wentzell.
\newblock On diffusions in media with pockets of large diffusivity.
\newblock {\em \verb"arXiv:1710.03555v1[math.PR]"}.

\bibitem{[FKWtrap]}
M.~Freidlin, L.~Koralov, and A.~Wentzell.
\newblock On the behavior of diffusion processes with traps.
\newblock {\em Annals of Probability}, 45(5):3202--3222, 2017.

\bibitem{[FKSlowlyChangingDynamics]}
M.~Freidlin and L.~Korlaov.
\newblock On stochastic perturbations of slowly changing dynamical systems.
\newblock {\em Nonlinearity}, 30(1), December 2016.

\bibitem{[FW1969]}
M.~Freidlin and A.~Wentzell.
\newblock On small random perturbations of dynamical systems.
\newblock {\em Russian Mathematical Surveys}, 25(1):1--56, 1970.

\bibitem{[FW-DiffusionGraph]}
M.~Freidlin and A.~Wentzell.
\newblock Diffusion processes on graphs and the averaging principle.
\newblock {\em Annals of Probability}, 21(4):2215--2245, 1993.

\bibitem{[FW-AMS]}
M.~Freidlin and A.~Wentzell.
\newblock Random {P}erturbations of {H}amiltonian systems.
\newblock {\em Memoirs of the AMS}, 1994.

\bibitem{[FWbook]}
M.~Freidlin and A.~Wentzell.
\newblock {\em Random {P}erturbations of {D}ynamical {S}ystems}.
\newblock Springer, 2nd edition, 1998.

\bibitem{[FWfishpaper]}
M.~Freidlin and A.~Wentzell.
\newblock On the {N}eumann problem for {PDE}'s with a small parameter and the
  corresponding diffusion processes.
\newblock {\em Probability Theory and Related Fields}, 152(1--2):101--140,
  2012.

\bibitem{[FWbook2012]}
M.~Freidlin and A.~Wentzell.
\newblock {\em Random {P}erturbations of {D}ynamical {S}ystems}.
\newblock Springer, 3rd edition, 2012.

\bibitem{[Hu-Metastability-Nearly-Elastic]}
W.~Hu.
\newblock On metastability in nearly-elastic systems.
\newblock {\em Asymptotic Analysis}, 79(1-2), 2012.

\bibitem{[Hu-Sverak]}
W.~Hu and V.~\v{S}ver\'{a}k.
\newblock Dynamics of geodesic flows with random forcing on lie groups with
  left--invariant metrics.
\newblock {\em Journal of Nonlinear Science}, online first, January 25, 2018.

\bibitem{[Kuksin-Shirikyan2017Review]}
S.~Kuksin and A.~Shirikyan.
\newblock Rigorous results in space--periodic two--dimensional turbulence.
\newblock {\em Physics of Fluids}, 29:125106, 2017.

\bibitem{[Mandl]}
P.~Mandl.
\newblock {\em Analytical {T}reatment of {O}ne--dimensional {M}arkov
  {P}rocesses}.
\newblock Springer, Berlin, 1968.

\bibitem{[Martiosyan2017CPAM]}
D.~Martiosyan.
\newblock Large deviations for stationary measures of stochastic non--linear
  wave equations with smooth white noise.
\newblock {\em Communications in Pure and Applied Mathematics}, to appear,
  2017.

\bibitem{[2dTurbulence-StatMech-Miller]}
J.~Miller.
\newblock Statistical mechanis of {E}uler equations in two--dimensions.
\newblock {\em Physical Review Letters}, 65:2137--2140, 1990.

\bibitem{[Molchanov]}
S.A. Molchanov.
\newblock Martin boundary for invariant markov processes on a solvable group.
  (english translation).
\newblock {\em Theory of Probability and its Applications}, 12:310--314, 1967.

\bibitem{[2dTurbulence-Theory-MouhotVillani]}
C.~Mouhot and C.~Villani.
\newblock On {L}andau damping.
\newblock {\em Acta Mathematica}, 207:29--201, 2011.

\bibitem{[Revuz-Yor]}
D.~Revuz and M.~Yor.
\newblock {\em Continuous Martingales and Brownian motion, Third Edition}.
\newblock Springer, 1999.

\bibitem{[2dTurbulence-StatMech-Robert]}
R.~Robert and J.~Sommeria.
\newblock Statistical equilibrium states for two--dimensional flows.
\newblock {\em Journal of Fluid Mechanics}, 229:291--310, 1991.

\bibitem{[2dTurbulence-Numerical-Farge]}
K.~Schneider and M.~Farge.
\newblock Final states of decaying 2--d turbulence in bounded domains:
  influence of the geometry.
\newblock {\em Physica D}, 237:2228--2233, 2008.

\bibitem{[2dTurbulence-Experiments-Sommeria]}
J.~Sommeria.
\newblock Two dimendional turbulence.
\newblock {\em {N}ew {T}rends in {T}urbulence, Les Houches Summer School, New
  York Springer}, 74:385--447, 2001.

\bibitem{[2dTurbulence-Experiments-Tabling]}
P.~Tabling.
\newblock Two--dimensional turbulence, a physicist approach.
\newblock {\em Physics Reports}, 362(1):1--62, 2002.

\bibitem{[TaoBlog]}
T.~Tao.
\newblock The {E}uler--{A}rnold equation.
\newblock {\em available at \\
  \verb"https://terrytao.wordpress.com/2010/06/07/the-euler-arnold-equation/"}.

\bibitem{[SverakNotes2011-2012]}
V.~\v{S}ver\'{a}k.
\newblock {\em Lecture notes of \textit{Selected Topics in Fluid Mechanics}}.
\newblock University of Minnesota, 2011--2012.

\bibitem{[LorentzAttractor]}
R.F. Willams.
\newblock The structure of {L}orentz attractors.
\newblock {\em Publications {M}ath\'{e}matiques de l'{I}.{H}.\'{E}.S}, tome
  50:73--99, 1979.

\end{thebibliography}

\end{document}